\theoremstyle{plain}
\newtheorem{thm}{Theorem}[section]
\newtheorem*{thm*}{Theorem}
\newtheorem{lem}[thm]{Lemma}
\newtheorem{cor}[thm]{Corollary}
\newtheorem*{cor*}{Corollary}
\newtheorem{prop}[thm]{Proposition}
\theoremstyle{remark}
\theoremstyle{definition}
\newtheorem{defin}[thm]{Definition}
\newtheorem{notation}[thm]{Notation}
\newcommand{\ZZ}{\mathbb{Z}}
\newcommand{\Zp}{\mathbb{Z}_p}
\newcommand{\Qp}{\mathbb{Q}_p}
\newcommand{\Cp}{\mathbb{C}_p}
\newcommand{\CC}{\mathbb{C}}
\newcommand{\QQ}{\mathbb{Q}}
\newcommand{\lcm}{\mathrm{lcm}}
\newcommand{\righteq}{\stackrel{\sim}{\rightarrow}}
\newcommand{\Ocal}{\mathcal{O}}
\newcommand{\dn}[1]{d_{#1}}
   \def\MR#1{}
\title[Linear independence result for p-adic L-values]{A Linear independence result for $p$-adic $L$-values}
\author{Johannes Sprang}
\thanks{The author has been supported by the SFB 1085 ``Higher Invariants'' funded by the DFG}
\address{Fakult\"at f\"ur Mathematik Universit\"at Regensburg  \\ 93040 Regensburg }
\email{johannes.sprang@mathematik.uni-regensburg.de }
\date{}
\begin{document}
\begin{abstract}
The aim of this paper is to provide an analogue of the Ball--Rivoal theorem for $p$-adic $L$-values of Dirichlet characters. More precisely, we prove for a Dirichlet character $\chi$ and a number field $K$ the formula $\dim_{K}\left(K+\sum_{i=2}^{s+1} L_p(i,\chi\omega^{1-i}) K \right)\geq \frac{(1-\epsilon)\log (s)}{2[K:\QQ](1+\log 2)}$. As a byproduct, we establish an asymptotic linear independence result for the values of the $p$-adic Hurwitz zeta function.
\end{abstract}

\maketitle

\section{Introduction}
The values of the Riemann zeta function at even positive integers are non-zero rational multiples of powers of $\pi$ and thereby transcendental. The question about the structure of the odd positive zeta values is much more difficult and has not yet been sufficiently clarified. The first result in this direction was the proof of the irrationality of $\zeta(3)$ given by Ap\'ery in \cite{apery}. While this is still the only particular odd zeta value known to be irrational, the celebrated theorem of Ball and Rivoal gives an asymptotic lower bound for the dimension of the $\QQ$-vector space spanned by the first odd zeta values between $3$ and $s$:
\begin{thm*}[{Rivoal, Ball--Rivoal, \cite{rivoal,ball_rivoal}}]
For any $\epsilon>0$, there exists an integer $s_0$ such that for all odd integers $s\geq s_0$:
\[
	\dim_{\QQ}\left(\QQ+ \zeta(3)\QQ+\zeta(5)\QQ+\dots+\zeta(s)\QQ \right)\geq \frac{\log(s)(1-\epsilon)}{1+\log(2)}.
\]
\end{thm*}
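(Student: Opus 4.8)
The plan is to produce, for each large odd integer $s$, an explicit sequence of small $\QQ$-linear combinations of $1$ and the odd zeta values $\zeta(3),\zeta(5),\dots,\zeta(s)$, and to feed them into Nesterenko's linear independence criterion. Fix an auxiliary integer $r$ with $2r<s$ and consider the \emph{very well-poised} rational function together with its associated series
\[
  R_n(t)=n!^{\,s-2r}\,(2t+n)\,\frac{\prod_{j=1}^{rn}(t-j)\;\prod_{j=1}^{rn}(t+n+j)}{\prod_{j=0}^{n}(t+j)^{s}},
  \qquad
  S_n=\sum_{k=1}^{\infty}R_n(k),
\]
the series converging because $2r<s$ makes the denominator of $R_n$ dominate. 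Expanding $R_n(t)=\sum_{i=1}^{s}\sum_{j=0}^{n}c_{i,j}(t+j)^{-i}$ into partial fractions and summing termwise via $\sum_{k\ge 1}(k+j)^{-i}=\zeta(i)-\sum_{\ell=1}^{j}\ell^{-i}$ expresses $S_n$ as a $\QQ$-linear combination of $1,\zeta(2),\zeta(3),\dots,\zeta(s)$. The decisive structural feature is that, for $n$ odd, $R_n$ is anti-invariant under $t\mapsto -n-t$: its zeros and poles are symmetric about $-n/2$, and the factor $2t+n$ vanishes at that centre. This forces $c_{i,n-j}=-(-1)^{i}c_{i,j}$, so that the coefficient $\sum_{j}c_{i,j}$ of $\zeta(i)$ vanishes for every even $i$, while $\sum_j c_{1,j}=0$ by convergence; hence, along odd $n$,
\[
  S_n=\rho_{0,n}+\sum_{\substack{i=3\\ i\ \mathrm{odd}}}^{s}\rho_{i,n}\,\zeta(i),\qquad \rho_{i,n}\in\QQ .
\]

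Two quantitative inputs are then needed. On the arithmetic side one shows that, after multiplication by a suitable $\lcm$-power $D_n$ of size $e^{\,\kappa(s,r)\,n(1+o(1))}$ (by the prime number theorem), the $\rho_{i,n}$ become integers; the well-poisedness of $R_n$ moreover permits a power of $2$ to be extracted from the numerator of each $\rho_{i,n}$ --- a refinement due to Ball which sharpens $\kappa$ --- and, together with elementary binomial bounds for the partial-fraction coefficients $c_{i,j}$, this shows that the integer linear form $L_n:=D_n S_n$ has all of its coefficients bounded by $e^{\,\tau(s,r)\,n(1+o(1))}$ for an explicit exponent $\tau(s,r)$. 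On the analytic side, the Laplace/saddle-point method applied to $S_n$ --- directly on the series, or after recasting $S_n$ as a Mellin--Barnes contour integral --- yields, for $r$ in an appropriate range, that $|L_n|=e^{-\sigma(s,r)\,n(1+o(1))}$ for an explicit $\sigma(s,r)>0$; in particular $S_n\neq 0$ for all large $n$ and $L_n\to 0$ at a genuine exponential rate, not merely with an upper bound --- precisely what the criterion requires.

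Nesterenko's linear independence criterion then converts these estimates into a lower bound for $\dim_{\QQ}\!\big(\QQ+\zeta(3)\QQ+\dots+\zeta(s)\QQ\big)$ expressed in terms of the ratio $\sigma(s,r)/\tau(s,r)$ (with the $\vartheta_i$ taken to be $\zeta(3),\zeta(5),\dots,\zeta(s)$). It remains to optimise the free parameter $r$ and to let $s\to\infty$: carefully tracking the contributions of the normalisation $n!^{\,s-2r}$, of the $\lcm$-power $D_n$, of the extracted power of $2$, and of the saddle value, one checks that the optimised lower bound tends to $\dfrac{\log s}{1+\log 2}$, and hence exceeds $\dfrac{(1-\epsilon)\log s}{1+\log 2}$ once $s$ is large enough, which is the assertion.

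I expect the main obstacle to be the arithmetic input: pinning down the denominators of the $\rho_{i,n}$ --- and in particular extracting the extra power of $2$ afforded by well-poisedness (Ball's lemma) --- requires a careful prime-by-prime analysis of the partial-fraction coefficients. The uniform-in-$s$ saddle-point estimate is also delicate, since all constants must be made explicit, the non-vanishing of $S_n$ must be verified, and the genuine two-sided exponential decay rate must be exhibited; and the concluding optimisation, which has to reproduce precisely the constant $1/(1+\log 2)$, is where these two estimates must balance exactly.
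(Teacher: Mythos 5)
The paper does not prove this statement: it is quoted verbatim from Rivoal and Ball--Rivoal as background motivation, so there is no in-paper proof to compare against. Your sketch is a faithful outline of the actual proof in those references: the very well-poised series $S_n=\sum_k R_n(k)$ with the factor $2t+n$, the partial-fraction expansion into a linear form in $1,\zeta(2),\dots,\zeta(s)$, the anti-invariance under $t\mapsto -n-t$ (which, as you say, forces $c_{i,n-j}=-(-1)^i c_{i,j}$ and hence kills every even $\zeta(i)$, with $\sum_j c_{1,j}=0$ coming from the degree condition), the $d_n$-power denominator lemma, the saddle-point asymptotics for $|S_n|$, and Nesterenko's criterion with an optimisation over $r$. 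This is in fact precisely the Archimedean template that the present paper transposes to the $p$-adic setting (summation over $k$ is replaced by Volkenborn integration, the symmetry trick by the binomial factor $\binom{Dt+N(n)}{N(n)}^Q$, and Nesterenko's criterion by its $p$-adic variant, \Cref{thm_nesterenko}), which explains the analogous constant $1+\log 2$. Be aware, though, that as written your text is a correct \emph{plan} rather than a proof: the three decisive quantitative inputs --- the arithmetic lemma bounding the denominators of the $\rho_{i,n}$ (including the power of $2$ extracted via well-poisedness), the two-sided exponential estimate $|L_n|=e^{-\sigma(s,r)n(1+o(1))}$ with its non-vanishing, and the final optimisation in $r$ that produces exactly $\log s/(1+\log 2)$ --- are asserted but not carried out, and they constitute essentially all of the difficulty of the Ball--Rivoal argument.
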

The lower bound on the dimension has not been significantly improved since then. Nevertheless, if one is only interested in the \emph{irrationality} of odd zeta values and not in 		their \emph{linear independence}, the lower bound can be further improved.
\begin{thm*}[{Fischler--Sprang--Zudilin, \cite{FSZ}}]
For any $\epsilon>0$, there exists an integer $s_0$ such that for all odd integers $s\geq s_0$ at least
\[
	2^{(1-\epsilon)\frac{\log(s)}{\log\log(s)}}
\]
among the numbers
\[
	\zeta(3),\zeta(5),\dots,\zeta(s)
\]
are irrational.
\end{thm*}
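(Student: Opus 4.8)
The plan is to upgrade the Ball--Rivoal construction recalled above from one sequence of linear forms to a whole family, indexed by the squarefree integers built out of small primes, so that the exponential count $2^{(1-\epsilon)\log s/\log\log s}$ appears as the number of admissible indices. Fix $\epsilon>0$, put $y=(1-\epsilon)\log s$, and let $m$ range over the squarefree $y$-smooth integers; by the prime number theorem there are $2^{\pi(y)}=2^{(1-\epsilon)(1+o(1))\log s/\log\log s}$ of these, and each satisfies $m\le\prod_{p\le y}p=s^{(1-\epsilon)(1+o(1))}<s$ once $s$ is large. Recall that the Ball--Rivoal hypergeometric series with parameter $a=s$ and a suitable auxiliary integer $r$ produces
\[
 \ell_n=c_{0,n}+\sum_{\substack{3\le k\le s\\ k\text{ odd}}}c_{k,n}\,\zeta(k),\qquad |\ell_n|^{1/n}\to\alpha^{-1}<1,\quad |c_{k,n}|^{1/n}\to\beta,
\]
with $c_{k,n}\in\QQ$ whose denominators divide fixed powers of $d_n=\lcm(1,\dots,n)$. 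I would rerun this construction after a modification indexed by $m$ --- forcing divisibility by $m$ in suitable Pochhammer factors and refining the ``arithmetic of the coefficients'' in the spirit of Rhin--Viola and Zudilin --- so arranged that $\ell_n^{(m)}$ is again an exponentially small $\QQ$-combination of $1$ and the odd $\zeta(k)$, $k\le s$, but is now governed by a balance inequality improved by an amount increasing with $m$, and in such a way that the surviving form still ``sees'' a zeta value $\zeta(k_m)$ at an argument $k_m\le s$ attached to $m$. The improvement would be verified prime by prime via Legendre's formula for the $p$-adic valuation of Pochhammer symbols, with $\log d_n\sim n$ and $\log m\le\vartheta(y)\sim y$ supplied by the prime number theorem.

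\emph{Elimination, then contradiction.} Suppose for contradiction that among $\zeta(3),\dots,\zeta(s)$ only the $N$ values $\zeta(k)$ with $k$ in a set $E$ are irrational, so that every form above lies in the $(N+1)$-dimensional space $V=\QQ\oplus\bigoplus_{k\in E}\QQ\,\zeta(k)$. Multiplying $\ell_n^{(m)}$ by a fixed common denominator of the finitely many rational values $\zeta(k)$, $k\notin E$, and by the appropriate power of $d_n$, one obtains
\[
 L_n^{(m)}=b_{0,n}^{(m)}+\sum_{k\in E}b_{k,n}^{(m)}\,\zeta(k)\ \in\ \ZZ+\sum_{k\in E}\ZZ\,\zeta(k),
\]
still exponentially small in $n$. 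Feeding the sequences $\bigl(L_n^{(m)}\bigr)_n$ into a linear independence criterion of Nesterenko type --- or, since only irrationality is sought, into the elementary bound ``a nonzero rational with denominator $Q$ has absolute value $\ge 1/Q$'' together with the usual rank argument --- yields for each $m$ the customary dichotomy: the forms either detect a genuinely new relation inside $V$, or they degenerate. The point of the $m$-twist is that the improved balance inequality forces the degenerate case to occur only when the activated value $\zeta(k_m)$ already lies in $E$. If the modification is chosen so that $m\mapsto k_m$ is injective (or boundedly-to-one) on the admissible $m<s$ --- which is possible, since there are only $s^{o(1)}$ such $m$ and roughly $s/2$ available arguments --- we conclude $N\ge(1-o(1))\,2^{\pi(y)}$, contradicting that $N$ is small; letting $\epsilon\to 0$ gives the stated lower bound.

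\emph{Main obstacle.} The crux is the construction promised in the first paragraph: one must produce a single hypergeometric template in which the $m$-modification simultaneously (i) keeps the linear form exponentially small with an explicit decay rate, (ii) improves the governing balance inequality by an amount genuinely increasing with $m$ --- enough that the associated rational approximation beats its own denominator --- and (iii) leaves the coefficient of a ``new'' odd zeta value $\zeta(k_m)$ nonzero, so that the degeneration in the elimination step is informative. Achieving (i)--(iii) simultaneously forces one to choose the auxiliary integer $r$ and the degree $n$ as functions of $s$ and $m$ so that the improved balance inequality still holds with a positive margin, and optimizing this margin against the sizes of $\log m$ and $\log d_n$ is exactly where the constant $1-\epsilon$ in the exponent is won or lost. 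By comparison, the remaining ingredients --- the count of squarefree smooth integers and the prime-number-theorem estimates --- are routine.
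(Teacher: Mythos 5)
The statement you are proving is not established in this paper at all: it is quoted in the introduction as background and attributed to \cite{FSZ}, so there is no internal proof to compare against. Judged on its own terms, your proposal is a plan whose decisive step is missing. Everything you label the ``main obstacle'' --- producing, for each smooth index $m$, a modified hypergeometric construction satisfying (i)--(iii) simultaneously, with an injective assignment $m\mapsto k_m$ of zeta arguments --- is precisely the content of the theorem, and you give no construction, no balance inequality, and no mechanism by which the degenerate case of the elimination would be forced to ``activate'' a specific $\zeta(k_m)$. There is no known way to attach a distinguished odd argument $k_m\le s$ to each squarefree $y$-smooth $m$ so that degeneration of the $m$-th form certifies irrationality of $\zeta(k_m)$; asserting that this ``is possible, since there are only $s^{o(1)}$ such $m$'' is a counting remark, not an argument.

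What you do get right is the combinatorics: the count $2^{(1-\epsilon)\log s/\log\log s}$ does arise as $2^{\pi(y)}$ with $y\sim(1-\epsilon)\log s$, i.e.\ as the number of divisors of $D=\prod_{p\le y}p\le s^{1-\epsilon+o(1)}$. But the actual mechanism in \cite{FSZ} is different from your per-$m$ twisting of Pochhammer factors. One fixes a \emph{single} Ball--Rivoal-type rational function $R_n$ (carrying a factor $D^{\,cn}$, which is where the constraint $D<s^{1-\epsilon}$ and hence the constant $1-\epsilon$ enters the balance inequality) and, for each divisor $d\mid D$, sums $R_n$ over the rescaled lattice $\{k/d\}$. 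By partial fractions this produces linear forms $\ell_{n,d}=\rho_{0,d}^{(n)}+\sum_i d^{\,i}\rho_i^{(n)}\zeta(i)$ whose higher coefficients depend on $d$ only through the explicit factor $d^{\,i}$ --- the same ``related coefficients'' structure that the present paper exploits $p$-adically via the parameter $x$ in \eqref{eq_intro1}. The elimination then runs over \emph{all} divisors at once: if too few $\zeta(i)$ were irrational, one could take a weighted combination $\sum_d w_d\,\ell_{n,d}$ killing the coefficients of all the irrational values, and the contradiction rests on the nonvanishing of a generalized Vandermonde determinant in the quantities $d^{\,i}$ (distinct positive bases, distinct exponents), not on any injective labelling $m\mapsto k_m$. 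Without that construction and that nonvanishing input, your elimination step cannot be carried out, so the proposal does not constitute a proof.
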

More recently, S. Fischler gives a unified approach to both theorems using a variant of Shidlovsky's Lemma \cite{fischler_L}. More importantly, he has  extended both results to the $L$-values of Dirichlet characters and general $N$-periodic functions and has thereby generalized the previous work of Nash and Nishimoto \cite{nash,nishimoto}.\par
Let us now turn our attention to results on $p$-adic $L$-values for a prime $p$. Although we are still far away from having a good understanding of the nature of odd zeta values, even less is known about the values of the $p$-adic zeta function at positive integers. Building on a beautiful re-interpretation of Ap\'ery's proof by Beukers in terms of modular forms, Calegari succeeded in proving the irrationality of $\zeta_p(3)$ for $p=2,3$ and of $L_2(2,\chi)$ with $\chi$ the Dirichlet character of conductor $4$, \cite{calegari}. Shortly afterwards, Beukers gave a new proof of Calegari's results and furthermore provided irrationality proofs for certain values of the $p$-adic Hurwitz zeta function at small integers and for small primes $p$, cf.~\cite{beukers}. Partial results towards a $p$-adic analogue of the Ball--Rivoal theorem have been given by Bel, \cite{bel}:
\begin{thm*}[Bel]
	Let $d$ and $s$ be positive integers and $x\in \frac{1}{d}\ZZ$, then
	\[
		\dim_{\QQ}\left( \QQ+ \zeta_p(2,x)\QQ+\dots+\zeta_p(s,x)\QQ\right)\geq \frac{s \log|x|_p}{\log d+\sum_{l|d}\frac{\log l}{l-1} +s+(s-1)\log 2},
	\]
	if $|x|_p$ is sufficiently large.
\end{thm*}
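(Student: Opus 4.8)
The plan is to transport the Ball--Rivoal machinery to the non-archimedean place $p$: construct a sequence of small linear forms in $1,\zeta_p(2,x),\dots,\zeta_p(s,x)$ with integer coefficients, bound the archimedean size of the coefficients from above and the $p$-adic valuation of the forms from below, check that the forms are nonzero, and conclude with a $p$-adic linear independence criterion of Nesterenko type. Concretely, I would aim to produce $\ell_n\in\Qp$ of the form $\ell_n=p_{0,n}+\sum_{i=2}^{s}p_{i,n}\,\zeta_p(i,x)$ with $p_{i,n}\in\ZZ$ and $\ell_n\neq0$, satisfying $\log\max_i|p_{i,n}|\le\alpha\,n+o(n)$ and $-\log|\ell_n|_p\ge\beta\,n+o(n)$; the criterion then gives $\dim_\QQ\bigl(\QQ+\sum_{i=2}^{s}\zeta_p(i,x)\QQ\bigr)\ge\beta/\alpha$, and the construction will produce $\alpha=\log d+\sum_{l|d}\tfrac{\log l}{l-1}+s+(s-1)\log 2$ together with $\beta=s\log|x|_p$, which is exactly the asserted bound.

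For the forms, I would start from a Rivoal-type rational function with the denominator shifted by $x$,
\[
 R_n(t)=\frac{P_n(t)}{\bigl((t+x)(t+x+1)\cdots(t+x+n)\bigr)^{s}},
\]
where $P_n\in\QQ[t]$ is a suitable polynomial of degree $\le s(n+1)-2$ built from factorials and binomial coefficients (the analogue of Rivoal's $n!^{\,s-2r}\,(t-rn)_{(2r+1)n+1}$, with an auxiliary parameter $r$ to be optimized at the end). Its partial fraction expansion reads $R_n(t)=\sum_{i=1}^{s}\sum_{j=0}^{n}\tfrac{a_{i,j,n}}{(t+x+j)^{i}}$, and the degree bound forces $\sum_{j}a_{1,j,n}=0$. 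Using the difference relation of the $p$-adic Hurwitz zeta function, which expresses $\zeta_p(i,x+j)-\zeta_p(i,x)$ as an explicit rational number, together with $\sum_j a_{1,j,n}=0$ to push the weight-one part entirely into the rational term, the quantity naturally attached to $R_n$ collapses to $\ell_n=p_{0,n}+\sum_{i=2}^{s}p_{i,n}\zeta_p(i,x)$ with $p_{i,n}=\sum_j a_{i,j,n}$ for $i\ge2$ and $p_{0,n}\in\QQ$ collecting all corrections. Since $x\in\frac1d\ZZ$, the $a_{i,j,n}$ and the corrections have denominators dividing a fixed power of $d$ times a product $\prod_{l|d}l^{O(v_l(n!))}$; multiplying through by such a factor makes the coefficients integral, contributes (by Legendre's formula) $\bigl(\log d+\sum_{l|d}\tfrac{\log l}{l-1}\bigr)n+o(n)$ to the archimedean growth, and has $p$-adic valuation $o(n)$. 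Stirling's formula applied to the factorial ratios in $P_n$, after optimizing $r$, then bounds the remaining growth by $\bigl(s+(s-1)\log 2\bigr)n+o(n)$ -- the term $s$ coming from clearing the weight-$\le s$ denominators (a power $\lcm(1,\dots,n)^{s}$) and the term $(s-1)\log 2$ from the binomial factors.

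For the $p$-adic estimate I would use the size hypothesis on $x$: writing $(t+x)(t+x+1)\cdots(t+x+n)=x^{n+1}\prod_{j=0}^{n}\bigl(1+\tfrac{t+j}{x}\bigr)$ and observing that $(t+j)/x$ has positive $p$-adic valuation for all integers $t,j$ occurring in the expansion (because $|x|_p$ is large), the $s$-th power in the denominator of $R_n$ carries a factor of $p$-adic absolute value $|x|_p^{-s(n+1)}$, while $P_n(t)$ -- having coefficients with $p$-adic denominators of size $e^{o(n)}$ -- only costs a factor $e^{o(n)}$; hence $-\log|\ell_n|_p\ge\bigl(s\log|x|_p\bigr)n+o(n)$, provided $|x|_p$ is large enough to dominate both the denominators cleared above and the intrinsic $p$-adic size of the $\zeta_p(i,x)$. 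The genuinely delicate point is the non-vanishing $\ell_n\neq0$: I would isolate the leading term of $\ell_n$ in powers of $1/x$, show it is a nonzero rational number, and verify that every other contribution has strictly larger $1/x$-adic order, so that for $|x|_p$ sufficiently large the leading term cannot be annihilated. In the archimedean Ball--Rivoal argument this step is the well-poisedness/symmetry identity that kills the even zeta values; here the hypothesis on $|x|_p$ plays the analogous separating role, and I expect this bookkeeping of $1/x$-adic orders of the various pieces to be the main obstacle. Granting $\ell_n\neq0$, the linear independence criterion quoted at the outset yields
\[
 \dim_\QQ\bigl(\QQ+\zeta_p(2,x)\QQ+\dots+\zeta_p(s,x)\QQ\bigr)\ge\frac{s\log|x|_p}{\log d+\sum_{l|d}\tfrac{\log l}{l-1}+s+(s-1)\log 2},
\]
as claimed.
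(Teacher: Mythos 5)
This statement is Bel's theorem, which the paper quotes from \cite{bel} as prior work and does not prove; the introduction only sketches Bel's method (Pad\'e approximation for the classical Hurwitz zeta function, a $p$-adically valid Euler--Maclaurin expansion in $1/x$ to transfer the linear forms to $\Qp$, and a $p$-adic Siegel-type criterion applied to sufficiently many linearly independent families of forms), while the paper itself develops a different, intrinsically $p$-adic construction via Volkenborn integration for its own theorems. Your outline is therefore closer in spirit to the paper's own machinery than to Bel's, and its overall architecture (partial fractions of a Rivoal-type function shifted by $x$, telescoping via the difference equation of $\zeta_p(\cdot,x)$, Legendre-formula bookkeeping of the denominators coming from $d$, a Nesterenko-type criterion) is reasonable. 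But there is a genuine gap in how you invoke the criterion.

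You propose to prove $\log\max_i|p_{i,n}|\le\alpha n+o(n)$, the one-sided estimate $-\log|\ell_n|_p\ge\beta n+o(n)$, and the qualitative statement $\ell_n\ne0$, and to conclude $\dim\ge\beta/\alpha$. No criterion of this type delivers that conclusion from those inputs: \Cref{thm_nesterenko} requires the two-sided estimate $e^{-(\tau_1+o(1))\sigma(n)}\le|\Lambda_n(1,\underline{\theta})|_p\le e^{-(\tau_2-o(1))\sigma(n)}$ and yields $\tau_1/(\tau+\tau_1-\tau_2)$, so to recover $\beta/\alpha$ you must show $\tau_1=\tau_2=\beta$, i.e.\ compute $\nu_p(\ell_n)$ asymptotically \emph{exactly}; mere nonvanishing supplies no admissible value of $\tau_1$ at all. (Bel's own route via a Siegel-type criterion avoids the lower bound but instead requires exhibiting a full system of linearly independent forms, which you also do not construct.) The fix is hiding inside your last paragraph: if the leading term in powers of $1/x$ has strictly smaller $p$-adic valuation than every other contribution, the ultrametric inequality forces $\nu_p(\ell_n)$ to \emph{equal} the valuation of that leading term, which is precisely the missing matching lower bound for $|\ell_n|_p$, and is also where the hypothesis that $|x|_p$ be sufficiently large (it must dominate the $e^{o(n)}$ denominators and the $2^{(s-1)n}$-scale coefficient growth in the valuation comparison) is actually consumed. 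You should promote that step from a nonvanishing check to the central quantitative computation; it plays the role of the exact valuation formula the paper proves in \Cref{prop_padicEstimate} for its own linear forms. A secondary point: by \Cref{lem_zeta_Volkenborn} the forms you produce naturally involve $\omega(x)^{-i}\zeta_p(i+1,x)$, and the Teichm\"uller factors $\omega(x)^{-i}$ need not be rational, so some extra care (or passage to a field $K\supseteq\QQ(\mu_{p-1})$, as in the paper's treatment of \Cref{thm_Hurwitz}) is needed before the result can be stated over $\QQ$.
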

More generally, Bel proves similar results for certain $p$-adic Lerch zeta values \cite[Theorem 3.4.]{bel}.
Note that this theorem only applies if the modulus $x$ of the $p$-adic Hurwitz zeta function has a sufficiently large $p$-adic norm. Similar results have been obtained by Hirose, Kawashima and Sato in \cite{HKS}.\par
In the following, we write $\Cp$ for the completion of a fixed algebraic closure of the field of $p$-adic numbers $\Qp$. Let $\bar{\QQ}$ be the algebraic closure of $\QQ$ in $\Cp$ and let $K\subseteq \bar{\QQ}$ be a number field. For a Dirichlet character $\chi$, let us write $\QQ(\chi)\subseteq \bar{\QQ}$ for the smallest subfield of $\bar{\QQ}$ containing all values of $\chi$. Let us recall that we have the following $p$-adic interpolation property for the $p$-adic $L$-function of a Dirichlet character $\chi$:
\[
	L_p(i,\chi\omega^{1-i})=(1-\chi(p)p^{-i})L(i,\chi),\quad i\in\ZZ_{<0},
\]
where  $\omega$ is the Teichm\"uller character. This formula suggests the question of	 the structure of the $p$-adic integers $L_p(i,\chi\omega^{1-i})$ for \emph{positive} integers $i\in\ZZ$. The following result is the main theorem of this paper:
\begin{thm}\label{thm_main}
For $\epsilon>0$ and $s$ a sufficiently large positive integer, we have
\[
	\dim_{K}\left( K+\sum_{i=2}^{s}L_p(i,\chi\omega^{1-i})K  \right)\geq \frac{(1-\epsilon)\log s}{2[K:\QQ]\left( 1+\log(2)\right)}.
\]
\end{thm}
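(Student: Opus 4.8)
The plan is to transplant the proof of the Ball--Rivoal theorem — in the form Fischler gave for Dirichlet $L$-values — to the $p$-adic setting, with the archimedean asymptotic estimate for the auxiliary linear forms replaced by a $p$-adic estimate. The point of entry is the standard expression of $p$-adic $L$-values through the $p$-adic Hurwitz zeta function: for $f$ a suitable multiple of the conductor of $\chi$,
\[
	L_p(i,\chi\omega^{1-i})\;=\;\frac{1}{f^{\,i}}\sum_{\substack{a=1\\ p\,\nmid\,a}}^{f}\chi(a)\,\zeta_p\!\Big(i,\tfrac{a}{f}\Big).
\]
Thus it suffices to construct Ball--Rivoal-type linear forms in $1$ and the Hurwitz values $\zeta_p(2,a/f),\dots,\zeta_p(s,a/f)$ — this is the companion statement on the $p$-adic Hurwitz zeta function announced in the abstract — and then to combine them with the weights $\chi(a)$ into linear forms in $1$ and the $L_p(i,\chi\omega^{1-i})$. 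Keeping track of the number field generated by all the coefficients (it contains $K$, the values of $\chi$, and finitely many Teichm\"uller units) and bounding their denominators is what brings in the factor $[K:\QQ]$.

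For the construction, fix an auxiliary integer $r\ge1$ (the free parameter, to be chosen at the very end as in Ball--Rivoal) and, for $n\to\infty$, introduce the rational function
\[
	R_n(T)\;=\;n!^{\,s-2r}\,\frac{\prod_{l=1}^{rn}(T-l)\,\prod_{l=1}^{rn}(T+n+l)}{\prod_{j=0}^{n}(T+j)^{\,s}},
\]
which is $O(T^{-2})$ at infinity and has partial fraction decomposition $R_n(T)=\sum_{i=2}^{s}\sum_{j=0}^{n}c_{i,j}(T+j)^{-i}$. Using the representation of $\zeta_p$ as a $p$-adic distribution, one attaches to the (formally divergent) series $\sum_{t\ge0}R_n(\tfrac{a}{f}+t)$ a well-defined value $S_n^{(a)}$; expanding $R_n$ in partial fractions and telescoping with the functional equation $\zeta_p(i,x+1)=\zeta_p(i,x)-\langle x\rangle^{-i}$ gives
\[
	S_n^{(a)}\;=\;\rho_{0,n}^{(a)}\;+\;\sum_{i=2}^{s}\rho_{i,n}\,\zeta_p\!\Big(i,\tfrac{a}{f}\Big),\qquad \rho_{i,n}=\sum_{j=0}^{n}c_{i,j}\in\QQ,
\]
with $\rho_{0,n}^{(a)}$ a finite combination of rationals and Teichm\"uller units. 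The classical binomial estimates attached to $R_n$ then give, on the arithmetic side, $\lcm(1,\dots,n)^{\,s-2}\rho_{i,n}\in\ZZ$ and a bounded denominator for $\rho_{0,n}^{(a)}$, and, on the archimedean side, $\max_i|\rho_{i,n}|\le C(r,s)^{\,n}$ for an explicit $C(r,s)$, uniformly over the embeddings; the factors $n!^{\,s-2r}$ are precisely what make these hold.

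The \emph{decisive point} — and the step I expect to be the main obstacle — is the $p$-adic smallness of the linear forms: one must prove $|S_n^{(a)}|_p\le p^{-\sigma(r,s)\,n+o(n)}$ for a suitable $\sigma(r,s)>0$. In contrast with the complex case this cannot follow from a naive sup-norm bound on $R_n$, since $R_n$ has $p$-adic size $\approx1$ at generic arguments; it must be extracted from the arithmetic of the construction (the $p$-adic divisibility produced by $n!^{\,s-2r}$ and by the numerator values, exploiting the location of $a/f$), and in the partial-fraction expression it surfaces only as a cancellation between $\rho_{0,n}^{(a)}$ and $\sum_i\rho_{i,n}\zeta_p(i,a/f)$. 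Making this $p$-adic analysis sharp is the technical core, and the loss incurred there is what produces the extra factor $\tfrac12$ compared with the classical theorem. Granting the estimate, set $\ell_n:=\sum_a\chi(a)S_n^{(a)}$; after multiplying by a suitable constant times $\lcm(1,\dots,n)^{\,s-2}$ this becomes a linear form in $1$ and the $L_p(i,\chi\omega^{1-i})$ with coefficients in the fixed number field above, of house $\le e^{(s-2)n+o(n)}C(r,s)^{\,n}$ and $p$-adically of size $\le p^{-\sigma(r,s)n+o(n)}$. Feeding these forms — together with the verification that consecutive ones $\ell_n,\ell_{n+1}$ are $K$-linearly independent, which follows from a Shidlovsky-type lemma as in Fischler's approach or from an explicit non-vanishing of $R_n$ — into a $p$-adic analogue of Nesterenko's linear independence criterion over the number field $K$ yields a lower bound for $\dim_K\!\big(K+\sum_{i=2}^{s}L_p(i,\chi\omega^{1-i})K\big)$; optimizing the choice of $r$ exactly as in the proof of Ball--Rivoal makes it at least $\dfrac{(1-\epsilon)\log s}{2[K:\QQ](1+\log2)}$ for $s$ large, which is Theorem~\ref{thm_main}.
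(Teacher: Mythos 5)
Your proposal stops exactly at the step that carries the whole proof: you grant the $p$-adic smallness $|S_n^{(a)}|_p\le p^{-\sigma(r,s)n+o(n)}$ without supplying a mechanism for it, and the Ball--Rivoal auxiliary function you chose does not provide one. As you yourself observe, $R_n$ has $p$-adic size $\approx 1$ at generic arguments; but the shifted numerators $\prod_{l=1}^{rn}(T-l)\prod_{l=1}^{rn}(T+n+l)$, which create the archimedean decay in the classical proof, contribute nothing $p$-adically, and evaluating at $a/f$ with $f$ a \emph{fixed} multiple of the conductor gives no $p$-adic gain either. The paper's construction is structurally different on precisely these two points. It uses
\[
	R_{n}(t)=n!^s \binom{N(n)}{\underline{n}}^Q\binom{Dt+N(n)}{N(n)}^Q\frac{(Dt)^{2+\delta}}{(t)_{n+1}^s},\qquad D=d'p^{l},\ Q=p^{r+l+1},
\]
and integrates $R_n(t+j/D)$ in the Volkenborn sense. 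The $p$-adic decay has two sources: (i) by Kummer and Lucas, $\binom{N(n)+Dx+j}{N(n)}$ is congruent modulo $p$ to the characteristic function of a single residue class modulo $p^{\lfloor\log_p(nd')\rfloor+1}$, and raising it to the power $Q$ upgrades this congruence to a high power of $p$, so the Volkenborn integral concentrates and can be computed exactly modulo $p^{-m+l+r}$; (ii) the evaluation points $j/D$ have denominator divisible by $p^{l}$, so the factor $\prod_{i=0}^{n}(D(t+i)+j)^{s}$ contributes valuation $(n+1)sl$, which is the dominant term. Moreover, the factor $(Dt)^{2+\delta}$ is inserted so that the leading term of $\sum_j\chi(j)\int R_n(t+j/D)\,\mathrm{d}t$ is a unit multiple of $(1-\chi(p)p^{1+\delta})B_{2+\delta,\chi}\neq 0$; this yields the \emph{exact} $p$-adic valuation, hence both the upper and the lower bound required by the Nesterenko--Chantanasiri criterion, replacing the Shidlovsky-type non-vanishing step you invoke.

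Consequently the optimization at the end is also over the wrong parameter in your sketch: the $\log s$ in the lower bound does not come from a Ball--Rivoal parameter $r$ in the numerator, but from choosing $l\approx\frac{\log s}{2\log p}$ (via the Lambert $W$-function) so that $\tau_p\sim s\,l\log p\sim\tfrac12 s\log s$ while the archimedean cost $d'p^{2l+O(1)}\log(d'p^{1+l})$ of summing over the $D$ residues stays below $\epsilon s/3$; the factor $\tfrac12$ is the price of $p^{2l}$ against $p^{l}$ in this trade-off, not a loss in the $p$-adic estimate. Your high-level frame (Hurwitz decomposition, integrality and height bounds, a $p$-adic Nesterenko criterion over $K$ giving the factor $[K:\QQ]$) matches the paper, but without the characteristic-function congruence and the $p$-power denominator $D$ the central estimate is not merely unproved --- it is false for the linear forms you construct.
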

In the classical situation, half of the positive $L$-values are 'trivially' transcendental and one usually excludes them by a parity condition. In the $p$-adic world, the situation is different. Half of the positive $p$-adic $L$-values vanish anyhow: Indeed, this follows from the vanishing of $L(1-s,\chi)$ at half of the negative integers and the interpolation property. So, contrary to the classical case, we do not have to impose a parity condition.\par
The special case $\chi=1$ gives a linear independence criterion for $p$-adic zeta values:
\begin{cor}
	For every $\epsilon>0$ and $s$ sufficiently large, we have
	\[
	\dim_{K}\left( K+\sum_{i=2}^{s}\zeta_p(i)K  \right)\geq \frac{(1-\epsilon)\log s}{2[K:\QQ](1+\log(2))}.
	\]
\end{cor}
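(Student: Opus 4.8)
The plan is to read off the corollary from Theorem~\ref{thm_main} by specializing to the trivial Dirichlet character $\chi=\mathbf{1}$; there is essentially nothing to prove beyond unwinding the definitions. First I would recall that for $\chi=\mathbf{1}$ the Kubota--Leopoldt $p$-adic $L$-function is, by construction, the $p$-adic zeta function: for every integer $i\geq 2$ one has $L_p(i,\chi\omega^{1-i})=L_p(i,\omega^{1-i})=\zeta_p(i)$, the twist by $\omega^{1-i}$ being precisely the bookkeeping that places the argument $i$ in the residue class modulo $p-1$ on which the relevant branch is $p$-adically interpolated. Moreover $\mathbf{1}$ takes only the value $1$, so $\QQ(\chi)=\QQ$ and the hypotheses of Theorem~\ref{thm_main} hold for an arbitrary number field $K\subseteq\bar{\QQ}$.

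With this identification the vector space $K+\sum_{i=2}^{s}L_p(i,\chi\omega^{1-i})K$ occurring in Theorem~\ref{thm_main} is literally $K+\sum_{i=2}^{s}\zeta_p(i)K$, while the lower bound $\frac{(1-\epsilon)\log s}{2[K:\QQ](1+\log 2)}$ is carried over unchanged. Hence the estimate follows at once. In particular there is no genuine obstacle here: the whole content already sits in Theorem~\ref{thm_main}, and the corollary merely records its most classical instance.

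If one wanted a fully self-contained statement, the single point deserving a remark is the matching of normalizations between the branch $s\mapsto L_p(s,\omega^{1-i})$ manipulated in the proof of Theorem~\ref{thm_main} and the object the literature denotes $\zeta_p(i)$. This reduces to comparing the interpolation property $L_p(1-n,\mathbf{1})=(1-p^{n-1})\zeta(1-n)$ for integers $n\geq 2$, already recalled in the introduction for general $\chi$, with the defining interpolation property of the $p$-adic zeta function; the verification is routine.
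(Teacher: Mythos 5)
Your proposal is correct and matches the paper exactly: the corollary is obtained by specializing \Cref{thm_main} to the trivial character $\chi=\mathbf{1}$, with the identification $L_p(i,\omega^{1-i})=\zeta_p(i)$ taken as the definition of $\zeta_p$ (so even the normalization check you flag is dispensed with by fiat in the paper). Nothing further is needed.
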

Here, we use the definition
\[
	\zeta_p(s):=L_p(s,\omega^{1-s})=\lim_{k\rightarrow s} \zeta(k),\quad \text{for } s\in\ZZ_{>1},
\]
where $k$ runs over a sequence of negative integers  which are congruent to $s$ modulo $p-1$ and converge $p$-adically to $s$, see \cite[Lemma 2.4]{calegari}. This is motivated by the fact that the functions
\[
	s\mapsto L_p(s,\omega^{1-k_0}),
\]
for $k_0\in \{1,\dots,p-1\}$, are exactly the $(p-1)$ branches interpolating the values of the Riemann zeta function at negative integers.

Furthermore, we obtain the following result for the $p$-adic Hurwitz zeta values as a byproduct:
\begin{thm}\label{thm_Hurwitz}
Let $x\in \QQ$ with $|x|_p>1$. For $\epsilon>0$ and $s$ sufficiently large, we have
\[
	\dim_{K}\left( K+\zeta_p(2,x)K+\dots+\zeta_p(s,x)K  \right)\geq \frac{(1-\epsilon)\log s}{2[K:\QQ](1+\log(2))}.
\]
\end{thm}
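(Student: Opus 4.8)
The plan is to deduce Theorem~\ref{thm_Hurwitz} from (the machinery behind) Theorem~\ref{thm_main}, since the two statements have identical lower bounds and the $p$-adic $L$-values $L_p(i,\chi\omega^{1-i})$ are, up to elementary factors, finite $\QQ(\chi)$-linear combinations of $p$-adic Hurwitz zeta values $\zeta_p(i,a/N)$ with $N$ the conductor of $\chi$. First I would make precise the reverse of this relation: for a fixed $x\in\QQ$ with $|x|_p>1$, write $x$ in lowest terms and let $N$ be its denominator (so $p\nmid N$ is impossible — rather $p\mid N$, which is exactly what forces $|x|_p>1$); then the family $\{\zeta_p(i,x)\}$ is governed by the same kind of rational-function/Padé-approximation construction used for the $L$-values. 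Concretely, I expect the proof of Theorem~\ref{thm_main} to proceed by constructing, for each large $n$, explicit linear forms
\[
	\ell_n = a_{0,n} + \sum_{i=2}^{s} a_{i,n}\, \zeta_p(i,x)
\]
with $a_{i,n}\in K$ (in fact in $\ZZ$ or a fixed order), together with $p$-adic upper bounds $|\ell_n|_p \to 0$ fast, lower bounds on $\max_i |a_{i,n}|$ controlled archimedeanly, and a non-vanishing/rank statement coming from a Shidlovsky-type lemma (à la Fischler \cite{fischler_L}). The same construction, specialized at the single parameter $x$ instead of averaged over residues mod $N$, yields the analogous data for $\{\zeta_p(i,x)\}$.

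The key steps, in order, would be: (1) recall the integral/series representation of $\zeta_p(i,x)$ and the rational-function construction $R_n(T)$ whose $p$-adic "values" (via the relevant $p$-adic measure or Volkenborn-type integral) produce the linear forms $\ell_n$ above; here the hypothesis $|x|_p>1$ is what makes the relevant $p$-adic series converge and makes $\zeta_p(i,x)$ well-defined and $p$-adically small in a usable way. (2) Establish the arithmetic estimates: a common denominator $d_n$ for the coefficients $a_{i,n}$ with $\log d_n \sim C n$, an archimedean bound $\log|a_{i,n}| \lesssim (\text{something})\cdot n$, and the crucial $p$-adic smallness $\log|\ell_n|_p \lesssim -(\text{something bigger})\cdot n$ — the ratio of these two "somethings" is what produces the constant $1/(2(1+\log 2))$. (3) Invoke the linear-independence criterion (Nesterenko-type / Fischler's variant of Shidlovsky's lemma) to convert the existence of such a sequence of linear forms, with a rank-$\ge r$ non-degeneracy condition on the coefficient matrix, into the bound $\dim_K(K+\sum_{i=2}^s \zeta_p(i,x)K) \ge r$, and finally optimize the free parameter(s) in the construction to get $r \ge \frac{(1-\epsilon)\log s}{2[K:\QQ](1+\log 2)}$ for $s$ large.

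The main obstacle I anticipate is step (3)'s non-vanishing input — showing the constructed linear forms are "sufficiently independent", i.e. that the coefficient matrix has the required rank, rather than all the $\ell_n$ secretly lying in a small subspace. In the classical Ball–Rivoal setting this is exactly where Shidlovsky's lemma (or Fischler's combinatorial substitute) does the heavy lifting, and transplanting it to the $p$-adic Hurwitz setting requires checking that the relevant differential-system / holonomy hypotheses survive. A secondary subtlety is uniformity in $x$: the bound is claimed for \emph{every} fixed $x\in\QQ$ with $|x|_p>1$ with the \emph{same} constant, so I would need the estimates in steps (1)–(2) to depend on $x$ only through lower-order terms that get absorbed into the $\epsilon$ as $s\to\infty$ (in particular the "$\log|x|_p$" factor that appears in Bel's theorem must \emph{not} appear in the leading term here — indeed in our normalization it is the growth in $s$, not in $|x|_p$, that drives the bound, which is precisely the improvement over \cite{bel}). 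Once these are in place, Theorem~\ref{thm_Hurwitz} follows by the same endgame as Theorem~\ref{thm_main}, essentially verbatim.
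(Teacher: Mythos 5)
Your high-level plan --- reuse the rational-function/Volkenborn machinery behind \Cref{thm_main} and feed the resulting linear forms into a linear independence criterion --- is indeed the paper's strategy, but the two steps you leave as ``expected to work'' are exactly where the substance lies, and in both cases the mechanism you propose is not the one that works. First, the construction is \emph{not} a specialization of the $L$-value construction to the single point $x$. The factor $\log s$ in the numerator of the bound comes from letting the auxiliary modulus $D=d'p^{l}$ grow with $s$: one takes $l=\ell(s)\sim \log s/(2\log p)$ via the Lambert $W$-function so that $\tau_p(l,s)\sim s\,l\log p$ beats $\tau_\infty(l,s)\sim [K:\QQ]\,s(1+\log 2)$ by a factor of order $\log s$. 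Consequently one cannot work at $x=j_0/d$ directly; the paper builds linear forms at the $p^{l-l_0}$ points $(j_0+jd)/D$ and sums them, exploiting that the higher coefficients $\rho_i^{(n)}$ in \eqref{eq_intro1} are independent of the point, to land back on linear forms in $\zeta_p(i,x)$. Your sketch gives no mechanism producing the $\log s$; ``optimize the free parameters'' hides the entire point.

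Second, and more seriously, your step (3) leans on a Shidlovsky-type rank or non-degeneracy statement \`a la Fischler, and you identify transplanting it to the $p$-adic setting as the main obstacle. No such lemma is used, and none is available here. The criterion actually applied (\Cref{thm_nesterenko}, after Chantanasiri) takes a \emph{single} sequence of linear forms but demands a two-sided estimate $e^{-(\tau_1+o(1))\sigma(n)}\leq|\Lambda_{\sigma(n)}(1,\underline{\theta})|_p\leq e^{-(\tau_2-o(1))\sigma(n)}$; the lower bound, i.e.\ the non-vanishing, is obtained by computing the \emph{exact} $p$-adic valuation of the linear form. This is done with Kummer/Lucas and the wavelet calculus of \Cref{lem_df}: in the Hurwitz case one shows $\sum_{0\leq j<p^{l-l_0}}\int_{\Zp}f_{j_0+jd}(t)\,\mathrm{d}t\equiv p^{-m+l-l_0}\bmod p^{-m+l}$, which is visibly non-zero (in the $L$-value case the analogous congruence produces $B_{2+\delta,\chi}$, whence \Cref{lem_binomial_nontrivial}). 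Without this exact-valuation computation, or a concrete substitute, your argument does not close; the ``differential-system/holonomy hypotheses'' you propose to verify are not the relevant obstruction in this paper.
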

 Our strategy is roughly, to construct linear forms of $p$-adic $L$-values, using certain linear combinations of linear forms of $p$-adic Hurwitz $L$-values with related coefficients. Many ideas of our paper are motivated by Bel's paper, so let us briefly recall his strategy. Bel starts with the construction of linear forms in values for the classical Hurwitz zeta function via Pad\'e approximation. Then, he observes that a variant of the Euler--Maclaurin formula
\[
	\zeta(s,x)=\frac{x^{1-s}}{s-1}-\sum_{j=1}^k\binom{-s}{j-1}\frac{B_j}{j}x^{1-s-j}+O(x^{-s-j}) \text{ as } x\rightarrow \infty,
\]
is also valid $p$-adically. This allows him to transfer the linear forms in values of the classical Hurwitz zeta function to the $p$-adic world. In this way, he obtains a sufficient number of linearly independent families of linear forms, to which he finally applies a $p$-adic variant of Siegel's criterion. In this paper, we avoid the passage from the Archimedean to the non-Archimedean world and work intrinsically using $p$-adic methods. To outline our approach, let us briefly recall that the most important source of linear forms in Hurwitz zeta values is provided by sums of the form
\[
	\sum_{k\geq 0} R_n(k+x)=L_n(1,\zeta(3,x),\dots,\zeta(s,x)),\quad x\in\QQ,
\]
with $R_n(t)\in\QQ(t)$ a rational function of degree less then $1$ having only poles of maximal order $s$ at negative integers. Indeed, this construction appears in all of the above-mentioned results on zeta values. The formula 
\[
		\omega(x)^{1-s}\zeta_p(s,x)=\frac{1}{s-1}\int_{\Zp} \frac{1}{(x+t)^{s-1}}\mathrm{d}t:=\lim_{n\rightarrow\infty}\frac{1}{p^n}\sum_{1\leq k< p^n}\frac{1}{(x+k)^{s-1}},
\]
(see \Cref{lem_zeta_Volkenborn}) suggests that \emph{Volkenborn integration} over rational functions is a right $p$-adic analogue to the above summation process.\par
\vfill
Our paper is organized as follows: After recalling basic facts about $p$-adic Hurwitz zeta functions and Volkenborn integration, we use Volkenborn integration over certain rational functions $R_n(t)\in\QQ(t)$ to construct families of linear forms in $p$-adic Hurwitz zeta values (see \cref{sec_1})
\begin{equation}\label{eq_intro1}
		\int_{\Zp} R_n(t+x) \mathrm{d}t=\rho^{(n)}_{0,x}+\sum_{i=1}^s \rho_{i}^{(n)}\omega(x)^{-i}\zeta_p\left(i+1,x\right).
\end{equation}
Here, $x\in \QQ$ is a rational number of sufficiently large $p$-adic norm. This yields families of linear forms with \emph{related coefficients} in the sense that the higher coefficients $\rho_i^{(n)}$, $i\geq 1$ are independent of the parameter $x\in\Qp$. In \cref{sec_1_1}, we study the arithmetic properties of the coefficients $\rho_i^{(n)}$. In \cref{sec_1_2}, we discuss the Archimedean growth of the coefficients $\rho_i^{(n)}$ and \cref{sec_1_3} contains the $p$-adic convergence properties of the linear forms. Since the higher coefficients are independent of $x$, certain linear combinations of \eqref{eq_intro1} yield linear forms in $p$-adic $L$-values. Finally, in \cref{sec_2}, we apply  the following variant of Nesterenko's $p$-adic linear independence criterion to conclude the proof.
\begin{thm}\label{thm_nesterenko}
	Let $\tau_1,\tau_2,\tau$ be positive real numbers and $\sigma(n)$ a non-decreasing positive function satisfying $\lim_{n\rightarrow \infty}\sigma(n)=\infty$ and $\lim_{n\rightarrow\infty}\frac{\sigma(n+1)}{\sigma(n)}=1$. Let $\underline{\theta}=(\theta_1,\dots,\theta_s)\in\Cp^s$. Assume that for all sufficiently large integers $n$, there exists a linear form with coefficients in the ring of integers $\Ocal_K$ of the number field $K$ in $s+1$ variables
	\[
		\Lambda_n(\underline{X})=\lambda_{0,n}X_0+\lambda_{1,n}X_1+\dots+\lambda_{s,n}X_m,\quad \lambda_{i,n}\in\Ocal_K
	\]
	satisfying
	\[
		H_K(\Lambda_n)\leq e^{\sigma(n)(\tau+o(1))} \quad\text{ and } e^{-(\tau_1+o(1))\sigma(n)}\leq |\Lambda_n(1,\underline{\theta})|_p\leq e^{-(\tau_2-o(1))\sigma(n)}.
	\]
	Then 
	\[
		\dim_K(K+K\theta_1+\dots+K\theta_s)\geq \frac{\tau_1}{\tau+\tau_1-\tau_2}.
	\] 
	Here, the height of the linear form $\Lambda_n$ is defined as
	\[
	H_K(\Lambda_n):=\max_{0\leq i\leq s} |N_K(\lambda_{i,n})|,
	\]
	with $N_K\colon K\rightarrow \QQ$ the norm map.
\end{thm}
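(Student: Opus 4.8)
The plan is to deduce the criterion from a Nesterenko--type determinant estimate together with the product formula for $K$. Set $r:=\dim_K(K+K\theta_1+\dots+K\theta_s)$; since $K$ itself lies in this space we have $r\ge1$, and the substantial goal is the inequality $r\tau\ge\tau_2$, from which the stated bound will follow purely formally (last paragraph). The first step is a reduction: after reordering the $\theta_i$, fix a $K$-basis $\theta_{i_0}=1,\theta_{i_1},\dots,\theta_{i_{r-1}}$ of $K+K\theta_1+\dots+K\theta_s$ inside $\Cp$, write $\theta_j=\sum_{k=0}^{r-1}c_{j,k}\theta_{i_k}$ with $c_{j,k}\in K$ for the remaining indices $j$, and choose an integer $D>0$ clearing all denominators. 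Replacing $\Lambda_n$ by the reduced form $M_n:=\sum_{k=0}^{r-1}\mu_{k,n}Y_k$ with $\mu_{k,n}:=D\bigl(\lambda_{i_k,n}+\sum_j c_{j,k}\lambda_{j,n}\bigr)\in\Ocal_K$, one has $M_n(1,\theta_{i_1},\dots,\theta_{i_{r-1}})=D\cdot\Lambda_n(1,\underline\theta)=:u_n$. This passage multiplies each coefficient's norm by a fixed factor and $u_n$ by the fixed $|D|_p$, both absorbed into the $o(1)$'s because $\sigma(n)\to\infty$; hence $|N_K(\mu_{k,n})|\le e^{(\tau+o(1))\sigma(n)}$ and $e^{-(\tau_1+o(1))\sigma(n)}\le|u_n|_p\le e^{-(\tau_2-o(1))\sigma(n)}$, so in particular $u_n\ne0$ for $n\gg0$, while the $\theta_{i_k}$ are $K$-linearly independent by construction.

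The heart of the matter is a selection step. Let $r':=\dim_K\bigl(\sum_{n\gg0}K\cdot M_n\bigr)\le r$ be the eventual rank of the family $(M_n)_n$ (well defined, since the $K$-span of the tail is a non-increasing chain of finite-dimensional subspaces). I would like to find, for every sufficiently large $N$, indices $N\le n_0<n_1<\dots<n_{r'-1}$ with $\sigma(n_{r'-1})=(1+o(1))\sigma(N)$ such that $M_{n_0},\dots,M_{n_{r'-1}}$ are $K$-linearly independent. Producing such a family at a single scale is the one genuinely delicate point: it comes from a rank-stabilisation argument, in which $u_n\ne0$ and the $K$-linear independence of the $\theta_{i_k}$ bound how long the forms $M_n$ can stay confined to a proper subspace, and $\sigma(n+1)/\sigma(n)\to1$ keeps all chosen indices at a comparable scale. (When $r'=1$ the ``family'' is a single nonzero form; the harmless case that the $M_n$ never involve $Y_0$ is removed beforehand by reducing the number of variables, so I may assume $Y_0$ is among the coordinates used below.)

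Given such a family, put $\Delta:=\det\bigl(\mu_{k,n_j}\bigr)_{0\le k,j\le r'-1}$, a nonzero element of $\Ocal_K$ (extracting, if $r'<r$, an $r'\times r'$ minor of maximal rank that includes the $Y_0$-column). Using $\theta_{i_0}=1$ to replace the zeroth column of coefficients via $\mu_{0,n_j}=u_{n_j}-\sum_{k\ge1}\mu_{k,n_j}\theta_{i_k}$ and expanding $\Delta$ along that column, one gets $\Delta=\sum_j(\pm u_{n_j})\,m_j$ with minors $m_j\in\Ocal_K$; hence $|\Delta|_p\le\max_j|u_{n_j}|_p\le e^{-(\tau_2-o(1))\sigma(N)}$, the scale $\sigma(N)$ being valid uniformly since $\sigma$ is non-decreasing and all $n_j\ge N$. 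At each finite place $v\nmid p$ one has $|\Delta|_v\le1$ because $\Delta\in\Ocal_K$, and the bound on the coefficients (all at scale $(1+o(1))\sigma(N)$) gives $|N_K(\Delta)|\le e^{(r'\tau+o(1))\sigma(N)}$. Since $\Delta$ is a nonzero algebraic integer, the product formula over $K$ yields $|N_K(\Delta)|\ge|\Delta|_p^{-c}$ for some constant $c\ge1$ depending only on $K$ and $p$ (the finite places away from $p$ only decrease $|\Delta|_v$, which only helps). Combining with $|N_K(\Delta)|\le e^{(r'\tau+o(1))\sigma(N)}$ and $|\Delta|_p\le e^{-(\tau_2-o(1))\sigma(N)}$ and letting $N\to\infty$ gives $r'\tau\ge c\,\tau_2\ge\tau_2$, hence $r\tau\ge r'\tau\ge\tau_2$.

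Finally the stated bound is elementary. The hypotheses force $\tau_1\ge\tau_2$ (otherwise the constraint $e^{-(\tau_1+o(1))\sigma(n)}\le|\Lambda_n(1,\underline\theta)|_p\le e^{-(\tau_2-o(1))\sigma(n)}$ cannot hold for $n$ large), and then $\max(1,\tau_2/\tau)\ge\tau_1/(\tau+\tau_1-\tau_2)$: if $\tau\le\tau_2$ the left side equals $\tau_2/\tau$ and cross-multiplication reduces the claim to $\tau\le\tau_2$, whereas if $\tau>\tau_2$ the right side is already $<1$. As $r\ge1$ and $r\ge\tau_2/\tau$ by the previous paragraph, we conclude $r\ge\tau_1/(\tau+\tau_1-\tau_2)$. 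The main obstacle, as indicated, is the scale-controlled construction of linearly independent small forms in the second step; the rest is a careful but routine accounting of the places of $K$ and of the various $o(1)$-terms.
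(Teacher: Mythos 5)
Your reduction to a basis, the determinant-plus-product-formula computation, and the closing observation that $\tau_1\ge\tau_2$ forces $\max(1,\tau_2/\tau)\ge\tau_1/(\tau+\tau_1-\tau_2)$ are all fine. The gap is exactly at the point you flag as ``the one genuinely delicate point'': the claim that for every large $N$ one can find $r'$ linearly independent forms $M_{n_0},\dots,M_{n_{r'-1}}$ with all indices at scale $\sigma(n_j)=(1+o(1))\sigma(N)$. This is not provable from the hypotheses and is false in general. The forms can stay confined to a proper subspace (e.g.\ remain proportional to a single form $M$) over a stretch $[N,N']$; the only constraint the hypotheses impose is, via the product formula applied to the scalars $c_n$ with $M_n=c_nM$, roughly $(\tau+o(1))\sigma(n)\ge(\tau_2-o(1))\sigma(n)-(\tau_1+o(1))\sigma(N)$, which permits $\sigma(N')$ to exceed $\sigma(N)$ by a constant factor of order $\tau_1/(\tau_2-\tau)$ when $\tau_2>\tau$ --- not by $1+o(1)$. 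This is precisely why the lower bound $|\Lambda_n(1,\underline\theta)|_p\ge e^{-(\tau_1+o(1))\sigma(n)}$ appears in the hypotheses and why $\tau_1$ appears in the conclusion: your sketch never actually uses that lower bound in the determinant estimate, and the intermediate statement you aim for, $r\ge\tau_2/\tau$, is strictly stronger than the theorem (for $\tau=1,\tau_2=10,\tau_1=11$ it would give $10$ instead of $5.5$). A Siegel-type bound of the shape $\tau_2/\tau$ requires a full system of independent forms at each level; with one form per level only the Nesterenko-type bound survives.

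The paper's proof handles this by the inductive construction in \Cref{thm_chantanasiri}: starting from $k_0=n$ one repeatedly picks $k_{i+1}$ as the \emph{largest} smaller index at which a new direction appears, and the cost of each backward step is paid for by the ratio $B_{k_{i+1}+1}=|L_{k_{i+1}}(\underline\xi)|_p/|L_{k_{i+1}+1}(\underline\xi)|_p\le e^{(\tau_1-\tau_2+o(1))\sigma}$, accumulating to the factor $(B_nQ_{n-1})^rQ_n$ against $A_n$; the case where the induction cannot be continued (your $r'<r$ situation, which also breaks your column manipulation, since the omitted columns contribute terms $\sum_{k\notin S}\mu_{k,n_j}\theta_{i_k}$ that do not lie in $K$) is dealt with by the separate escape clause \eqref{thm_chan_eq2} involving $L_{n_0}$. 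To repair your argument you would need to replace the $(1+o(1))$-window claim by this bookkeeping, at which point you recover the stated bound $\tau_1/(\tau+\tau_1-\tau_2)$ rather than $\max(1,\tau_2/\tau)$.
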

This variant of Nesterenko's criterion is essentially based on Chantanasiri's work, but the proof for general number fields does not appear in the literature. We include the proof in \cref{ch_linearindependence}.\par
We hope that the methods developed in this paper, especially the technique of Volkenborn integration over suitable rational functions, proves to be useful in further investigations of $p$-adic $L$-values.

\section*{Acknowledgement}
It is a pleasure to thank Pierre Bel for pointing out a mistake in an early draft of this paper. I would like to thank Pierre Bel and Tanguy Rivoal for interesting and inspiring discussions in Lyon. Furthermore, I am grateful for many valuable comments and remarks of St\'ephane Fischler and Wadim Zudilin. Last but not least, I would like to thank the anonymous referees for all the valuable suggestions, which have improved the quality of the paper considerably.  

\section{Volkenborn integration and wavelets}
The technique of Volkenborn integration is our most important ingredient for the construction of linear forms in $p$-adic $L$-values. The theory of ($p$-adic) wavelets, having its origin in $p$-adic analysis, turns out to be useful for bounding the $p$-adic growth of the involved linear forms.
In this section, we recall and develop the necessary properties of Volkenborn integration and wavelets.\par

Volkenborn integration has been developed by Volkenborn in \cite{volkenborn_1}. Let us write $C(\Zp,\Qp)$ for the space of continuous functions from $\Zp$ to $\Qp$ and let $f\in C(\Zp,\Qp)$. The function $f$ is considered to be \emph{Volkenborn integrable} if the sequence
\[
	 \frac{1}{p^n}\sum_{0\leq k<p^n}f(k)
\]
converges $p$-adically. In this case, the value
\[
	\int_{\Zp} f(t) \mathrm{d}t:=\lim_{n\rightarrow\infty} \frac{1}{p^n}\sum_{0\leq k<p^n}f(k)
\]
is called the \emph{Volkenborn integral} of $f$. For example, continuously differentiable functions and analytic functions are Volkenborn integrable, see \cite[\S 55]{schikhof}.

The Volkenborn integral is not translation invariant, indeed the following holds.
\begin{prop}[{\cite[Satz 3]{volkenborn_1}}]\label{prop_volkenborn_transl}
	Let $m\in\ZZ_{\geq 0}$, and let $f\colon \Zp\rightarrow\Qp$ be Volkenborn integrable. Then
	\[
		\int_{\Zp} f(u+m) \mathrm{d}u=\int_{\Zp}f(u)\mathrm{d}u +\sum_{i=0}^{m-1}f'(i).
	\]
\end{prop}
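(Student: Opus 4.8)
The plan is to argue directly from the definition of the Volkenborn integral as a limit of Riemann-type averages. Fix $n$ large enough that $p^n\ge m$. Reindexing the summation variable $k\mapsto k-m$ in $\frac{1}{p^n}\sum_{0\le k<p^n} f(k+m)$ gives $\frac{1}{p^n}\sum_{m\le k<p^n+m} f(k)$, and comparing this with $\frac{1}{p^n}\sum_{0\le k<p^n} f(k)$ — the summation ranges $[m,p^n+m)$ and $[0,p^n)$ differ by adjoining $[p^n,p^n+m)$ and deleting $[0,m)$ — yields
\[
	\frac{1}{p^n}\sum_{0\le k<p^n} f(k+m) = \frac{1}{p^n}\sum_{0\le k<p^n} f(k) + \sum_{j=0}^{m-1}\frac{f(p^n+j)-f(j)}{p^n}.
\]

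Now I would let $n\to\infty$. The first term on the right converges to $\int_{\Zp} f(u)\,\mathrm{d}u$ by hypothesis. For the finite sum, the crucial input is that $p^n\to 0$ in $\Zp$, so that for each fixed $j$ the difference quotient $\frac{f(j+p^n)-f(j)}{p^n}$ tends to the derivative $f'(j)$. Hence the right-hand side converges, which simultaneously shows that $f(\,\cdot\,+m)$ is again Volkenborn integrable and that its integral equals $\int_{\Zp} f(u)\,\mathrm{d}u + \sum_{j=0}^{m-1}f'(j)$, as claimed. Equivalently, one can first treat $m=1$, where the finite sum collapses to the single term $\frac{f(p^n)-f(0)}{p^n}\to f'(0)$, and then induct on $m$ by applying the $m=1$ case to the translate $u\mapsto f(u+m-1)$; the two approaches are the same computation organized differently.

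The one point that needs care — and which I expect to be the only real obstacle — is the step $\frac{f(j+p^n)-f(j)}{p^n}\to f'(j)$: it presupposes that $f$ is differentiable at the integers $0,1,\dots,m-1$, or at least that the difference quotients along the specific null sequence $h=p^n$ converge to $f'(j)$. This is automatic for the functions to which the proposition will actually be applied (rational functions without poles at these points, hence locally analytic, in particular $C^1$) and more generally is part of the standing regularity assumptions under which the Volkenborn calculus, and this translation formula in particular, is set up. Granting this, the rest is bookkeeping with the defining limits.
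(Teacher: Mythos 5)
Your argument is correct, and it is the standard proof of this translation formula: reindexing the average over $\{0,\dots,p^n-1\}$ produces the boundary terms $\frac{f(j+p^n)-f(j)}{p^n}$, which converge to $f'(j)$ since $p^n\to 0$ in $\Zp$. The paper itself gives no proof here --- it cites the result directly from Volkenborn's paper --- so there is nothing to compare against; your caveat about differentiability at $0,1,\dots,m-1$ is the right one to flag, and it is indeed harmless in context, since the proposition is only ever applied to functions of the form $t\mapsto (t+x+k)^{-i}$ with no poles in $\Zp$, which are (locally) analytic and in particular $C^1$.
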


Let us now turn our attention to the orthonormal basis of wavelets for $C(\Zp,\Qp)$. The orthonormal basis of wavelets has been introduced by van der Put. Here, we emphasize certain properties of wavelets which are useful for computing Volkenborn integrals. For $k\in\ZZ_{\geq 0}$, define
\[
	l(k):=\begin{cases}
		0 ,& \text{if } k=0\\
		\left\lfloor \frac{\log k}{\log p} \right\rfloor+1, & \text{if } k>0,
	\end{cases}
\]
and
\[
	\chi_k:=(\text{the characteristic function of }k+p^{l(k)}\Zp).
\]
The functions $\chi_k$ are called \emph{wavelets} and form an orthonormal basis for $C(\Zp,\Qp)$.
\begin{prop}[{\cite[Thm. 62.2]{schikhof}}]\label{prop_wavelet_coeff}
	 Every function $f\colon \Zp\rightarrow \Qp$ can be written uniquely as a convergent series
	\[
		f=\sum_{k\geq 0} a_k \chi_k
	\]
	called the \emph{wavelet expansion}. The coefficients $a_k$ in the wavelet expansion are given explicitly by $a_0=f(0)$, and for $k>0$ by the formula
	\[
		a_k=f(k)-f(k_-).
	\]
	Here, $k_-:=\sum_{i=0}^{l(k)-1}b_ip^i$ if $k$ has the $p$-adic expansion $k=\sum_{i=0}^{l(k)}b_ip^i$.
\end{prop}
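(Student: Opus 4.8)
The plan is to bypass the abstract orthonormality of the wavelets and instead compute the partial sums of $\sum_k a_k\chi_k$ directly. Set $a_0:=f(0)$ and $a_k:=f(k)-f(k_-)$ for $k>0$ as in the statement, recalling that $l(k)$ is the number of $p$-adic digits of $k$ and that $k_-$ is obtained from $k$ by deleting its leading digit; for $n\ge 0$ put $f_n:=\sum_{0\le k<p^n}a_k\chi_k$, the sum running over exactly the $k$ with $l(k)\le n$. The key claim is that $f_n$ is the locally constant function
\[
  f_n(x)=f\bigl(x^{(n)}\bigr),\qquad\text{where } x=\sum_{i\ge 0}b_ip^i\in\Zp \text{ and } x^{(n)}:=\sum_{i=0}^{n-1}b_ip^i.
\]
Granting this, convergence and uniqueness follow with little effort, so essentially all of the content lies in this identity.

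To establish it, note first that $\chi_k(x)=1$ holds precisely when $x\equiv k\pmod{p^{l(k)}}$, i.e.\ when $k$ equals the truncation of $x$ to its leading $l(k)$ digits; since $0\le k<p^{l(k)}$, this means $k=x^{(l(k))}$. Hence, as $k$ runs over $0\le k<p^n$, the $k$ with $\chi_k(x)=1$ are exactly the distinct elements among $x^{(0)},x^{(1)},\dots,x^{(n)}$. Moreover, for an index $j$ with $b_{j-1}\ne 0$ one has $l\bigl(x^{(j)}\bigr)=j$ and $\bigl(x^{(j)}\bigr)_-=x^{(j-1)}$, whereas $b_{j-1}=0$ forces $x^{(j)}=x^{(j-1)}$. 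Plugging the formula for $a_k$ into $f_n(x)$ and collapsing the repeated truncations therefore produces a telescoping sum,
\[
  f_n(x)=f(0)+\sum_{\substack{1\le j\le n\\ b_{j-1}\ne 0}}\bigl(f(x^{(j)})-f(x^{(j-1)})\bigr)=f(0)+\sum_{j=1}^{n}\bigl(f(x^{(j)})-f(x^{(j-1)})\bigr)=f\bigl(x^{(n)}\bigr),
\]
the middle equality because the suppressed terms vanish and the last because $x^{(0)}=0$. I expect this bookkeeping — listing the wavelets supported at a fixed $x$ and checking that the sum collapses — to be the only genuinely delicate point; everything else is routine.

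The remaining steps are soft. Since $\Zp$ is compact, $f$ is uniformly continuous, and $|x-x^{(n)}|_p\le p^{-n}$ uniformly in $x$, so $f_n\to f$ in the supremum norm, which is precisely the statement that $\sum_{k\ge 0}a_k\chi_k$ converges to $f$ in $C(\Zp,\Qp)$. The same uniform continuity applied to $|k-k_-|_p\le p^{-(l(k)-1)}\to 0$ shows $\|a_k\chi_k\|_\infty=|a_k|_p\to 0$, so the series converges unconditionally rather than only along the indices $p^n$. For uniqueness, assume $f=\sum_k c_k\chi_k$ with $c_k\to 0$: evaluating at $0$ gives $c_0=f(0)$ because $\chi_k(0)=1$ only for $k=0$, and for $k>0$ the description of supports above shows that $\{j:\chi_j(k)=1\}$ and $\{j:\chi_j(k_-)=1\}$ differ by exactly the single index $k$, whence $c_k=f(k)-f(k_-)=a_k$. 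This determines the coefficients uniquely and finishes the proof.
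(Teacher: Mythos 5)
Your proof is correct. The paper itself gives no argument for this proposition --- it is quoted verbatim from Schikhof (Thm.\ 62.2) as a known fact --- so there is nothing internal to compare against; your direct computation of the partial sums $f_n=\sum_{0\le k<p^n}a_k\chi_k$, the identification of the wavelets supported at a given $x$ with the distinct truncations $x^{(j)}$, and the telescoping that yields $f_n(x)=f\bigl(x^{(n)}\bigr)$ is exactly the standard proof of the van der Put basis theorem, and your convergence and uniqueness steps are sound. One small remark: the statement as printed says ``every function $f\colon\Zp\rightarrow\Qp$,'' but (as your use of uniform continuity makes clear, and as the surrounding text about an orthonormal basis of $C(\Zp,\Qp)$ confirms) continuity of $f$ is needed and intended; your reading is the right one.
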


The wavelet expansion can be used to compute Volkenborn integrals:
\begin{prop}[{\cite[Ex. 62.E.]{schikhof}}]
	If $f\colon\Zp\rightarrow \Qp$ is continuously differentiable with wavelet expansion $f=\sum_{k\geq 0} a_k \chi_k$, then
	\[
		\int_{\Zp}f(t) \mathrm{d}t= \sum_{k\geq 0} a_k p^{-l(k)}.
	\]
\end{prop}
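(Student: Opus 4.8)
The plan is to reduce the assertion to an exact evaluation of the finite averages $\frac{1}{p^n}\sum_{0\le j<p^n}f(j)$ that define the Volkenborn integral, and then to pass to the limit. The reason one cannot argue more softly is that the Volkenborn integral is \emph{not} continuous for the supremum norm, so one may not integrate the wavelet series $f=\sum_{k\ge 0}a_k\chi_k$ term by term and invoke uniform convergence; one must instead exploit the rigid combinatorial shape of the blocks $[0,p^n)$. As a warm-up, I would integrate a single wavelet: $\int_{\Zp}\chi_k\,\mathrm{d}t=p^{-l(k)}$. Indeed $\chi_k$ is the indicator of the ball $k+p^{l(k)}\Zp$, so for $n\ge l(k)$ the set $\{\,0\le j<p^n:j\equiv k\pmod{p^{l(k)}}\,\}$ has exactly $p^{\,n-l(k)}$ elements, whence $\frac{1}{p^n}\sum_{0\le j<p^n}\chi_k(j)=p^{-l(k)}$ for all large $n$ and the defining sequence is eventually constant. (This is just the statement that the Volkenborn integral of the indicator of a ball equals its Haar measure.)

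\textbf{Decay of the wavelet coefficients.} Since $f$ is continuously differentiable, the difference quotient $\Phi_1f(x,y)=\frac{f(x)-f(y)}{x-y}$ is bounded on $\Zp\times\Zp$, say by $C$, so $|f(x)-f(y)|_p\le C\,|x-y|_p$. Because $k_-$ is obtained from $k$ by deleting its leading $p$-adic digit, $|k-k_-|_p=p^{\,1-l(k)}$, and therefore $|a_k|_p=|f(k)-f(k_-)|_p\le C\,p^{\,1-l(k)}$ for $k>0$. Hence $|a_k\,p^{-l(k)}|_p\le C\,p^{\,1-2l(k)}\to 0$ as $k\to\infty$ (because $l(k)\to\infty$), so the series $\sum_{k\ge 0}a_k\,p^{-l(k)}$ converges in $\Qp$ and the right-hand side of the claimed identity is meaningful.

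\textbf{Exact form of the partial averages.} For each $n$,
\[
	\sum_{0\le j<p^n}f(j)=\sum_{0\le j<p^n}\sum_{k\ge 0}a_k\chi_k(j)=\sum_{k\ge 0}a_k\Big(\sum_{0\le j<p^n}\chi_k(j)\Big),
\]
the interchange being a regrouping of finitely many convergent series in the complete field $\Qp$ (each inner series converges to $f(j)$ by the wavelet expansion). Using $p^{\,l(k)-1}\le k<p^{\,l(k)}$ one checks that $\sum_{0\le j<p^n}\chi_k(j)$ equals $p^{\,n-l(k)}$ when $l(k)\le n$ and $0$ when $l(k)>n$; equivalently, the $k$-th term survives precisely when $k<p^n$. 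Dividing by $p^n$ yields the clean identity
\[
	\frac{1}{p^n}\sum_{0\le j<p^n}f(j)=\sum_{0\le k<p^n}a_k\,p^{-l(k)}.
\]
Letting $n\to\infty$, the left-hand side tends to $\int_{\Zp}f(t)\,\mathrm{d}t$ by definition, while the right-hand side runs through a cofinal sequence of partial sums of the convergent series of the previous step; the two limits agree, which is the desired formula.

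\textbf{Main obstacle.} The single delicate point is exactly the one flagged at the start: the failure of continuity of the Volkenborn integral for the uniform norm forbids any argument treating $\sum_{k}a_k\chi_k$ as a generic uniformly convergent series. The computation of the partial averages sidesteps this by evaluating the block sums on the nose — the arithmetic miracle being that summing a wavelet over $[0,p^n)$ returns only $0$ or $p^{\,n-l(k)}$, so the average is literally a finite truncation of the target series — and continuous differentiability then enters only through the crude coefficient bound.
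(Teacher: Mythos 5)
The paper gives no proof of this proposition at all --- it is quoted directly from Schikhof --- so there is nothing internal to compare against; your argument has to stand on its own. Its core, the exact identity
\[
	\frac{1}{p^n}\sum_{0\leq j<p^n}f(j)=\sum_{0\leq k<p^n}a_k\,p^{-l(k)},
\]
is correct and is indeed the right way to prove the statement: the block count $\#\{0\leq j<p^n: j\equiv k \bmod p^{l(k)}\}=p^{n-l(k)}$ for $l(k)\leq n$ (and $=0$ otherwise) is verified correctly, and the interchange of the finite outer sum with the convergent wavelet series is legitimate.

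However, your ``Decay of the wavelet coefficients'' step contains a genuine error. You have $|a_k|_p\leq C\,p^{1-l(k)}$, which is fine, but $|p^{-l(k)}|_p=p^{+l(k)}$, not $p^{-l(k)}$; so the bound you actually get is $|a_k\,p^{-l(k)}|_p\leq C\,p$, mere boundedness, not decay. And this is not a reparable slip in the estimate: for $f(x)=x$ one has $a_k=k-k_-=b\,p^{l(k)-1}$ with leading digit $b$ prime to $p$, so $|a_k\,p^{-l(k)}|_p=|b/p|_p=p$ for infinitely many $k$, and the series $\sum_k a_k p^{-l(k)}$ does \emph{not} converge term by term in $\Qp$. (Its $p^n$-th partial sums are $\frac{p^n-1}{2}\to -\frac12=\int_{\Zp}t\,\mathrm{d}t$, consistent with the proposition, but the full sequence of partial sums is not Cauchy.) Consequently your closing sentence --- that the $p^n$-truncations form ``a cofinal sequence of partial sums of the convergent series'' --- rests on a false premise. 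The fix is to read the right-hand side of the proposition as $\lim_{n\to\infty}\sum_{k<p^n}a_kp^{-l(k)}$ (equivalently, the series grouped by the value of $l(k)$), and to get convergence of these particular partial sums not from termwise decay but from the Volkenborn integrability of $C^1$ functions (the left-hand side of your identity converges by \cite[\S 55]{schikhof}, hence so does the right). With that reinterpretation your main computation already finishes the proof; as written, the convergence claim and the final limiting argument are wrong.
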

Let us write $C^1(\Zp,\Qp)$ for the space of continuously differentiable functions and $\nu_p(\cdot)$ for the $p$-adic valuation normalized by $\nu_p(p)=1$. Let us define
\[
	\Delta(f):=\inf_{k\geq 0}  \left( \nu_p(a_k)-l(k)\right)
\]
for $f=\sum_{k\geq 0}a_k\chi_k$. With this definition, we have the following immediate corollary:
\begin{cor}\label{cor_Volkint}
	Let $f,g\in C^1(\Zp,\Qp)$ with $\Delta(f-g)>n\in\ZZ$. Then
	\[
		\int_{\Zp}f(t)\mathrm{d}t\equiv \int_{\Zp}g(t)\mathrm{d}t\mod p^{n}\Zp.
	\]
\end{cor}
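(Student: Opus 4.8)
The plan is to reduce the statement to the linearity of the Volkenborn integral together with the explicit formula for it in terms of wavelet coefficients. First I would set $h:=f-g$, which again lies in $C^1(\Zp,\Qp)$ since this space is a $\Qp$-vector space, and write $h=\sum_{k\geq 0}c_k\chi_k$ for its wavelet expansion. Using the explicit description of wavelet coefficients in \Cref{prop_wavelet_coeff} (or simply uniqueness of the wavelet expansion together with linearity), one sees that the coefficients of $h$ are $c_k=a_k-b_k$, where $f=\sum_{k\geq 0}a_k\chi_k$ and $g=\sum_{k\geq 0}b_k\chi_k$; in particular $\inf_{k\geq 0}(\nu_p(c_k)-l(k))=\Delta(f-g)>n$.

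Next I would apply the formula $\int_{\Zp}h(t)\,\mathrm{d}t=\sum_{k\geq 0}c_k p^{-l(k)}$ from the preceding proposition, which is available because $h\in C^1(\Zp,\Qp)$. The hypothesis $\Delta(h)>n$ means exactly that $\nu_p(c_k p^{-l(k)})=\nu_p(c_k)-l(k)>n$, hence $\geq n+1$ (all these valuations being integers), for every $k\geq 0$. Consequently every partial sum of the convergent series $\sum_{k\geq 0}c_k p^{-l(k)}$ lies in the closed subgroup $p^{n+1}\Zp$ of $(\Qp,+)$, and therefore so does its limit $\int_{\Zp}h(t)\,\mathrm{d}t$.

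Finally, by linearity of the Volkenborn integral, $\int_{\Zp}f(t)\,\mathrm{d}t-\int_{\Zp}g(t)\,\mathrm{d}t=\int_{\Zp}h(t)\,\mathrm{d}t\in p^{n+1}\Zp\subseteq p^{n}\Zp$, which is the asserted congruence. There is essentially no genuine obstacle here, the result being a formal consequence of the two preceding propositions; the only point deserving a word of justification is that the limit of a convergent series of elements of $p^{n+1}\Zp$ stays in $p^{n+1}\Zp$, which holds because $\Qp$ is complete and $p^{n+1}\Zp$ is closed.
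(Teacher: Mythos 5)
Your proof is correct and is precisely the argument the paper has in mind: the paper labels this an ``immediate corollary'' of the formula $\int_{\Zp}f\,\mathrm{d}t=\sum_{k\geq 0}a_kp^{-l(k)}$ and gives no further details, and your reduction to $h=f-g$ with termwise valuation bounds is exactly that omitted verification. Your observation that one in fact lands in $p^{n+1}\Zp$ (since the valuations involved are integers) is a harmless strengthening of the stated congruence modulo $p^n\Zp$.
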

Let us summarize some basic properties of $\Delta(f)$:
\begin{lem}\label{lem_df}
Let $f,g\in C^1(\Zp,\Qp)$. The function $\Delta$ satisfies the following properties:
\begin{enumerate}
\item  $\Delta(f+g)\geq \min(\Delta(f),\Delta(g))$.
\item $\Delta(f\cdot g)\geq \min(\Delta(f),\Delta(g))$
\item For $l\geq 0$ let $[p^l]\colon \Zp\rightarrow\Zp$ denote multiplication by $p^l$. If $\nu_p(f(0))\geq \Delta(f)+l$, then
\[
	\Delta(f\circ [p^l])\geq \Delta(f)+l.
\]
\item If $f\equiv g \mod p$, then
\[
	\Delta(f^p-g^p)\geq \Delta(f-g)+1.
\]
\item For $m\geq 0$, we have
\[
	\Delta(\binom{x}{m})\geq -l(m).
\]
Here, we write $\binom{x}{m}$ for the function $x\mapsto \binom{x}{m}$.
\item Let $f=\sum_{k\geq 0}{c_k}t^k\in \Qp\llbracket t\rrbracket$ be a power series which converges on $\Zp$, i.e. $\lim_{k\rightarrow\infty} |c_k|_p=0$. Let us view $f$ as a continuous function on $\Zp$ with values in $\Qp$. Then
\[
	\Delta(f)\geq \inf_k \nu_p(c_k).
\]
\end{enumerate}
\end{lem}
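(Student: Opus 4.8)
The plan is to reduce all six items to one reformulation of $\Delta$ in terms of the values of $f$. First I would unwind \cref{prop_wavelet_coeff}: writing $k=\sum_{i=0}^{d}b_ip^i$ with $b_d\neq 0$ one has $l(k)=d+1$, the truncation $k_-$ is $k$ with the top term $b_dp^d$ removed, and hence $\nu_p(k-k_-)=d=l(k)-1$. I would then prove the criterion: for $f\in C^1(\Zp,\Qp)$ and $c\in\ZZ$,
\[
	\Delta(f)\geq c \quad\Longleftrightarrow\quad \nu_p(f(0))\geq c \ \text{ and }\ \nu_p\bigl(f(a)-f(b)\bigr)\geq c+1+\nu_p(a-b)\ \ \forall\, a\neq b\in\Zp .
\]
The implication ``$\Leftarrow$'' is immediate from $a_0=f(0)$, $a_k=f(k)-f(k_-)$ and $\nu_p(k-k_-)=l(k)-1$. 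For ``$\Rightarrow$'' I would fix $a\neq b$, put $j=\nu_p(a-b)$, and telescope $f(a)-f(a\bmod p^j)$ along the prefixes $a^{(i)}:=a\bmod p^i$: each nonzero step $f(a^{(i+1)})-f(a^{(i)})$ is precisely a wavelet coefficient of level $i+1$, so has valuation $\geq c+(i+1)\geq c+j+1$; the same holds for $b$, and $a^{(j)}=b^{(j)}$, whence $\nu_p(f(a)-f(b))\geq c+j+1$. Passage from $\ZZ_{\geq 0}$ to $\Zp$ is by density and continuity. Two consequences I would note at once are $\Delta(f)\leq\inf_{x\in\Zp}\nu_p(f(x))$ and $\Delta(cf)=\nu_p(c)+\Delta(f)$ for $c\in\Qp$.

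Granting the criterion, (a) is the ultrametric inequality applied to the wavelet coefficients of $f+g$ term by term. For (b) I would write $f(a)g(a)-f(b)g(b)=f(a)\bigl(g(a)-g(b)\bigr)+g(b)\bigl(f(a)-f(b)\bigr)$ and combine the criterion for $f$ and for $g$ with the bound $\nu_p(f(a)),\nu_p(g(b))\geq\min(\Delta f,\Delta g)$ coming from the consequence above. For (c), observe $(f\circ[p^l])(a)-(f\circ[p^l])(b)=f(p^la)-f(p^lb)$ with $\nu_p(p^la-p^lb)=l+\nu_p(a-b)$, so the second half of the criterion holds for $f\circ[p^l]$ with constant $\Delta(f)+l$, while the first half is exactly the hypothesis $\nu_p(f(0))\geq\Delta(f)+l$.

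For (d) I would expand $f^p-g^p=\sum_{j=1}^{p}\binom pj g^{p-j}(f-g)^j$: for $1\leq j\leq p-1$ the coefficient $\binom pj$ contributes a factor $p$, and the decisive term $j=p$ is handled pointwise by the elementary fact that $x\equiv y\pmod p$ implies $\nu_p(x^p-y^p)\geq 1+\nu_p(x-y)$, which through the criterion upgrades $\Delta(f-g)$ to $\Delta\bigl((f-g)^p\bigr)\geq\Delta(f-g)+1$; (a) and (b) then assemble the pieces. For (e) I would estimate the wavelet coefficients of $x\mapsto\binom xm$ directly: by Vandermonde's identity $\binom km-\binom{k_-}m=\sum_{j=1}^{m}\binom{k-k_-}{j}\binom{k_-}{m-j}$, and since $\nu_p\binom hj\geq\nu_p(h)-\nu_p(j)$ (from $j\binom hj=h\binom{h-1}{j-1}$) with $\nu_p(k-k_-)=l(k)-1$ and $\nu_p(j)\leq l(m)-1$ for $1\leq j\leq m$, one gets $\nu_p(a_k)\geq l(k)-l(m)$, i.e. $\Delta(\binom xm)\geq -l(m)$. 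For (f), I would likewise use $a_k=f(k)-f(k_-)=\sum_j c_j(k^j-k_-^j)$ together with $\nu_p(k^j-k_-^j)\geq\nu_p(k-k_-)=l(k)-1$ to bound $\nu_p(a_k)-l(k)$ below in terms of $\inf_j\nu_p(c_j)$.

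The step I expect to cost the most effort is setting up the criterion cleanly: getting the indexing of $l(k)$ and $k_-$ and the digit-prefix telescoping exactly right, and in particular pinning down the correct shift (the ``$+1$''), which propagates into (d), (e) and (f). Once the criterion is in place, (a)--(c) are essentially one line each, (e) and (f) are short coefficient estimates, and (d) is the only item needing some bookkeeping with the binomial expansion and the congruence for $p$-th powers.
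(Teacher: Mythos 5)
Your route is genuinely different from the paper's: you derive everything from a single ``values criterion'' ($\Delta(f)\geq c$ iff $\nu_p(f(0))\geq c$ and $\nu_p(f(a)-f(b))\geq c+1+\nu_p(a-b)$ for all $a\neq b$), whereas the paper argues wavelet coefficient by wavelet coefficient in each item. The criterion itself is correct --- your digit-prefix telescoping is the right argument, and your reading $\nu_p(k-k_-)=l(k)-1$ is the intended (Schikhof) convention despite the indexing slip in the statement of \Cref{prop_wavelet_coeff} --- and it gives clean proofs of (a) and (c); your Vandermonde computation for (e) is a nice self-contained replacement for the paper's citation of de Shalit.

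Two steps do not close, however. In (b), your cross term gives $\nu_p\!\left(f(a)(g(a)-g(b))\right)\geq \Delta(f)+\Delta(g)+1+\nu_p(a-b)$, and $\Delta(f)+\Delta(g)\geq\min(\Delta(f),\Delta(g))$ only when the larger of the two is nonnegative; what your argument actually proves is $\Delta(fg)\geq\min\left(\mu(f)+\Delta(g),\,\mu(g)+\Delta(f)\right)$ with $\mu(h):=\inf_{x}\nu_p(h(x))$. This is not a presentational issue: for $f=g=p^{-5}\chi_1$ one has $\Delta(f)=-6$ but $fg=p^{-10}\chi_1$ has $\Delta(fg)=-11$, so (b) as stated needs an integrality hypothesis (the paper's own proof silently inserts ``$a_k,b_i\in\Zp$'' at its reduction step). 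The defect propagates into your (d): the terms $\binom{p}{j}g^{p-j}(f-g)^j$ for $1\leq j\leq p-1$ only yield $1+\min(\Delta(g),\Delta(f-g))$ via (b), not the required $1+\Delta(f-g)$; to recover the stated conclusion you need the refined product bound together with $\mu(g)\geq 0$ and $\mu(f-g)\geq 1$ (your pointwise treatment of the $j=p$ term is fine). Finally, in (f) your own estimate gives $\nu_p(a_k)\geq\inf_j\nu_p(c_j)+\nu_p(k-k_-)=\inf_j\nu_p(c_j)+l(k)-1$, hence only $\Delta(f)\geq\inf_j\nu_p(c_j)-1$; the missing unit cannot be recovered, since $f(t)=t$ has $\Delta(f)=-1<0$. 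You correctly flagged the ``$+1$'' shift as the delicate point, but your argument lands on a constant one less than the one stated (the paper's proof asserts $\nu_p(a_k)\geq l(k)+\nu_p(c)$ at the corresponding spot without justification), so either the exponents downstream must absorb this loss or the statement must be weakened accordingly.
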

\begin{proof}
$(a)$ Let $f=\sum_{k\geq 0}a_k\chi_k$, $g=\sum_{k\geq 0} b_k\chi_k$, then
\begin{align*}
	\Delta(f+g)&:=\inf \{\nu_p(a_k+b_k)-l(k), k\in\ZZ_{\geq 0}\}\\
	&\geq \inf \{\min(\nu_p(a_k),\nu_p(b_k))-l(k), k\in\ZZ_{\geq 0}\}\\
	&\geq \min(\Delta(f),\Delta(g)).
\end{align*}
$(b)$ It is enough to consider the case $f=a_k\chi_k$, $g=b_i\chi_i$ with $i\leq k$ and $a_k,b_i\in \Zp$. Then
\[
	f\cdot g=\begin{cases} a_kb_i \chi_k, & \text{ if }k\equiv i \mod p^{l(i)}\\
		0, & \text{ if }k\not\equiv i \mod p^{l(i)}.  \end{cases}
\]
We deduce
\begin{align*}
	\Delta(f\cdot g)&=\begin{cases} \nu_p(a_k)+\nu_p(b_i)- l(k), & \text{ if }k\equiv i \mod p^{l(i)}\\
		\infty, & \text{ if }k\not\equiv i \mod p^{l(i)}  \end{cases}\\
		&\geq \min(\Delta(f),\Delta(g)).
\end{align*}
$(c)$ We have
\[
	\chi_k\circ[p^l]=\begin{cases} 0, & \text{ if }k\not\equiv 0 \mod p^l\\ \chi_{\frac{k}{p^l}}, & \text{ if }k\equiv 0 \mod p^l.\end{cases}
\]
Thus for $f=\sum_{k\geq 0}a_k\chi_k$, we get the formula
\[
	f\circ[p^l]=\sum_{k\geq 0}a_{p^lk}\chi_{k}.
\]
The definition of $l(\cdot)$ implies, for $k>0$, the property $l(p^lk)=l+l(k)$. The inequality
\[
	\nu_p(a_{p^lk})-l(p^lk)\geq \Delta(f)
\]
shows for $k>0$:
\[
	\nu_p(a_{p^lk})-l(k)\geq \Delta(f)+l.
\]
For $k=0$, we have $\nu_p(a_0)\geq \Delta(f)+l$, by assumption, and the claim follows.\par
$(d)$ Let
\[
	h:=\frac{f-g}{p}\in C(\Zp,\Zp).
\]
Then $\Delta(h)\geq \min(\Delta(\frac{f}{p}),\Delta(\frac{g}{p}))\geq \min(\Delta(f),\Delta(g))-1$. The binomial formula gives
\[
	f^p-g^p=\sum_{i=1}^p \binom{p}{i}p^ih^ig^{p-i}.
\]
Since $\nu_p(\binom{p}{i}p^i)\geq 2$ holds for $1\leq i\leq p$, we get
\[
	\Delta(f^p-g^p)\geq 2+\min_{1\leq i\leq p} \Delta(h^ig^{p-i})\geq 1+\min(\Delta(f),\Delta(g)).
\]
$(e)$ In \cite[Prop. 2.5]{deshalit}, it is shown that
\[
	\binom{x}{m}=\sum_{k\geq 0} b_{m,k}\chi_k
\]
with $\nu_p(b_{m,k})\geq l(k)-l(m)$ and the claim follows.\par
$(f)$ It is enough to prove
\[
	\Delta(ct^n)\geq \nu_p(c).
\]
The formula for the wavelet coefficients, in \Cref{prop_wavelet_coeff}, shows $ct^n=\sum_{k\geq 0} a_k \chi_k(t)$ with $a_k=c\cdot(k^n-(k_-)^n)$. From this, we deduce
\[
	\nu_p(a_k)\geq l(k)+\nu_p(c)
\]
and $\Delta(ct^n)\geq \nu_p(c)$ follows.
\end{proof}

\section{\texorpdfstring{Linear forms in $p$-adic Hurwitz zeta values with related coefficients}{Linear forms in p-adic Hurwitz zeta values with related coefficients}}\label{sec_1}
The aim of this section is the construction of linear forms in $p$-adic $L$-values using Volkenborn integration over certain rational functions.\par

Let us start recalling the definition of the Hurwitz zeta function. For a positive real number $x>0$, the \emph{Hurwitz zeta function} is defined by the convergent series
\[
	\zeta(s,x):=\sum_{n\geq 0}\frac{1}{(x+n)^s},\quad \text{ for } \Re(s)>1,
\]
and can be continued to a holomorphic function on $\CC\setminus\{1\}$ with a simple pole at $s=1$. Its values at the negative integers can be described in terms of the Bernoulli polynomials $B_n(x)$. More precisely, the value at $1-n$ is given by the formula
\[
	\zeta(1-n,x)=-\frac{B_{n}(x)}{n},\quad n\in\ZZ_{\geq 1}, x\in [0,1].
\]
In particular, note that for $n\in\ZZ_{\geq 1}$ and $x\in[0,1]\cap \QQ$ the value $\zeta(1-n,x)$ is rational. For $x\in\QQ$ with large $p$-adic norm, it is possible to interpolate the values of the Hurwitz zeta function at negative integers $p$-adically. For the prime $p$, define
\[
	q_p:=\begin{cases} p, & \text{if } p\neq 2\\ 4, & \text{if }  p=2. \end{cases}
\]
Let us write $\nu_p\colon \Qp\rightarrow \ZZ$ for the $p$-adic valuation with the normalization $\nu_p(p)=1$ and $|x|_p:=p^{-\nu_p(x)}$ for the $p$-adic norm on $\Qp$. The units $\Zp^\times$ of the $p$-adic integers decompose canonically
\[
	\Zp^\times \righteq \mu_{\phi(q_p)}(\Zp)\times (1+q_p\Zp).
\]
Here, $\mu_n(R)$ denotes the group of $n$-th roots of unity in a ring $R$ and $\phi(n)$ denotes Euler's totient function. The canonical projection
\[
	\omega \colon \Zp^\times \rightarrow \mu_{\phi(q_p)}(\Zp)
\]
is called the Teichm\"uller character. Let us extend the Teichm\"uller character to a map
\[
	\Qp^\times\rightarrow \Qp^\times,
\]
 by setting
\[
	\omega(x):=p^{\nu_p(x)}\omega(x/p^{\nu_p(x)}),
\]
and define $\langle x\rangle:=\frac{x}{\omega(x)}$ for $x\in\Qp^\times$. Let $x\in\Qp$ be given with $|x|_p\geq q_p$. It can be shown that there is a unique $p$-adic meromorphic function $\zeta_p(s,x)$ on
\[
	\{s\in\Cp\mid |s|_p<q_p p^{-1/(p-1)}\}
\]
such that
\[
	\zeta_p(1-n,x)=-\omega(x)^{-n}\frac{B_n(x)}{n},
\]
see \cite[\S 11.2]{cohen2}. Explicitly, the function $\zeta_p(s,x)$ can be defined by a Volkenborn integral, see \cite[Def. 11.2.5]{cohen2}:
\begin{equation}\label{eq_zeta_Volkenborn}
	\zeta_p(s,x)=\frac{1}{s-1}\int_{\Zp}\langle x+t\rangle^{1-s}\mathrm{d}t.
\end{equation}
An important source of linear forms in values of the classical Riemann zeta function is provided by summation of rational functions of degree less than $-1$, having only poles of maximal order $s$, over the positive integers. The following lemma together with the partial fraction decomposition of rational functions suggests that Volkenborn integration over rational functions is a good substitute for summation in the $p$-adic case.
\begin{lem}\label{lem_zeta_Volkenborn}
	Let $s>1$ be an integer and $x\in\Qp$ with $|x|_p \geq q_p$, then
	\[
		\omega(x)^{1-s}\zeta_p(s,x)=\frac{1}{s-1}\int_{\Zp} (x+t)^{1-s} \mathrm{d}t.
	\]
\end{lem}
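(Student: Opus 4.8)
The plan is to deduce the identity directly from the definition \eqref{eq_zeta_Volkenborn} of $\zeta_p(s,x)$ together with a pointwise comparison of the two integrands. Since $\zeta_p(s,x)=\frac{1}{s-1}\int_{\Zp}\langle x+t\rangle^{1-s}\,\mathrm{d}t$ and the Volkenborn integral is $\Qp$-linear, it suffices to show that $\omega(x)^{1-s}\langle x+t\rangle^{1-s}=(x+t)^{1-s}$ for every $t\in\Zp$; pulling the constant $\omega(x)^{1-s}$ into the integral in \eqref{eq_zeta_Volkenborn} then gives the claim. As $\langle y\rangle=y/\omega(y)$ by definition, this in turn reduces to the single assertion
\[
	\omega(x+t)=\omega(x)\qquad\text{for all }t\in\Zp .
\]

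To prove this, first observe that $|x|_p\geq q_p>1\geq|t|_p$ forces $\nu_p(x+t)=\nu_p(x)=:m$; in particular $x+t\neq 0$, and $m\leq -1$ for odd $p$, while $m\leq -2$ for $p=2$. Write $x=p^m u$ with $u\in\Zp^\times$, so that $(x+t)p^{-m}=u+p^{-m}t$. Because $-m\geq 1$ for odd $p$ and $-m\geq 2$ for $p=2$, we have $p^{-m}t\in q_p\Zp$, hence $(x+t)p^{-m}\equiv u\pmod{q_p\Zp}$. The Teichm\"uller character on $\Zp^\times$ is trivial on $1+q_p\Zp$, so $\omega\bigl((x+t)p^{-m}\bigr)=\omega(u)$. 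By the definition of the extension of $\omega$ to $\Qp^\times$ this yields $\omega(x+t)=p^m\omega\bigl((x+t)p^{-m}\bigr)=p^m\omega(u)=\omega(x)$, as desired. Feeding this back, $\langle x+t\rangle=(x+t)/\omega(x)$, whence $(x+t)^{1-s}=\omega(x)^{1-s}\langle x+t\rangle^{1-s}$ and the lemma follows.

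I do not expect a genuine obstacle here; the only points requiring care are bookkeeping ones. The role of the hypothesis $|x|_p\geq q_p$ (rather than merely $|x|_p>1$) is precisely to secure $p^{-m}t\in q_p\Zp$ in the case $p=2$, where one needs $\nu_2(x)\leq -2$. One should also note that for an integer $s>1$ the $p$-adic power $\langle x+t\rangle^{1-s}$, defined via $\exp\!\bigl((1-s)\log\langle x+t\rangle\bigr)$ and convergent because $\langle x+t\rangle\in 1+q_p\Zp$, coincides with the naive reciprocal power $1/\langle x+t\rangle^{\,s-1}$, so the manipulation of exponents is harmless. Finally, no separate integrability check is needed: the function $t\mapsto (x+t)^{1-s}$ differs from the integrand of \eqref{eq_zeta_Volkenborn} only by the constant factor $\omega(x)^{1-s}$, and is therefore Volkenborn integrable.
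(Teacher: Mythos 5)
Your proof is correct and follows essentially the same route as the paper: the whole lemma reduces to the translation invariance $\omega(x+t)=\omega(x)$, after which one pulls the constant $\omega(x)^{1-s}$ through the Volkenborn integral (the paper does this at the level of the Riemann sums $p^{-r}\sum_{0\leq m<p^r}$, using $\omega(x+m)=\omega(x)$ for integers $m$, which is the same observation). The only difference is that you supply the short ultrametric argument for $\omega(x+t)=\omega(x)$, which the paper merely asserts.
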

\begin{proof}
For $m\in\ZZ$ and $x\in\Qp$ with $|x|_p\geq q_p$, we get $\omega(x)=\omega(x+m)$. Unravelling the definition of the Volkenborn integral in formula \eqref{eq_zeta_Volkenborn} gives
\[
	\zeta_p(s,x)=\frac{1}{s-1}\lim_{r\rightarrow\infty} p^{-r}\sum_{0\leq m<p^r} \langle x+m \rangle^{1-s},
\]
and we conclude
\begin{align*}
	\omega(x)^{1-s}\zeta_p(s,x)&=\omega(x)^{1-s}\frac{1}{s-1}\lim_{r\rightarrow\infty} p^{-r}\sum_{0\leq m<p^r} \langle x+m \rangle^{1-s}\\
	&=\frac{1}{s-1}\lim_{r\rightarrow \infty}p^{-r}\sum_{0\leq m<p^r} (x+m)^{1-s}.
\end{align*}
\end{proof}
For a Dirichlet character $\chi$ define $\delta=\delta(\chi)$ depending on the parity, i.e.
\[
	\delta:=\begin{cases} 0, & \text{if } \chi(-1)=1\\
	1, & \text{if } \chi(-1)=-1. \end{cases}
\]
The following lemma guarantees the non-vanishing of half of the generalized Bernoulli numbers:
\begin{lem}\label{lem_binomial_nontrivial}
	Let $k$ be a positive integer with $k\equiv \delta\mod 2$. Then $B_{k,\chi}\neq 0$.
\end{lem}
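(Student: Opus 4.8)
The plan is to use the analytic class-number-type formula for the generalized Bernoulli numbers together with the non-vanishing of the associated Dirichlet $L$-value. Recall that for a primitive Dirichlet character $\chi$ of conductor $f$ and a positive integer $k$ with $k\equiv\delta\pmod 2$, one has the classical evaluation
\[
	L(k,\chi)=(-1)^{1+(k-\delta)/2}\,\frac{(2\pi/f)^k}{2\cdot k!}\,\tau(\chi)\,B_{k,\bar\chi},
\]
where $\tau(\chi)$ is the Gauss sum. Since $\tau(\chi)\neq 0$, the quantity $B_{k,\bar\chi}$ vanishes if and only if $L(k,\bar\chi)$ vanishes. But for $k\geq 1$ the Dirichlet $L$-series $L(s,\bar\chi)$ converges (conditionally for $k=1$, absolutely for $k\geq 2$) and is a nonzero Euler product in the half-plane $\Re(s)\geq 1$; in particular $L(k,\bar\chi)\neq 0$. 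Hence $B_{k,\bar\chi}\neq 0$, and running the argument with $\chi$ in place of $\bar\chi$ (which has the same conductor and parity) gives $B_{k,\chi}\neq 0$. This disposes of the case where $\chi$ is primitive.

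For the general (possibly imprimitive) case, let $\chi$ be induced by the primitive character $\chi^\ast$ of conductor $f^\ast$, and recall the relation between the two families of generalized Bernoulli numbers,
\[
	B_{k,\chi}=B_{k,\chi^\ast}\prod_{q\mid f}\bigl(1-\chi^\ast(q)q^{k-1}\bigr),
\]
the product ranging over the primes $q$ dividing the modulus of $\chi$. Here $\delta(\chi)=\delta(\chi^\ast)$, so $k\equiv\delta(\chi^\ast)\pmod 2$ and the first part gives $B_{k,\chi^\ast}\neq 0$. It remains to check that none of the Euler factors $1-\chi^\ast(q)q^{k-1}$ vanishes: if it did, then $|\chi^\ast(q)q^{k-1}|=1$, forcing $k=1$ and $q=1$, which is impossible since $q$ is a prime. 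Therefore the whole product is nonzero and $B_{k,\chi}\neq 0$.

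The only subtle point is the non-vanishing of $L(1,\bar\chi)$ when $\delta=1$ (the odd case, where $k=1$ is allowed): this is exactly the non-vanishing statement at the edge of the critical strip that underlies Dirichlet's theorem on primes in arithmetic progressions, and I would simply invoke it. Everything else is bookkeeping with the functional equation / Gauss sum formula and the inflation relation for generalized Bernoulli numbers, both of which are standard (e.g. Washington, \emph{Introduction to Cyclotomic Fields}, Ch.~4). The main obstacle, such as it is, is purely organizational: making sure the parity condition is tracked correctly through the passage $\chi\rightsquigarrow\bar\chi\rightsquigarrow\chi^\ast$, since $L(k,\chi)$ is ``critical'' precisely when $k$ has the right parity, and it is the companion value $B_{k,\bar\chi}$ rather than $B_{k,\chi}$ that appears directly in the $L$-value formula.
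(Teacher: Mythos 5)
Your main argument is essentially the paper's: the paper simply cites the identity $L(1-k,\chi)=-B_{k,\chi}/k$ together with the non-vanishing of that $L$-value, which in turn rests on the functional equation and $L(k,\bar\chi)\neq 0$; you have just unwound the functional equation and quoted the explicit formula for $L(k,\chi)$ in terms of $\tau(\chi)$ and $B_{k,\bar\chi}$ directly. That part is fine (modulo a harmless slip: your displayed formula relates $B_{k,\bar\chi}$ to $L(k,\chi)$, not to $L(k,\bar\chi)$, but both $L$-values are nonzero so the conclusion stands), and correctly isolating $L(1,\cdot)\neq 0$ as the only genuinely nontrivial input is the right instinct.

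The imprimitive addendum, however, contains a real error. From $\chi^\ast(q)q^{k-1}=1$ you can only conclude $q^{k-1}=1$, hence $k=1$; it does not force $q=1$, and then $\chi^\ast(q)=1$ is perfectly possible. Indeed the extended statement you are proving is false: for the odd primitive character $\chi^\ast$ mod $4$ and $\chi$ the induced character mod $20$, one has $B_{1,\chi}=B_{1,\chi^\ast}\bigl(1-\chi^\ast(5)\bigr)=0$. This does not damage the paper, for two reasons: the lemma is stated for a character ``of conductor $d$,'' i.e.\ $B_{k,\chi}$ is attached to the primitive character, and it is only ever invoked with $k=2+\delta\geq 2$, where $q^{k-1}\geq 2$ makes every Euler factor nonzero. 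But as written, your claim that the general imprimitive case follows is wrong at $k=1$, and you should either restrict to primitive $\chi$ (as the paper implicitly does) or exclude $k=1$ from the imprimitive discussion.
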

\begin{proof}
This follows, for example, from the formula $L(1-k,\chi)=-\frac{B_{k,\chi}}{k}$ and the non-vanishing of the corresponding $L$-value.
\end{proof}

\begin{defin}
 For non-negative integers $m,n,m_1,\dots,m_k$ with $m\geq m_1+\dots+m_k$,
 \[
 	\binom{m}{m_1,\dots,m_k}:=\binom{m}{m_1}\binom{m-m_1}{m_2}\dots \binom{m-m_1-\dots-m_{k-1}}{m_k}
 \]
 denotes the \emph{multinomial coefficient}. It will be convenient to define 
 \[
 	\binom{m}{\underline{n}}:=\binom{m}{\underbrace{n,\dots,n}_{\left\lfloor\frac{m}{n}\right\rfloor\text{-times}}}.
 \]
 So $\binom{m}{\underline{n}}$ is the multinomial coefficients with as many $n$s as possible in the denominator.
\end{defin}

In the following, we fix the notation which will be used throughout the rest of the paper:
\begin{notation}\label{main_notation}
Let $\chi$ be a Dirichlet character of conductor
\begin{equation}
	d=d'p^{l_0},\quad \text{with } \gcd(d',p)=1.
\end{equation}
By \Cref{lem_binomial_nontrivial}, the Bernoulli number $B_{2+\delta,\chi}$ is non-zero, so
\begin{equation}\label{def_r}
	r:=\left\lfloor\nu_p\left(B_{2+\delta,\chi}\right)\right\rfloor+1,
\end{equation}
is a well-defined integer. Let $l\geq l_0$ be a positive integer and set 
\begin{equation}\label{def_QD}
Q=Q(l):=p^{r+l+1}, \text{ as well as }D=D(l):=d'p^l.
\end{equation}
The choice of $l$ will be specified later. For $n\in \ZZ_{>0}$, let us define
\begin{equation}\label{def_N}
	N(n):=p^l(p^{\lfloor \log_p(d'n) \rfloor+1}-1).
\end{equation}
Thus $N(n)$ is the least integer $\geq Dn$ of the form $p^l(p^m-1)$ for suitable $m\in\ZZ$. For positive integers $s,n$ with $s\geq pQD$, let us define
\begin{equation}\label{eq_def_Rn}
	R_{n}(t):=n!^s \binom{N(n)}{\underline{n}}^Q\binom{Dt+N(n)}{N(n)}^Q\frac{(Dt)^{2+\delta}}{(t)_{n+1}^s}.
\end{equation}
Here, we write $(t)_n=t(t+1)\dots(t+n-1)$ for the rising factorial.
\end{notation}

Of course, the definition of $R_n(t)$ does also depend on $s$, $l$ and $\chi$, but later these parameters will be fixed while $n$ tends to infinity. The partial fraction decomposition of $R_n(t)$ with respect to $t$ gives
	\[
		R_n(t)=\sum_{i=1}^{s}\sum_{k=0}^{n}\frac{r_{i,k}}{(t+k)^i},
	\]
	with
	\[
	r_{i,k}=\frac{1}{(s-i)!}\left . \left( \frac{\partial}{\partial t} \right)^{s-i} \left [ R_n(t)(t+k)^s\right ]\right |_{t=-k}.
	\]
	Using the coefficients in the partial fraction decomposition of $R_n$, we define
	\begin{align}
		\rho_{0,x}^{(n)}:&=-\sum_{i=1}^{s}\sum_{k=0}^{n}\sum_{\nu=0}^{k-1} i\cdot r_{i,k}\left(\nu+x\right)^{-i-1},\quad x\in\Qp\setminus\ZZ_{<0} \label{eq_def_rho1} \\
		\rho_i^{(n)}:&=\sum_{k=0}^n i\cdot r_{i,k},\quad 1\leq i\leq s. \label{eq_def_rho2}
	\end{align}
Note that $\rho_i^{(n)}$, for $1\leq i\leq s$, is a rational number which is independent of $x$. If $x\in\QQ\setminus\ZZ_{<0}$, then $\rho_{0,x}^{(n)}$ is also a rational number.
\begin{prop}\label{prop_linearforms}
Let $R_n$ be the rational function defined in \eqref{eq_def_Rn}. If $x\in\Qp$ with $|x|_p\geq p$ (resp. $|x|_2\geq 4$ if $p=2$), then
	\[
		\int_{\Zp} R_n(t+x) \mathrm{d}t=\rho^{(n)}_{0,x}+\sum_{i=1}^s \rho_i^{(n)}\omega(x)^{-i}\zeta_p\left(i+1,x\right).
	\]
	Here, $\rho_{0,x}^{(n)}$ and $\rho_{i}^{(n)}$ are the $p$-adic numbers defined in \eqref{eq_def_rho1} and \eqref{eq_def_rho2}. 
\end{prop}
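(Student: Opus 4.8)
The plan is to compute the Volkenborn integral $\int_{\Zp} R_n(t+x)\,\mathrm{d}t$ term by term, using the partial fraction decomposition $R_n(t)=\sum_{i=1}^s\sum_{k=0}^n r_{i,k}(t+k)^{-i}$. Shifting by $x$ gives $R_n(t+x)=\sum_{i=1}^s\sum_{k=0}^n r_{i,k}(t+x+k)^{-i}$, so by linearity of the Volkenborn integral it suffices to handle each summand $(t+x+k)^{-i}$ separately and then reassemble. The first step is therefore to verify that each function $t\mapsto (t+x+k)^{-i}$ is Volkenborn integrable on $\Zp$: this holds because $|x|_p\geq q_p>|t+k|_p$ for all $t\in\Zp$, $k\in\ZZ_{\geq 0}$, so $x+t+k$ is a unit of fixed valuation $\nu_p(x)$ and the function is analytic (indeed given by a convergent power series in $t$) on $\Zp$; analytic functions are Volkenborn integrable by \cite[\S 55]{schikhof}.

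Next I would reduce the integral of $(t+x+k)^{-i}$ to a $p$-adic Hurwitz zeta value via \Cref{lem_zeta_Volkenborn}. That lemma gives $\int_{\Zp}(y+t)^{1-s}\,\mathrm{d}t=(s-1)\,\omega(y)^{1-s}\zeta_p(s,y)$ whenever $|y|_p\geq q_p$. Here I want exponent $-i$, i.e. $1-s=-i$, so $s=i+1$; thus directly $\int_{\Zp}(x+k+t)^{-i}\,\mathrm{d}t = i\cdot\omega(x+k)^{-i}\zeta_p(i+1,x+k)$, valid since $|x+k|_p=|x|_p\geq q_p$. Now I must move the base point from $x+k$ back to $x$. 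For the Teichmüller factor this is immediate: $\omega(x+k)=\omega(x)$ since $|x|_p\geq q_p$ and $k\in\ZZ$ (this is exactly the observation used in the proof of \Cref{lem_zeta_Volkenborn}). For the zeta value I use the translation formula for the Volkenborn integral, \Cref{prop_volkenborn_transl}: applied to $f(u)=(x+u)^{-i}$ and $m=k$, it yields
\[
\int_{\Zp}(x+k+u)^{-i}\,\mathrm{d}u=\int_{\Zp}(x+u)^{-i}\,\mathrm{d}u+\sum_{\nu=0}^{k-1}f'(\nu)=i\cdot\omega(x)^{-i}\zeta_p(i+1,x)-i\sum_{\nu=0}^{k-1}(\nu+x)^{-i-1},
\]
using $f'(u)=-i(x+u)^{-i-1}$ and again \Cref{lem_zeta_Volkenborn} for the leading term.

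Finally I reassemble: summing over $i$ and $k$ with weights $r_{i,k}$,
\[
\int_{\Zp}R_n(t+x)\,\mathrm{d}t=\sum_{i=1}^s\sum_{k=0}^n r_{i,k}\Bigl(i\,\omega(x)^{-i}\zeta_p(i+1,x)-i\sum_{\nu=0}^{k-1}(\nu+x)^{-i-1}\Bigr)=\rho_{0,x}^{(n)}+\sum_{i=1}^s\rho_i^{(n)}\omega(x)^{-i}\zeta_p(i+1,x),
\]
recognizing the definitions \eqref{eq_def_rho1} and \eqref{eq_def_rho2}. One small bookkeeping point to check is that $R_n$ has degree $<-1$ (true since $\deg R_n = (2+\delta)-s(n+1)$ and $s\geq pQD$ is large), so the partial fraction decomposition has no polynomial part and the termwise manipulation captures all of $R_n(t+x)$; the poles of $R_n$ lie at $t\in\{0,-1,\dots,-n\}$, so after the shift the relevant base points $x+k$ all have $|x+k|_p=|x|_p\geq q_p$ and none of the analytic manipulations meet a singularity. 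The main (and only real) obstacle is the translation argument moving the base point from $x+k$ to $x$; everything else is linearity and direct appeal to \Cref{lem_zeta_Volkenborn} and \Cref{prop_volkenborn_transl}.
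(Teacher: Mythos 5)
Your proof is correct and follows essentially the same route as the paper: partial fraction decomposition plus linearity, the translation formula of \Cref{prop_volkenborn_transl} to move the base point from $x+k$ back to $x$, and \Cref{lem_zeta_Volkenborn} to identify $\int_{\Zp}(x+t)^{-i}\,\mathrm{d}t$ with $i\,\omega(x)^{-i}\zeta_p(i+1,x)$ (your intermediate detour through $\zeta_p(i+1,x+k)$ is harmless). One small slip in your bookkeeping: $\deg R_n = N(n)Q+2+\delta-s(n+1)$, not $(2+\delta)-s(n+1)$ — you dropped the degree-$N(n)Q$ contribution of $\binom{Dt+N(n)}{N(n)}^Q$ — but the conclusion that the degree is negative, hence that there is no polynomial part in the partial fraction decomposition, still holds since $N(n)<pDn$ and $s\geq pQD$.
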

\begin{proof}
Let us first observe that condition $|x|_p\geq p$ (resp. $|x|_2\geq 4$ if $p=2$) implies that $R_n(t+x)$ is a rational function without poles in $\Zp$. In particular, $R_n(t+x)$ is continuously differentiable on $\Zp$. 
	This allows us to compute the Volkenborn integral as follows:
	\begin{align*}
		\int_{\Zp} R_n(t+x) \mathrm{d}t&=\int_{\Zp}\sum_{i=1}^{s}\sum_{k=0}^{n}\frac{r_{i,k}}{\left(t+x+k\right)^i}  \mathrm{d}t\\
		&=\sum_{i=1}^{s}\sum_{k=0}^{n}r_{i,k}\int_{\Zp}\left(t+x+k\right)^{-i}  \mathrm{d}t\\
		&\stackrel{(\text{Prop.} \ref{prop_volkenborn_transl})}{=}\sum_{i=1}^{s}\sum_{k=0}^{n}r_{i,k}\left[\int_{\Zp}\left(t+x\right)^{-i}  \mathrm{d}t -i\sum_{\nu=0}^{k-1} \left(\nu+x\right)^{-i-1}  \right]\\
		&\stackrel{(\text{Lem.} \ref{lem_zeta_Volkenborn})}{=}-\sum_{i=1}^{s}\sum_{k=0}^{n}\sum_{\nu=0}^{k-1} i\cdot r_{i,k}\left(\nu+x\right)^{-i-1} + \sum_{i=1}^{s}\sum_{k=0}^{n}i\cdot r_{i,k}\omega(x)^{-i}\zeta_p\left(i+1,x  \right)\\
		&= \rho_{0,x}^{(n)}+\sum_{i=1}^s \rho^{(n)}_i\omega(x)^{-i}\zeta_p\left(i+1,x\right).
	\end{align*}
\end{proof}

The $p$-adic $L$-values of $\chi$ can be expressed using the $p$-adic Hurwitz zeta values. For $i\in\ZZ\setminus\{1\}$, we have the formula, cf. \cite[Def. 11.3.4]{cohen2}:
\begin{equation}\label{eq_Hurwitz_L}
	L_p(i,\chi)=\frac{\langle D \rangle^{1-i}}{D} \sum_{\substack{1\leq j\leq D \\ \gcd(j,p)=1 }} \chi(j)\zeta_p\left(i,\frac{j}{D}\right).
\end{equation}
For a positive integer $n$, let $\dn{n}$ be the least common multiple of $1,\dots,n$. Let $K\subseteq\bar{\QQ}$ be a number field containing the field of definition $\QQ(\chi)$ of the Dirichlet character $\chi$. We define the linear form
\begin{equation}\label{def_Lambda}
	\Lambda_n(X_0,\dots,X_{s}):=\lambda_{0,n}X_0+\dots+\lambda_{s,n}X_s,\quad \lambda_{i,n}\in K,
\end{equation}
with
\begin{align*}
	\lambda_{0,n}:&=(s-1)!\dn{n}^{s-1} \sum_{\substack{j=1\\ \gcd(j,p)=1}}^{D}\chi(j) \rho_{0,\frac{j}{D}}^{(n)},\\
	\lambda_{i,n}:&=(s-1)!\dn{n}^{s-1} D^{i+1} \rho_i^{(n)}, \quad \text{ for }1\leq i \leq s.
\end{align*}
As an immediate consequence of \Cref{prop_linearforms}, we get:
\begin{equation}\label{cor_L_linforms}
	(s-1)!\dn{n}^{s-1}\sum_{\substack{1\leq j\leq D \\ \gcd(j,p)=1}} \chi(j)\int_{\Zp} R_n(t+\frac{j}{D}) \mathrm{d}t=\Lambda_n\left(1,L_p(2,\chi\omega^{-1}),\dots,L_p(s+1,\chi\omega^{s})\right)
\end{equation}
This gives the desired linear forms in $p$-adic $L$-values, which will allow us later to deduce the linear independence result. The factor $(s-1)!\dn{n}^{s-1}$ is introduced in order to make the coefficients integral, see \Cref{cor_integrality}.

\section{Arithmetic properties of the linear forms}\label{sec_1_1}
Let us keep the setup introduced in \Cref{main_notation}. In the following, we study the arithmetic properties of the coefficients $\rho_{0,x}^{(n)}$ and $\rho_{i}^{(n)}$, defined in \eqref{eq_def_rho1} and \eqref{eq_def_rho2}. It is well-known that the binomial polynomials form a basis for all integer-valued polynomials. The following auxiliary lemma will be helpful for proving the integrality of the linear forms $\Lambda_n$:
\begin{lem}\label{lem_integral_binom}
	Let $\lambda,k,n$ and $m$ be non-negative integers satisfying $1\leq k\leq m$.
	\begin{enumerate}
	\item $d_n^\lambda \frac{\partial^\lambda}{\partial t^\lambda}\binom{m}{\underline{n}}\binom{t}{m}\in\QQ[t]$ is an integer-valued polynomial.
	\item $d_n^\lambda \frac{\partial^\lambda}{\partial t^\lambda} \frac{1}{t-k+1} \binom{m}{\underline{n}}\binom{t}{m}\in\QQ[t]$ is an integer-valued polynomial.
	\end{enumerate}
\end{lem}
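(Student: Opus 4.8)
The plan is to reduce both statements to a single fact about how differential operators interact with the binomial basis of integer-valued polynomials, namely: if $P(t) \in \QQ[t]$ is integer-valued, then $d_n^\lambda \frac{\partial^\lambda}{\partial t^\lambda} P(t)$ is integer-valued whenever $\deg P \leq n$ (or more precisely, whenever $P$ is a $\ZZ$-linear combination of the $\binom{t}{j}$ with $j \leq n$). Concretely, for the basis element $\binom{t}{j}$ one has the classical identity $\frac{\partial}{\partial t}\binom{t}{j} = \binom{t}{j}\sum_{i=0}^{j-1}\frac{1}{t-i}$, and iterating, $\frac{\partial^\lambda}{\partial t^\lambda}\binom{t}{j}$ is a $\QQ$-combination of terms of the shape $\binom{t}{j}\cdot\frac{1}{(t-i_1)\cdots(t-i_\lambda)}$ with $0 \le i_1,\dots,i_\lambda \le j-1$; partial-fraction-expanding $\binom{t}{j}/\prod(t-i_m)$ and clearing denominators shows that $d_j^\lambda \frac{\partial^\lambda}{\partial t^\lambda}\binom{t}{j}$ has coefficients in $\ZZ$ in the binomial basis, and since $j \le n$ we may replace $d_j$ by $d_n$. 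I would isolate this as the combinatorial heart of the argument.

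For part (a): the product $\binom{m}{\underline{n}}\binom{t}{m}$ is an integer multiple of $\binom{t}{m}$, hence integer-valued of degree $m$. If $m \le n$ we are immediately done by the fact above. If $m > n$, I would instead use that $\binom{m}{\underline{n}}\binom{t}{m}$ factors through a product of binomials of degree $\le n$: writing $\binom{t}{m} = \binom{t}{n}\binom{t-n}{n}\binom{t-2n}{n}\cdots$ up to a multinomial coefficient (this is exactly the telescoping identity behind the definition of $\binom{m}{\underline{n}}$ in \Cref{main_notation}), one sees $\binom{m}{\underline{n}}\binom{t}{m}$ is an integer combination of products of $\lfloor m/n\rfloor{+}1$ binomials, each of ``degree'' $\le n$ in the relevant shifted variable. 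Then Leibniz expands $\frac{\partial^\lambda}{\partial t^\lambda}$ of this product into a sum over compositions $\lambda = \lambda_0 + \lambda_1 + \cdots$, and to each factor one applies the basic fact with the single denominator $d_n$ sufficing because each $\lambda_i \le \lambda$ and each factor has degree $\le n$. Collecting the $\binom{\lambda}{\lambda_0,\lambda_1,\dots}$ (which are integers) preserves integrality.

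For part (b): I would reduce to part (a) by absorbing the extra factor $\frac{1}{t-k+1}$ into the differentiation. Since $1 \le k \le m$, the point $t = k-1$ is one of the integers $0,1,\dots,m-1$ where $\binom{t}{m}$ vanishes, so $\frac{1}{t-k+1}\binom{t}{m}$ is itself (up to sign) a polynomial, in fact $\pm\binom{t}{m}/(t-(k-1)) = \pm\frac{1}{m}\binom{m-1}{k-1}^{-1}\cdot(\text{integer combination of binomials of degree} \le m-1)$ — more cleanly, $\frac{1}{t-k+1}\binom{t}{m}$ is an integer-valued polynomial of degree $m-1$ after multiplying by the integer $\binom{m}{\underline{n}}$, by the same partial-fraction bookkeeping. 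Hence $\binom{m}{\underline{n}}\frac{1}{t-k+1}\binom{t}{m}$ is integer-valued of degree $< m$, and applying the fact (or part (a)'s argument) gives that $d_n^\lambda$ times its $\lambda$-th derivative is integer-valued.

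The main obstacle is the bookkeeping when $m > n$: one must be careful that a single factor $d_n^\lambda$, rather than $d_m^\lambda$ or $d_n^{\lambda(\lfloor m/n\rfloor+1)}$, suffices after the Leibniz expansion. This works because in each Leibniz term the total derivative order $\lambda$ is distributed among the factors, so no individual factor is differentiated more than $\lambda$ times, and for a binomial of degree $\le n$ the order-$\mu$ derivative needs only $d_n^\mu \mid d_n^\lambda$. I would make this precise by first proving the clean lemma ``$P$ integer-valued of degree $\le n$ $\Rightarrow$ $d_n^\lambda P^{(\lambda)}$ integer-valued'' and then quoting it inside the Leibniz sum; the multinomial-coefficient normalization $\binom{m}{\underline{n}}$ is precisely what makes the telescoped product have integer coefficients, so no extra denominators are introduced there.
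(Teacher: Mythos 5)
Your part (a) is correct and is essentially the paper's argument: the telescoping factorization $\binom{m}{\underline{n}}\binom{t}{m}=\binom{t}{n}\binom{t-n}{n}\cdots$ into integer-valued blocks of degree $\le n$, followed by the Leibniz rule and the classical fact that $d_n$ times the derivative of an integer-valued polynomial of degree $\le n$ is again integer-valued, iterated $\lambda$ times with the derivative orders distributed over the blocks so that a single factor $d_n^{\lambda}$ suffices.

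Part (b), however, has a genuine gap. You reduce to the claim that $\binom{m}{\underline{n}}\frac{1}{t-k+1}\binom{t}{m}$ is integer-valued of degree $m-1$ and then invoke ``the fact (or part (a)'s argument).'' But the fact only applies to polynomials of degree $\le n$, and when $m-1>n$ an integer-valued polynomial of that degree need not satisfy the conclusion: already $d_n\frac{\partial}{\partial t}\binom{t}{p}$ fails to be integer-valued for a prime $p>n$, since the derivative of $\binom{t}{p}$ at $t=0$ equals $\frac{(-1)^{p-1}}{p}$ and $p\nmid d_n$. Likewise ``part (a)'s argument'' needs a factorization into integer-valued blocks of degree $\le n$, which you do not exhibit for the function carrying the extra factor $\frac{1}{t-k+1}$; absorbing $t-k+1$ into the single telescoping block that it divides does not work either, since for instance $\frac{1}{t}\binom{t}{2}=\frac{t-1}{2}$ is not integer-valued. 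The missing ingredient is the identity $\frac{1}{t-k+1}\binom{t}{m}=\frac{(k-1)!\,(m-k)!}{m!}\binom{t}{k-1}\binom{t-k}{m-k}$, which splits the product at $k$ rather than at multiples of $n$, paired with the compatible factorization $\binom{m}{\underline{n}}=\frac{m!}{(m-k)!\,(k-1)!}\binom{m-k}{\underline{n}}\binom{k-1}{\underline{n}}\binom{m-n\left(\lfloor\frac{m-k}{n}\rfloor+\lfloor\frac{k-1}{n}\rfloor\right)}{\underline{n}}$: the factorials cancel, each of $\binom{t}{k-1}$ and $\binom{t-k}{m-k}$ acquires its own multinomial normalizer, the leftover factor is an integer, and then part (a) applies blockwise and Leibniz finishes. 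This identity also supplies the proof of your unproved assertion that $\binom{m}{\underline{n}}\frac{1}{t-k+1}\binom{t}{m}$ is integer-valued, which as stated you only justify by ``the same partial-fraction bookkeeping.''
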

\begin{proof}
It is well-known that (cf. \cite[Ch. IX]{cahen_chabert})
\begin{equation}\label{eq_integral_binom_1}
	d_n \cdot \frac{\partial}{\partial t} \binom{t}{k} \text{ is integer-valued for all }k\leq n.
\end{equation}
$(a)$ If $m<n$ the claim follows immediately from \eqref{eq_integral_binom_1}, thus we may assume $m\geq n$. The formula
\begin{equation}\label{eq_integral_binom_1a}
	\binom{m}{\underline{n}}\binom{t}{m}=\binom{t}{n}\binom{t-n}{n}\cdot \dots \cdot \binom{t-n\left\lfloor \frac{m}{n}\right\rfloor}{m-n\left\lfloor \frac{m}{n}\right\rfloor}
\end{equation}
shows that $\binom{m}{\underline{n}}\binom{t}{m}$ decomposes as a product of integer-valued polynomials of degree $\leq n$. Now the claim follows from \eqref{eq_integral_binom_1} by applying the Leibniz rule to \eqref{eq_integral_binom_1a} and observing that every integer-valued polynomial of degree $N$ can be written as an integral linear combination of binomial polynomials of degree $\leq N$.\par 
$(b)$ By the definition of the multinomial coefficients, we have
\begin{align}
	\label{eq_integral_binom_2} \binom{m}{\underline{n}}&=\frac{m!}{(m-k)!(k-1)!}\binom{m-k}{\underline{n}}\binom{k-1}{\underline{n}}\binom{m-n\left( \left\lfloor\frac{m-k}{n}\right\rfloor+\left\lfloor\frac{k-1}{n}\right\rfloor \right)}{\underline{n}},
\end{align}
and
\begin{equation}\label{eq_integral_binom_3}
	\frac{1}{t-k+1}\binom{t}{m}=\frac{(k-1)!(m-k)!}{m!}\binom{t}{k-1}\binom{t-k}{m-k}.
\end{equation}
Combining \eqref{eq_integral_binom_2} with \eqref{eq_integral_binom_3} gives:
\[
	\frac{1}{t-k+1}\binom{m}{\underline{n}}\binom{t}{m}=\binom{t}{k-1}\binom{k-1}{\underline{n}}\binom{t-k}{m-k}\binom{m-k}{\underline{n}}\binom{m-n\left( \left\lfloor\frac{m-k}{n}\right\rfloor+\left\lfloor\frac{k-1}{n}\right\rfloor \right)}{\underline{n}}.
\]
Now the result follows from $(a)$ using the Leibniz rule.
\end{proof}

\begin{prop}\label{prop_arith} Let $x=\frac{j}{D}$ with $1\leq j\leq D$ and $\gcd(j,p)=1$.
\begin{enumerate}
\item For $1\leq i\leq s$, we have
\begin{equation}\label{arith_eq4}
	(s-i)!\dn{n}^{s-i} \rho_i^{(n)} \in \ZZ
\end{equation}
\item For $i=0$, we get the formula
\begin{equation}\label{arith_eq3}
	(s-1)!\dn{n}^{s-1} \rho_{0,x}^{(n)}\in \ZZ.
\end{equation}
\end{enumerate}
\end{prop}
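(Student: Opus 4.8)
The plan is to prove both integrality statements directly from the partial fraction coefficients $r_{i,k}$, using \Cref{lem_integral_binom} to control the derivatives. First I would fix $k\in\{0,\dots,n\}$ and rewrite the defining formula
\[
	r_{i,k}=\frac{1}{(s-i)!}\left.\left(\frac{\partial}{\partial t}\right)^{s-i}\left[R_n(t)(t+k)^s\right]\right|_{t=-k}
\]
by substituting $t\mapsto u-k$, so that the quantity to be differentiated becomes $R_n(u-k)u^s$ evaluated at $u=0$. Unravelling \eqref{eq_def_Rn}, the function $R_n(u-k)u^s$ is, up to the explicit integer prefactor $n!^s\binom{N(n)}{\underline n}^Q$, equal to
\[
	\binom{D(u-k)+N(n)}{N(n)}^Q\,(D(u-k))^{2+\delta}\,\frac{u^s}{((u-k)_{n+1})^s},
\]
and $u^s/((u-k)_{n+1})^s=\prod_{j\ne k}(u-k+j)^{-s}\cdot(\text{unit at }u=0)$, which I would expand as a product of factors of the shape $\frac{1}{u+a}$ with $a$ a nonzero integer (hence a rational function regular and $a^{-1}\mathbb Z$-valued near $u=0$) together with a polynomial part. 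Grouping the $Q$ copies of the binomial and the $(2+\delta)$ linear factors, I would apply the Leibniz rule to distribute the $s-i$ derivatives among these factors. The point is that each derivative hitting a $\binom{D\cdot+N}{N}$-factor or a $\frac{1}{u+a}$-factor contributes (after clearing denominators) a factor $d_n$ by \Cref{lem_integral_binom}, since the relevant degrees are bounded by $N(n)$ and the poles sit at integers $\le n$ in absolute value; here one uses that $D\mid$ everything in sight is harmless because the substitution rescales $t$ by $D$ and the extra powers of $D$ are exactly absorbed in the definition of $\lambda_{i,n}$ (for part (a)) — though for \eqref{arith_eq4} itself, which only claims integrality after multiplying by $(s-i)!d_n^{s-i}$, the $D$-powers play no role. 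Since there are $s-i$ derivatives and the prefactor $\frac{1}{(s-i)!}$ is exactly cancelled by multiplying through by $(s-i)!$, we conclude $(s-i)!\,d_n^{s-i}r_{i,k}\in\mathbb Z$ for every $k$, and summing over $k$ with the extra integer factor $i$ gives \eqref{arith_eq4}.

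For part (b), I would recall that $\rho_{0,x}^{(n)}=-\sum_{i,k}\sum_{\nu=0}^{k-1}i\cdot r_{i,k}(\nu+x)^{-i-1}$ with $x=j/D$, so $(\nu+x)^{-i-1}=D^{i+1}(D\nu+j)^{-i-1}$, and $D\nu+j$ is a nonzero integer (it is coprime to $p$ and lies between $1$ and $Dn$). Thus $\rho_{0,x}^{(n)}$ is, up to sign, a $\mathbb Z$-linear combination of the $r_{i,k}$ with coefficients that are integers times $D^{i+1}$; a naive bound would only give denominators $d_n^{s-i}$ and an extra $D^{i+1}$. The subtlety is that \eqref{arith_eq3} asks for integrality after multiplying merely by $(s-1)!\,d_n^{s-1}$, i.e. the \emph{same} power $d_n^{s-1}$ for every term regardless of $i$, and with \emph{no} powers of $D$ — but here one should note that the statement of (b) is really about $\rho_{0,x}^{(n)}$ per se, whereas the $D^{i+1}$ factors are what appear in $\lambda_{0,n}$ via \eqref{def_Lambda}; so for \eqref{arith_eq3} as literally stated I would argue that $(s-1)!\,d_n^{s-1}\ge (s-i)!\,d_n^{s-i}\cdot\frac{(s-1)!}{(s-i)!}$ and that the missing factor $\frac{(s-1)!}{(s-i)!}=i\cdot(i+1)\cdots(s-1)$ is an integer absorbing the leading factor $i$ and providing the remaining denominator, while the $(D\nu+j)^{-i-1}$ contributes only a $p$-adic and $d'$-prime denominator that is killed once we remember $\rho_{0,x}^{(n)}\in\QQ$ and track only the relevant primes — more cleanly, I would simply multiply the linear form \eqref{cor_L_linforms} out and check integrality there, but restricted to (b) the cleanest route is: $(s-1)!\,d_n^{s-1}\rho_{0,x}^{(n)}=-\sum_{i,k}\big(\tfrac{(s-1)!}{(s-i)!}\big)\big((s-i)!\,d_n^{s-i}r_{i,k}\big)\,i\,d_n^{i-1}\sum_{\nu}(\nu+x)^{-i-1}$, where $(s-i)!\,d_n^{s-i}r_{i,k}\in\mathbb Z$ by (a), $\tfrac{(s-1)!}{(s-i)!}\in\mathbb Z$, and $d_n^{i-1}(\nu+x)^{-i-1}$ is handled by noting $d_n^{i-1}$ divides $d_n^{s-1}$-times nothing — so in fact I will need the full clearing-of-denominators bookkeeping and I should present (b) as a corollary of the combined estimate rather than in isolation.

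The main obstacle I anticipate is the second step of part (b): matching the uniform factor $(s-1)!\,d_n^{s-1}$ against the $i$-dependent denominators $(s-i)!\,d_n^{s-i}$ coming from the derivative count while simultaneously disposing of the $(\nu+x)^{-i-1}$ terms — in other words, showing that summing over $\nu$ does not cost an extra power of $d_n$ beyond what the binomial $\tfrac{(s-1)!}{(s-i)!}$ already supplies. The resolution is that $\sum_{\nu=0}^{k-1}(\nu+x)^{-i-1}$, after clearing the common denominator $\mathrm{lcm}_{0\le\nu<k}(\nu+x)^{i+1}$, and using $k\le n$, has denominator dividing $d_n^{i+1}$ only at primes dividing some $\nu+x$; but combined with the prefactor $r_{i,k}$ which already carries the compensating $d_n$-powers from its own derivatives, and the integer $\tfrac{(s-1)!}{(s-i)!}\ge i$, the total denominator collapses to $d_n^{s-1}$. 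I would verify this by a careful valuation estimate prime by prime, which is routine once the structure is set up but is the genuine computational heart of the proposition; everything else reduces to \Cref{lem_integral_binom} and the Leibniz rule.
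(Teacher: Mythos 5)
Your part (a) is essentially the paper's argument: expand $R_n(t)(t+k)^s$ as a product, apply the Leibniz rule to the $s-i$ derivatives, and let each derivative cost one factor of $\dn{n}$ via \Cref{lem_integral_binom}(a) and the partial fraction identity $\frac{n!}{(t)_{n+1}}=\sum_{m=0}^n\frac{(-1)^m\binom{n}{m}}{t+m}$. One caveat: you should not pull out $n!^s$ as a separate ``integer prefactor'' and then work with the individual factors $\frac{1}{u+a}$, since $\frac{1}{u+a}$ is not integral at $u=0$ even before differentiating; the integrality statement only holds for the combined function $\frac{n!(t+k)}{(t)_{n+1}}$, whose normalized Taylor coefficients at $t=-k$ lie in $\dn{n}^{-\lambda}\ZZ$. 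That is a repairable bookkeeping slip.

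Part (b) has a genuine gap, and you have correctly located where it is but not how to close it. Your plan keeps the decomposition $\rho_{0,x}^{(n)}=-\sum_{i,k,\nu}i\,r_{i,k}(\nu+x)^{-i-1}$ and tries to clear denominators term by term. This cannot work: $(\nu+x)^{-i-1}=D^{i+1}(D\nu+j)^{-i-1}$, and $D\nu+j$ is an integer up to $Dn$ which may be divisible by a prime $q$ with $n<q\leq Dn$; such a $q$ does not divide $\dn{n}$, and part (a) gives no information about the numerator of $r_{i,k}$, so no amount of ``prime by prime'' estimation of the individual terms produces integrality. The paper's resolution has two ingredients you are missing. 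First, the sum over $i$ is recombined into a single residue: for fixed $k$,
\[
	\sum_{i=1}^{s}\sum_{\nu=0}^{k-1} i\, r_{i,k}\left(\nu+x\right)^{-i-1}=\frac{1}{(s-1)!}\left.\left(\frac{\partial}{\partial t}\right)^{s-1}\left[R_n(t)(t+k)^s\sum_{\nu=1}^{k} \frac{1}{(t+\nu-x)^{2}} \right]\right|_{t=-k},
\]
which eliminates the $i$-dependent denominators altogether and reduces (b) to the same shape as (a) with $s-1$ derivatives. Second, the new singular factor is rewritten as $\frac{1}{(t+\nu-x)^2}=\frac{D^2}{\left(Dt+N(n)-(N(n)-D\nu+j)\right)^2}$ with $1\leq N(n)-D\nu+j\leq N(n)$, and is absorbed into two of the $Q$ copies of $\binom{Dt+N(n)}{N(n)}$: this is exactly what \Cref{lem_integral_binom}(b) is for, since $\frac{1}{t-k'+1}\binom{t}{m}$ is still a polynomial (the binomial has a zero at every integer $k'-1$ with $1\leq k'\leq m$), and after the multinomial normalization it is integer-valued with derivatives controlled by $\dn{n}$. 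The troublesome primes dividing $D\nu+j$ are thus cancelled by zeros of $R_n$ itself, not by $\dn{n}$-powers or by the binomial $\frac{(s-1)!}{(s-i)!}$ --- a mechanism that is invisible in your term-by-term decomposition.
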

\begin{proof} 
Let us first note that the partial fraction decomposition
\[
	\frac{n!}{(t)_{n+1}}=\sum_{m=0}^n\frac{(-1)^m\binom{n}{m}}{t+m}
\]
implies, for $0\leq \lambda\leq s$ and $0\leq k\leq n$, the formula:
\begin{equation}\label{eq_partial_fraction1}
	d_n^\lambda\left.\frac{\partial^\lambda}{\partial t^\lambda}\frac{n!}{(t)_{n+1}}\right|_{t=-k}\in \ZZ.
\end{equation}
$(a)$ In formula \eqref{eq_def_rho2}, we have defined
\begin{align*}
		\rho_i:=\sum_{k=0}^n i\cdot r_{i,k},\quad 1\leq i\leq s,
\end{align*}
where $r_{i,k}$ are the coefficients in the partial fraction decomposition of $R_n$:
\[
	r_{i,k}=\frac{1}{(s-i)!}\left . \left( \frac{\partial}{\partial t} \right)^{s-i} \left [  \binom{N(n)}{\underline{n}}^Q\binom{Dt+N(n)}{N(n)}^Q\left(\frac{n!}{(t)_{n+1}}\right)^s(Dt)^{2+\delta}(t+k)^s\right ]\right |_{t=-k}.
\]
Applying the Leibniz rule and observing \Cref{lem_integral_binom}, as well as \eqref{eq_partial_fraction1}, proves $(a)$.
\par 
$(b)$ We proceed similarly as in $(a)$. We have, according to \eqref{eq_def_rho1}, the formula
\[
	\rho_{0,x}^{(n)}:=-\sum_{i=1}^{s}\sum_{k=0}^{n}\sum_{\nu=0}^{k-1} i\cdot r_{i,k}\left(\nu+x\right)^{-i-1}
\]
with
	\[
	r_{i,k}=\frac{1}{(s-i)!}\left . \left( \frac{\partial}{\partial t} \right)^{s-i} \left [ R_n(t)(t+k)^s\right ]\right |_{t=-k}.
	\]
We compute for $k\in\{0,\dots,n\}$:
\begin{align*}
	\sum_{i=1}^{s}r_{i,k}\sum_{\nu=0}^{k-1} i\cdot \left(\nu+x\right)^{-i-1}=\frac{1}{(s-1)!}\left.\left(\frac{\partial}{\partial t}\right)^{s-1}\left[R_n(t)(k+t)^s\sum_{\nu=1}^{k} \left(t+\nu-x\right)^{-2} \right]\right|_{t=-k}.
\end{align*}
This gives
\begin{equation}\label{eq_rhoxn_derivative}
	\rho_{0,x}^{(n)}=\sum_{k=0}^n\frac{1}{(s-1)!}\left(\frac{\partial}{\partial t}\right)^{s-1}\left.\left[ R_n(t)(k+t)^s\sum_{\nu=1}^k  \frac{1}{(t+\nu-x)^2}  \right]\right|_{t=-k}.
\end{equation}
For $1\leq\nu\leq k\leq n$, we have the formula
\begin{multline*}
	R_n(t)(k+t)^s\frac{1}{(t+\nu-x)^2}=\\
	=\frac{D^2}{(Dt+N(n)-(N(n)-D\nu+Dx))^2}\binom{N(n)}{\underline{n}}^Q\binom{Dt+N(n)}{N(n)}^Q\left(\frac{n!}{(t)_{n+1}}\right)^s(Dt)^{2+\delta}(t+k)^s
\end{multline*}
Applying \Cref{lem_integral_binom} yields, for $0\leq \lambda\leq s-1$:
\begin{equation}\label{eq_integrality1}
	d_n^{\lambda}\left(\frac{\partial}{\partial t}\right)^{\lambda}\left.\frac{D}{(Dt+N(n)-(N(n)-D\nu+Dx))}\binom{N(n)}{\underline{n}}\binom{Dt+N(n)}{N(n)}\right|_{s=-k}\in\ZZ,
\end{equation}
and we conclude again using the Leibniz rule.
%
%
\end{proof}
%
\Cref{prop_arith} allows us to deduce the desired integrality properties for the linear forms $\Lambda_n$, which have been defined in \eqref{def_Lambda}:
\begin{cor}\label{cor_integrality}
Let $K$ be a finite extension of $\QQ(\chi)$. The linear forms $(\Lambda_n)_{n}$ have integral coefficients, i.e.
	\[
		\Lambda_n(X_0,\dots,X_s)\in \Ocal_{K} X_0+\dots+ \Ocal_{K} X_s.
	\]
\end{cor}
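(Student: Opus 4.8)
The plan is to derive the corollary directly from Proposition~\ref{prop_arith}, supplemented only by elementary divisibility facts and the observation that character values are algebraic integers. Recall that by definition
\[
	\lambda_{i,n} = (s-1)!\,\dn{n}^{s-1} D^{i+1}\rho_i^{(n)} \quad (1\le i\le s), \qquad \lambda_{0,n} = (s-1)!\,\dn{n}^{s-1}\!\!\sum_{\substack{1\le j\le D\\ \gcd(j,p)=1}}\!\!\chi(j)\,\rho_{0,j/D}^{(n)},
\]
so it suffices to treat the two types of coefficient separately.

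First I would handle $\lambda_{i,n}$ for $1\le i\le s$. Proposition~\ref{prop_arith}(a) gives $(s-i)!\,\dn{n}^{s-i}\rho_i^{(n)}\in\ZZ$. Since $1\le i\le s$, both $(s-1)!/(s-i)!$ and $\dn{n}^{s-1}/\dn{n}^{s-i}=\dn{n}^{i-1}$ are non-negative integers, and $D^{i+1}\in\ZZ$; multiplying these together with the integer $(s-i)!\,\dn{n}^{s-i}\rho_i^{(n)}$ shows $\lambda_{i,n}\in\ZZ\subseteq\Ocal_K$. The point is that the single normalizing factor $(s-1)!\,\dn{n}^{s-1}$ dominates the factor $(s-i)!\,\dn{n}^{s-i}$ actually needed for each $i$, so it works uniformly.

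For $\lambda_{0,n}$ I would argue as follows. Proposition~\ref{prop_arith}(b), applied with $x=j/D$ for each $j$ occurring in the sum, gives $(s-1)!\,\dn{n}^{s-1}\rho_{0,j/D}^{(n)}\in\ZZ$. On the other hand $\chi(j)$ is either $0$ or a root of unity, hence an algebraic integer lying in $\QQ(\chi)\subseteq K$, so $\chi(j)\in\Ocal_K$. Thus $\lambda_{0,n}=\sum_j \chi(j)\cdot\big((s-1)!\,\dn{n}^{s-1}\rho_{0,j/D}^{(n)}\big)$ is a $\ZZ$-linear combination of elements of $\Ocal_K$ and therefore lies in $\Ocal_K$, which completes the proof.

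There is no serious obstacle at this stage: the substantive work — the integrality of the derivatives of the binomial and multinomial building blocks after clearing denominators by powers of $\dn{n}$ — has already been done in Lemma~\ref{lem_integral_binom} and Proposition~\ref{prop_arith}. The only things to be careful about are the bookkeeping of which power of $\dn{n}$ and which factorial is actually required (so that one sees $(s-1)!\,\dn{n}^{s-1}$ is enough in every case), and the remark that the character values are algebraic integers, so that the summation over $j$ introduces no new denominators.
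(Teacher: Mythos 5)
Your proposal is correct and is exactly the deduction the paper intends: the corollary is stated as an immediate consequence of Proposition~\ref{prop_arith}, and your bookkeeping (that $(s-1)!/(s-i)!$ and $\dn{n}^{i-1}$ are integers so the uniform normalization $(s-1)!\dn{n}^{s-1}$ suffices for every $i$, together with $\chi(j)\in\Ocal_K$ for the $i=0$ coefficient) is precisely the omitted verification. No gaps.
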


\section{Asymptotic growth of the coefficients}\label{sec_1_2}
The aim of this section is to bound the asymptotic growth of the coefficients of the linear forms $\Lambda_n$ in the Archimedean norm. Let us keep the setup, introduced in \Cref{main_notation}. 

\begin{prop}\label{prop_growth}For $0\leq j\leq D$, we have the following estimates
 \[
 	\limsup_{n\rightarrow\infty} |\rho^{(n)}_{0,j/D}|^{1/n}\leq (pD)^{pQD}2^{s}\quad \text{and } \limsup_{n\rightarrow\infty} |\rho_i^{(n)}|^{1/n}\leq (pD)^{pQD}2^{s},
 \]
 for the rational numbers $\rho_i^{(n)}$ and $\rho^{(n)}_{0,j/D}$, which have been introduced in \eqref{eq_def_rho1} and \eqref{eq_def_rho2}.
\end{prop}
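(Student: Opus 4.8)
The plan is to reduce the statement to a bound on the partial-fraction coefficients $r_{i,k}$ and then estimate those by Cauchy's integral formula on a small circle. First I would note that $\rho_i^{(n)}=\sum_{k=0}^n i\,r_{i,k}$ is a sum of at most $s(n+1)$ terms, so $|\rho_i^{(n)}|\le s(n+1)\max_k|r_{i,k}|$; and that for $x=j/D$ with $1\le j\le D$ one has $\nu+j/D\ge 1/D$ for every $\nu\ge 0$, so by \eqref{eq_def_rho1} $|\rho_{0,j/D}^{(n)}|\le s^2(n+1)n\,D^{s+1}\max_{i,k}|r_{i,k}|$ (when $j=0$ one argues in the same way starting from the alternative expression \eqref{eq_rhoxn_derivative}, in which the problematic term is killed by the high-order vanishing of $R_n(t)(t+k)^s$ at $t=-k$). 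Since $[\mathrm{poly}(n)]^{1/n}\to 1$, it then suffices to prove $\limsup_{n\to\infty}\max_{1\le i\le s,\,0\le k\le n}|r_{i,k}|^{1/n}\le(pD)^{pQD}2^s$.

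Next I would estimate $r_{i,k}$ by a contour integral. The function $R_n(t)(t+k)^s$ is holomorphic on the disc $\{|t+k|<1\}$ — the pole of $R_n$ at $-k$ is cancelled by $(t+k)^s$, and the other poles $0,-1,\dots,-n$ lie at distance $\ge 1$ from $-k$ — so Cauchy's formula gives $r_{i,k}=\tfrac{1}{2\pi i}\oint_{|t+k|=1/2}R_n(t)(t+k)^{i-1}\,dt$, whence $|r_{i,k}|\le 2^{s}\max_{|t+k|=1/2}|R_n(t)(t+k)^s|$. I would then bound the four factors of
\[
R_n(t)(t+k)^s=\binom{N(n)}{\underline{n}}^{Q}\binom{Dt+N(n)}{N(n)}^{Q}(Dt)^{2+\delta}\Bigl(\tfrac{n!\,(t+k)}{(t)_{n+1}}\Bigr)^{s}
\]
on the circle $|t+k|=\tfrac12$, for $0\le k\le n$. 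For the first, the multinomial theorem gives $\binom{N(n)}{\underline{n}}\le(pD)^{N(n)}$ (it has at most $pD$ parts), and $N(n)<pDn$ by \eqref{def_N}, so $\binom{N(n)}{\underline{n}}^{Q}\le(pD)^{pQDn}$. For the last, $|t+j|\ge|j-k|-\tfrac12$ for $j\ne k$, so $|(t)_{n+1}|\ge\tfrac12\prod_{j\ne k}(|j-k|-\tfrac12)\ge c\,k!(n-k)!/n$ for an absolute constant $c$, hence $\bigl|\tfrac{n!\,(t+k)}{(t)_{n+1}}\bigr|^{s}\le\mathrm{poly}(n)\binom{n}{k}^{s}\le\mathrm{poly}(n)\,2^{sn}$. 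The factor $(Dt)^{2+\delta}$ is bounded by $(D(n+1))^{2+\delta}$, polynomial in $n$. For the remaining binomial I would write $Dt+N(n)=M+w$ with $M:=N(n)-Dk$ and $|w|=D/2$; the crucial point is that $M$ is a \emph{nonnegative} integer because $N(n)\ge Dn\ge Dk$, so estimating $|M+w-j|\le|M-j|+D$ factor by factor yields $\bigl|\binom{Dt+N(n)}{N(n)}\bigr|\le\frac{(N(n)+D)!}{(D-1)!\,N(n)!}\le(N(n)+D)^{D}$, again polynomial in $n$, so $\bigl|\binom{Dt+N(n)}{N(n)}\bigr|^{Q}\le\mathrm{poly}(n)$.

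Combining these four bounds gives $\max_{|t+k|=1/2}|R_n(t)(t+k)^s|\le\mathrm{poly}(n)\,(pD)^{pQDn}\,2^{sn}$, hence the same bound for $\max_{i,k}|r_{i,k}|$, and taking $n$-th roots and $\limsup$ finishes the argument. I expect the main obstacle to be the estimate of $\binom{Dt+N(n)}{N(n)}$ on the contour: it crucially exploits that $N(n)$ was chosen $\ge Dn$ in \eqref{def_N}, which is exactly what prevents sign cancellations among the factors and keeps this binomial only polynomially large; without this, raising it to the power $Q$ would contribute a spurious exponential growth rate and destroy the stated bound. The companion normalization $N(n)<pDn$, also built into \eqref{def_N}, is what makes the multinomial factor contribute precisely $(pD)^{pQD}$, while the only remaining exponential contribution $2^s$ comes from $\binom{n}{k}^s$.
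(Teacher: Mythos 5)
Your proposal is correct and follows essentially the same route as the paper: reduce to the partial-fraction coefficients $r_{i,k}$, estimate them via Cauchy's formula on a small circle about $-k$, and bound the four factors so that only the multinomial coefficient (giving $(pD)^{pQD}$) and $n!/(t)_{n+1}$ (giving $2^{s}$ via $\binom{n}{k}\le 2^{n}$) contribute exponentially, the rest being polynomial in $n$. The only differences are cosmetic (radius $1/2$ versus $1/D$, and your more explicit treatment of the $j=0$ edge case and of why $N(n)\ge Dn$ keeps $\binom{Dt+N(n)}{N(n)}$ polynomially bounded, a point the paper uses implicitly).
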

\begin{proof} Let us write $x:=j/D$. By the definition of $\rho_i^{(n)}$ and $\rho_{0,x}^{(n)}$, we deduce
\[
	|\rho_i^{(n)}|\leq \sum_{k=0}^n  i |r_{i,k}|,\quad |\rho_{0,x}^{(n)}|\leq \sum_{k=0}^{n}\sum_{i=1}^{s}\sum_{\nu=0}^{k-1} i\cdot \left|\nu+x\right|^{-i-1}|r_{i,k}|
\]
and it suffices to prove
\[
	\limsup_{n\rightarrow\infty} |r_{i,k}|^{1/n}\leq (pD)^{pQD}2^{s}.
\]
By Cauchy's integral formula, we have
\begin{align*}
	r_{i,k}= \frac{1}{2\pi i}\int_{|t+k|=\frac{1}{D}} \binom{N(n)}{\underline{n}}^Q\binom{Dt+N(n)}{N(n)}^Q \frac{ n!^{s}}{(t)_{n+1}^s} (Dt)^{2+\delta}(t+k)^{i-1}\mathrm{d}t,
\end{align*}
and hence 
\begin{equation}\label{eq_growth_0}
	|r_{i,k}|\leq \sup_{|t+k|=\frac{1}{D}}\left| \binom{N(n)}{\underline{n}}^Q\binom{Dt+N(n)}{N(n)}^Q \frac{ n!^{s}}{(t)_{n+1}^s} (Dt)^{2+\delta} \right|.
\end{equation}
In the following, we estimate each of the terms in \eqref{eq_growth_0}. The multinomial formula
shows the estimate
\begin{equation}\label{eq_growth_1}
	\binom{N(n)}{\underline{n}}\leq \left\lceil \frac{N(n)}{n}\right\rceil^{N(n)}\leq (pD)^{pDn}.
\end{equation}
For $k\geq 0$ and $|t+k|\leq \frac{1}{D}$, we have 
\begin{equation}\label{eq_growth_3}
	\left| \binom{Dt+N(n)}{N(n)} \right|\leq \binom{1+N(n)}{N(n)}=1+N(n)\leq 1+ pDn.
\end{equation}
Next, let us consider $\frac{n!}{|(t)_{n+1}|}$. For $t\in\CC$ with $|t+k|=\frac{1}{D}$, we get
\begin{align*}
	|(t)_{n+1}|\geq \prod_{\iota=0}^{n}\left| -\frac{1}{D}+|\iota-k| \right|
\end{align*}
If $|\iota-k|>1$, we have $\left| -\frac{1}{D}+|\iota-k| \right|\geq |\iota-k|-1$, otherwise we get $\left| -\frac{1}{D}+|\iota-k| \right|\geq \frac{1}{D}$. We obtain the estimate
\begin{equation}\label{eq_growth_4}
	\frac{n!}{|(t)_{n+1}|} \leq D^3 \binom{n}{k}\leq D^3 \cdot 2^n.
\end{equation}
The term $(Dt)^{2+\delta}$ can be estimated by
\begin{equation}\label{eq_growth_5}
	|(Dt)^{2+\delta}|\leq (Dn)^3.
\end{equation}
Combining \eqref{eq_growth_1}, \eqref{eq_growth_3}, \eqref{eq_growth_4} and \eqref{eq_growth_5} shows
\[
	|r_{i,k}|\leq (pD)^{pDQn}2^{sn}(1+pDn)^Q D^{3s+3}n^3,
\]
and we deduce
\[
	\limsup_n |r_{i,k}|^{\frac{1}{n}}\leq (pD)^{pQD}2^s
\]
as desired.
\end{proof}
\Cref{prop_growth} allows us to bound asymptotically the Archimedean heights of the linear forms $\Lambda_n$, introduced in \eqref{def_Lambda}:
\begin{cor}\label{cor_growth} Let $K$ be a finite extension of $\QQ(\chi)$. The height $H_K(\Lambda_n)$ of the family $(\Lambda_n)_n$ of linear forms can be estimated by the formula
\[
		H_K(\Lambda_n)\leq \exp\left( n\left[\tau_\infty(l,s)+o(1)\right] \right) \text{ as } n\rightarrow \infty,
\]
with
	\[
		\tau_{\infty}(l,s)=[K:\QQ]\left( s\log 2+d'p^{r+2+2l}\log(d'p^{1+l})+s\right).
	\]
	Here, $o(1)$ stands for any (non-negative) sequence tending to $0$ as $n\rightarrow\infty$.
\end{cor}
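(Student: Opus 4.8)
The plan is to bound $|N_K(\lambda_{i,n})|$ for each $i$ by combining the asymptotic estimates of Proposition \ref{prop_growth} with the standard prime-counting bound $d_n = e^{n(1+o(1))}$, and then to track the explicit extra factors $(s-1)!\, d_n^{s-1} D^{i+1}$ (and the sum over $j$ in the case $i=0$) that enter the definition of $\lambda_{i,n}$. First I would recall that for $\alpha \in K$ with archimedean absolute values $|\alpha|_v$ one has $|N_K(\alpha)| = \prod_v |\alpha|_v$ over the archimedean places, so $|N_K(\alpha)| \leq \left( \max_v |\alpha|_v \right)^{[K:\QQ]}$; in particular, if $\alpha$ lies in the $\QQ$-span generated by $\rho_{0,j/D}^{(n)}$ or $\rho_i^{(n)}$ times rational integers, the conjugates are controlled by the archimedean sizes of those same rational quantities, and the $[K:\QQ]$-th power is exactly the source of the factor $[K:\QQ]$ in $\tau_\infty$.

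Next I would treat the three ingredients separately. For the factor $d_n^{s-1}$, the prime number theorem gives $\limsup_n d_n^{1/n} = e$, contributing $s$ to the bracket (since $\log e = 1$, giving the summand $s$ inside $\tau_\infty$ after the missing $+1$; more precisely $d_n^{s-1}$ contributes $(s-1)$ but the $(s-1)!$ is subexponential and the bookkeeping absorbs into the $o(1)$, so effectively one writes the clean constant $s$). For the factor $D^{i+1}$ with $i \leq s$ and the combinatorial prefactor in $R_n$, note $D = d'p^l$ is a constant independent of $n$, hence $D^{i+1} = e^{o(n)}$ and contributes nothing to the exponential rate; the genuinely exponential contribution of the shape $(pD)^{pQD}$ comes from Proposition \ref{prop_growth}, and since $Q = p^{r+l+1}$, one has $pQD = pD \cdot p^{r+l+1} = d'p^{r+2+2l} \cdot p^{\,?}$ — here I would carefully expand $(pD)^{pQD}$ as $\exp\big( pQD \log(pD) \big) = \exp\big( d'p^{r+2+2l}\log(d'p^{1+l}) \big)$ up to the exponent bookkeeping, which is precisely the middle summand of $\tau_\infty$. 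The factor $2^s$ from Proposition \ref{prop_growth} contributes $s\log 2$, the first summand.

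Finally I would assemble: for $1 \leq i \leq s$,
\[
  |\lambda_{i,n}|^{1/n} = \big| (s-1)!\, d_n^{s-1} D^{i+1} \rho_i^{(n)} \big|^{1/n} \leq e^{1+o(1)} \cdot (pD)^{pQD} 2^s \cdot e^{o(1)},
\]
and similarly for $\lambda_{0,n}$ the sum over the $\phi(D')$-many terms $\chi(j)\rho_{0,j/D}^{(n)}$ only contributes a constant factor, hence the same bound, so that $\limsup_n |\lambda_{i,n}|^{1/n} \leq e \cdot (pD)^{pQD} 2^s$ for every $i$; taking logarithms, multiplying by $[K:\QQ]$ to pass to the norm, and recognizing $pQD\log(pD)$, $s\log 2$, and the $d_n$-contribution gives $H_K(\Lambda_n) \leq \exp\big( n[\tau_\infty(l,s) + o(1)]\big)$ with $\tau_\infty(l,s) = [K:\QQ]\big( s\log 2 + d'p^{r+2+2l}\log(d'p^{1+l}) + s \big)$.

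The main obstacle is purely organizational rather than conceptual: one must verify that $pQD\log(pD)$ really simplifies to $d'p^{r+2+2l}\log(d'p^{1+l})$ — i.e. that $pQD = d'p^{r+2+2l}$, which follows from $Q = p^{r+l+1}$ and $D = d'p^l$ so that $pQD = p \cdot p^{r+l+1} \cdot d'p^l = d'p^{r+2l+2}$ — and that the subexponential clutter ($(s-1)!$, the constant $D^{i+1}$, the $(1+pDn)^Q$ and $n^3$ tails in Proposition \ref{prop_growth}, the finite sum over $j$) is genuinely absorbed into $o(1)$; none of this is hard, but it is where an error would most plausibly creep in.
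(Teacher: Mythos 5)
Your proposal is correct and follows essentially the same route as the paper: bound each $|N_K(\lambda_{i,n})|$ by the $[K:\QQ]$-th power of its archimedean size, feed in Proposition~\ref{prop_growth} together with the prime number theorem for $d_n$, absorb the subexponential factors into $o(1)$, and simplify $pQD\log(pD)=d'p^{r+2+2l}\log(d'p^{1+l})$. The only blemish is the displayed inequality $|\lambda_{i,n}|^{1/n}\leq e^{1+o(1)}\cdot(pD)^{pQD}2^s\cdot e^{o(1)}$, where the factor $d_n^{s-1}$ should contribute $e^{(s-1)(1+o(1))}$ rather than $e^{1+o(1)}$ --- but your surrounding text correctly tracks the exponent $s-1\leq s$, so the conclusion is unaffected.
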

\begin{proof}
Let us first recall that the height is defined by the formula
\[
	H_K(\Lambda_n):=\max_{0\leq i\leq s} |N_K(\lambda_{i,n})|.
\]
The norms of the coefficients can be estimated by
\[
	|N_K((s-1)!\dn{n}^{s-1} \rho_i^{(n)})|=|(s-1)!\dn{n}^{s-1} \rho_i|^{[K:\QQ]}
\]
and
\begin{align*}
	\left|N_K\left((s-1)!\dn{n}^{s-1}\sum_{1\leq j\leq D} \chi(j)\rho_{0,\frac{j}{D}}^{(n)} \right)\right|\leq \left( \sum_{1\leq j\leq D} \left|(s-1)!\dn{n}^{s-1}\rho_{0,\frac{j}{D}}^{(n)}\right| \right)^{[K:\QQ]}.
\end{align*}
According to the prime number theorem, we have
\[
	\lim_{n\rightarrow \infty}\frac{1}{n}\log |\dn{n}|=1,
\]
and \Cref{prop_growth} gives the estimate
 \[
 	 \frac{1}{n}\log H_K(\Lambda_n)\leq [K:\QQ]\left[\log(2)s+pQD\log(pD) +s\right]+o(1) \text{ as }n\rightarrow\infty.
 \]
Substituting $Q:=p^{r+l+1}$ and $D:=d'p^l$ concludes the proof of the corollary.
\end{proof}

\section{\texorpdfstring{$p$-adic convergence of the linear forms}{p-adic convergence of the linear forms}}\label{sec_1_3}
In this section, we control the $p$-adic convergence of the linear forms
\[
\Lambda_n\left(1,L_p(2,\chi\omega^{-1}),\dots,L_p(s+1,\chi\omega^{s})\right).
\]
Let us first discuss a few auxiliary results about binomial coefficients and Volkenborn integration. We start with the following 'classical' results due to Kummer and Lucas:
\begin{lem}[{\cite[p. 116]{kummer}}]
	The $p$-adic valuation of the binomial coefficient $\binom{n}{k}$ is given by the number of 'carry overs' when adding $k$ and $n-k$ in the $p$-adic base.
\end{lem}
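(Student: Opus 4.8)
The plan is to derive Kummer's theorem from Legendre's formula for the $p$-adic valuation of a factorial, combined with a direct analysis of the base-$p$ addition algorithm.

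First I would recall Legendre's formula
\[
\nu_p(m!) = \sum_{i\ge 1}\left\lfloor \frac{m}{p^i}\right\rfloor = \frac{m - s_p(m)}{p-1},
\]
where $s_p(m)$ denotes the sum of the digits of $m$ written in base $p$; the first equality is the standard count of multiples of $p,p^2,\dots$ among $1,\dots,m$, and the second follows by inserting the base-$p$ expansion $m=\sum_j b_j p^j$ and summing. Applying this to $\binom{n}{k}=n!/\bigl(k!\,(n-k)!\bigr)$ and abbreviating $m:=n-k$ gives
\[
\nu_p\binom{n}{k} = \frac{s_p(k) + s_p(m) - s_p(k+m)}{p-1},
\]
since $k+m=n$. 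Thus the statement reduces to showing that $s_p(k)+s_p(m)-s_p(k+m)$ equals $(p-1)$ times the number of carries occurring in the base-$p$ addition of $k$ and $m$.

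For this last point I would write $k=\sum_{i\ge 0} a_i p^i$ and $m=\sum_{i\ge 0} b_i p^i$ with digits in $\{0,\dots,p-1\}$, and introduce the carry bits $\epsilon_i\in\{0,1\}$ defined by $\epsilon_0=0$ and $\epsilon_{i+1}=1$ exactly when $a_i+b_i+\epsilon_i\ge p$. An easy induction shows that the base-$p$ digits of $n=k+m$ are $d_i=a_i+b_i+\epsilon_i-p\,\epsilon_{i+1}$. Summing this identity over all $i$ (all sums being finite, since $k$ and $m$ have finitely many nonzero digits) and using $\epsilon_0=0$ to telescope $\sum_{i\ge 0}\epsilon_i=\sum_{i\ge 0}\epsilon_{i+1}$ yields
\[
s_p(n) = s_p(k) + s_p(m) - (p-1)\sum_{i\ge 1}\epsilon_i,
\]
and $\sum_{i\ge 1}\epsilon_i$ is precisely the number of positions at which a carry occurs. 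Substituting back into the formula for $\nu_p\binom{n}{k}$ completes the argument. The only mildly delicate step is the carry bookkeeping — verifying the digit formula for $d_i$ and justifying the telescoping of the $\epsilon_i$ — but this is entirely routine; everything else is a formal manipulation of Legendre's formula.
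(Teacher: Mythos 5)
Your proof is correct: the reduction of $\nu_p\binom{n}{k}$ to $\bigl(s_p(k)+s_p(n-k)-s_p(n)\bigr)/(p-1)$ via Legendre's formula, followed by the carry-telescoping identity $s_p(k+m)=s_p(k)+s_p(m)-(p-1)\sum_i\epsilon_i$, is the standard and complete derivation of Kummer's theorem. The paper itself gives no proof of this lemma --- it is quoted as a classical result with a citation to Kummer's original article --- so there is nothing to compare against; your argument is exactly the textbook route one would supply.
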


\begin{lem}[{\cite{lucas}}]\label{lem_lucas}
	For non-negative integers $m$ and $n$, we have
	\[
		\binom{m}{n}\equiv \prod_{i=0}^r \binom{m_i}{n_i}\mod p
	\]
	where $m=\sum_{i=0}^r p^i m_i$ and $n=\sum_{i=0}^r p^i n_i$ are the $p$-adic expansions of $n$ and $m$.
\end{lem}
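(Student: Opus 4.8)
The plan is to deduce Lucas' congruence from a generating-function identity in the polynomial ring $\mathbb{F}_p[X]$. The starting point is the elementary fact that $p\mid\binom{p}{k}$ for $0<k<p$, which gives the ``freshman's dream'' $(1+X)^p\equiv 1+X^p$ in $\mathbb{F}_p[X]$. Iterating this --- at each step raising to the $p$-th power and reapplying the freshman's dream with $X$ replaced by $X^{p^i}$ --- yields $(1+X)^{p^i}\equiv 1+X^{p^i}\bmod p$ for all $i\geq 0$.

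Next, after padding the shorter of the two base-$p$ expansions with leading zeros so that $m=\sum_{i=0}^r m_ip^i$ and $n=\sum_{i=0}^r n_ip^i$ range over the same index set, I would write
\[
(1+X)^m=\prod_{i=0}^r\bigl((1+X)^{p^i}\bigr)^{m_i}\equiv\prod_{i=0}^r\bigl(1+X^{p^i}\bigr)^{m_i}=\prod_{i=0}^r\sum_{j_i=0}^{m_i}\binom{m_i}{j_i}X^{j_ip^i}\bmod p.
\]
Expanding the product, the coefficient of $X^n$ on the right-hand side equals $\sum\prod_{i=0}^r\binom{m_i}{j_i}$, where the sum runs over all tuples $(j_0,\dots,j_r)$ with $0\leq j_i\leq m_i$ and $\sum_{i=0}^r j_ip^i=n$.

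The decisive step is then the comparison of coefficients. Since each digit satisfies $m_i\leq p-1$, the condition $0\leq j_i\leq m_i$ forces $(j_0,\dots,j_r)$ to be the unique vector of base-$p$ digits of $n$; hence the sum has at most one term, namely the one with $j_i=n_i$ for all $i$, and this term is present precisely when $n_i\leq m_i$ for all $i$ --- in agreement with the standard convention $\binom{m_i}{n_i}=0$ for $n_i>m_i$. As the coefficient of $X^n$ in $(1+X)^m$ is $\binom{m}{n}$, comparing the two expressions gives $\binom{m}{n}\equiv\prod_{i=0}^r\binom{m_i}{n_i}\bmod p$, which is the assertion. There is no serious obstacle in this argument; the only point that deserves care is the uniqueness of the base-$p$ representation invoked in the coefficient comparison, together with the bookkeeping around the convention $\binom{m_i}{n_i}=0$.
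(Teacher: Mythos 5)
Your argument is the standard generating-function proof of Lucas' theorem and it is correct and complete: the iterated freshman's dream gives $(1+X)^{p^i}\equiv 1+X^{p^i}\bmod p$, and the uniqueness of base-$p$ digits (each $j_i\leq m_i\leq p-1$) correctly reduces the coefficient of $X^n$ to the single term $\prod_i\binom{m_i}{n_i}$, with the convention $\binom{m_i}{n_i}=0$ for $n_i>m_i$ handling the degenerate cases. The paper itself offers no proof of this lemma --- it is quoted as a classical result with a citation to Lucas --- so there is nothing to compare your approach against; your write-up would serve as a self-contained justification if one were wanted.
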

Let us keep the previous notation, introduced in \Cref{main_notation}. For $j\in\ZZ_{\geq 0}$, define the characteristic function $\mathcal{X}_{j,n}\colon \Zp\rightarrow \{0,1\}$ by
\[
	\mathcal{X}_{j,n}(x):=\begin{cases} 1, & \text{if } x\equiv -(d')^{-1}\lfloor j/p^l\rfloor \mod p^{\lfloor \log_p(nd') \rfloor+1} \\ 0, &  \text{else.} \end{cases}
\]
The results of Kummer and Lucas imply the following corollary:
\begin{cor}\label{cor_chi}
	 Let $j\in\ZZ_{\geq 0}$, $n\in\ZZ_{>0}$ and $x\in\Zp$. Then
	\[
		\binom{N(n)+Dx+j}{N(n)}\equiv \mathcal{X}_{j,n}(x)  \mod p.
	\]
\end{cor}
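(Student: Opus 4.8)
The plan is to reduce the binomial coefficient $\binom{N(n)+Dx+j}{N(n)}$ modulo $p$ to a product of ``digit binomial coefficients'' via Lucas' theorem, and then to analyse which $p$-adic digits of $N(n)+Dx+j$ and $N(n)$ actually contribute. Recall that $N(n)=p^l(p^m-1)$ with $m=\lfloor\log_p(d'n)\rfloor+1$, so the $p$-adic expansion of $N(n)$ is extremely rigid: its digits are $0$ in positions $0,\dots,l-1$ and $p-1$ in positions $l,\dots,l+m-1$, and $0$ thereafter. Similarly $D=d'p^l$, so $Dx$ has its bottom $l$ digits equal to $0$; writing $x$ in $p$-adic digits, $Dx+j$ has a controlled low part coming from $j$ and a high part coming from $d'x$ shifted by $l$. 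The first step is therefore to write out these $p$-adic expansions carefully and isolate the digits of $N(n)+Dx+j$ in positions $l,\dots,l+m-1$, which are the only positions where $N(n)$ has a nonzero digit.

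Next I would apply Lucas (Lemma \ref{lem_lucas}): modulo $p$,
\[
	\binom{N(n)+Dx+j}{N(n)}\equiv \prod_{i=0}^{l+m-1}\binom{(N(n)+Dx+j)_i}{N(n)_i}\bmod p,
\]
and since $N(n)_i=0$ for $i<l$ and for $i\ge l+m$, all those factors are $1$ (or $0$ if there is a carry forcing $(N(n)+Dx+j)_i < 0$, which cannot happen). For $l\le i\le l+m-1$ we have $N(n)_i=p-1$, so $\binom{(N(n)+Dx+j)_i}{p-1}$ is $1$ if the corresponding digit of $N(n)+Dx+j$ equals $p-1$ and $0$ otherwise. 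Hence the whole product is $1$ precisely when $N(n)+Dx+j$ has digit $p-1$ in every position $l,\dots,l+m-1$, and $0$ otherwise. Equivalently — and this is the Kummer-flavoured reformulation — the product is $1$ iff adding $N(n)$ to $Dx+j$ produces a carry out of position $l+m-1$ that exactly fills each of the digits $l,\dots,l+m-1$; one checks this is equivalent to the low part $Dx+j \bmod p^{l+m}$ being at least $p^{l+m}-N(n)=p^{l+m}-p^l(p^m-1)=p^l$, together with the carries not disturbing higher digits. I would then unwind this congruence condition: $Dx+j\equiv 0\bmod p^l$ automatically (as $j$'s low digits are forced by those of $Dx$, but more simply one tracks $d'x + \lfloor j/p^l\rfloor$ in the block of digits $l,\dots,l+m-1$), and the surviving condition becomes $d'x + \lfloor j/p^l\rfloor \equiv 0 \bmod p^{m}$, i.e. $x\equiv -(d')^{-1}\lfloor j/p^l\rfloor \bmod p^{m}$ since $\gcd(d',p)=1$. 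As $m=\lfloor\log_p(nd')\rfloor+1$, this is exactly the defining condition of $\mathcal{X}_{j,n}(x)$, so $\binom{N(n)+Dx+j}{N(n)}\equiv \mathcal{X}_{j,n}(x)\bmod p$.

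The main obstacle I anticipate is bookkeeping the carries cleanly: one must be sure that when $x\equiv -(d')^{-1}\lfloor j/p^l\rfloor \bmod p^m$ the addition $N(n)+(Dx+j)$ fills digits $l,\dots,l+m-1$ with $p-1$ \emph{without} a further carry into position $l+m$ that would then make some digit there land outside the legal range (it does not, because $N(n)$ has digit $0$ there), and conversely that if the congruence fails then at least one digit in that block is $\ne p-1$. This is a finite, deterministic digit computation, but it is the part where it is easiest to slip; I would organise it by splitting $Dx+j = p^l\cdot(d'x+\lfloor j/p^l\rfloor) + (j\bmod p^l)$ — noting $j\bmod p^l$ and $Dx\equiv 0\bmod p^l$ mean the bottom $l$ digits never interact with $N(n)$ — and then treating the addition $p^l(p^m-1) + p^l(d'x+\lfloor j/p^l\rfloor)$ in the single shifted block, where it reduces to the elementary fact that $(p^m-1)+A$ has all of its bottom $m$ digits equal to $p-1$ iff $A\equiv 0\bmod p^m$.
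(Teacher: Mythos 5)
Your proposal is correct and follows essentially the same route as the paper: both arguments split $Dx+j$ into the low block $j\bmod p^l$ and the shifted block $d'x+\lfloor j/p^l\rfloor$, use that $N(n)=p^l(p^m-1)$ has digit $p-1$ exactly in positions $l,\dots,l+m-1$, and conclude via Lucas/Kummer that the binomial coefficient is $1$ or $0$ modulo $p$ according to whether $d'x+\lfloor j/p^l\rfloor\equiv 0\bmod p^m$. The only point you leave implicit is the initial reduction to $x\in\ZZ_{\geq 0}$ (by density of $\ZZ_{\geq 0}$ in $\Zp$ and $p$-adic continuity of both sides), which is needed before the finite digit expansions and \Cref{lem_lucas} can be invoked literally.
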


\begin{proof} Since $\ZZ_{\geq 0}$ is dense in $\Zp$ and both functions are $p$-adically continuous, it is enough to prove the claim for $x\in\ZZ_{\geq 0}$. By the definition of $N(n)$, we have $N(n)=p^l(p^m-1)$, for $m:=\lfloor \log_p(nd') \rfloor+1$. Let us write
\[
		Dx+j=p^{l}\left[ d'x+\left\lfloor\frac{j}{p^{l}}\right\rfloor \right] + \underbrace{j-p^{l}\left\lfloor \frac{j}{p^{l}}  \right\rfloor}_{<p^{l}}.
	\]
Since $p^l(p^m-1)=\sum_{i=l}^{m+l-1}(p-1)p^i$, there are no 'carry overs' in the $p$-adic addition of $p^l(p^m-1)$ and $Dx+j$ if and only if $d'x+\lfloor\frac{j}{p^{l}}\rfloor\equiv 0\mod p^m$. If $x\not\equiv -(d')^{-1}\lfloor j/p^l\rfloor \mod p^{m}$, this proves
\[
	\binom{N(n)+Dx+j}{N(n)}\equiv 0\mod p.
\]
Let us now assume $x\equiv -(d')^{-1}\lfloor j/p^l\rfloor \mod p^{m}$. In this case, the integer $Dx+p^l\lfloor\frac{j}{p^{l}}\rfloor$ is divisible by $p^{l+m}$, say $Dx+\lfloor\frac{j}{p^{l}}\rfloor=Ap^{l+m}$, for some positive integer $A$. This allows us to express $N(n)+Dx+j$ in the form
\[
	N(n)+Dx+j=p^l(p^m-1)+Dx+j=A\cdot p^{m+l}+p^l(p^m-1)+\underbrace{j-p^{l}\lfloor \frac{j}{p^{l}} \rfloor}_{<p^{l}}.
\]
Lucas' Theorem implies
\[
	\binom{N(n)+Dx+j}{N(n)} \equiv \binom{p^l(p^m-1)+Dx+j}{p^l(p^m-1)}\equiv 1\mod p.
\]
\end{proof}

Let us recall, from \Cref{main_notation}, that we have defined $Q:=p^{l+r+1}$ and $D:=d'p^l$ with $r:=\left\lfloor\nu_p\left(B_{2+\delta,\chi}\right)\right\rfloor+1$, and that $\delta\in\{0,1\}$ depends on the parity of $\chi$.

\begin{prop}\label{prop_intbinom}
	For a positive integer $j$, which is prime to $p$, define
	\[
		f_j(t):=\binom{N(n)+Dt+j}{N(n)}^{Q}\frac{(Dt+j)^{2+\delta}}{\prod_{i=0}^{n}(D(t+i)+j)^s}.
	\]
	If $(p-1)|s$ and  $p^{l+r}|(n+1)$, then
	\[
		\int_{\Zp}f_j(t) \mathrm{d}t\equiv j^{2+\delta}p^{-\lfloor \log_p(nd') \rfloor-1} \mod p^{-\lfloor \log_p(nd') \rfloor-1+l+r}.
	\]
\end{prop}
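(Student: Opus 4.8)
The plan is to compute the Volkenborn integral $\int_{\Zp} f_j(t)\,\mathrm{d}t$ modulo a small power of $p$ by reducing $f_j$ to a much simpler function using \Cref{cor_Volkint}, i.e. by showing $\Delta(f_j - g_j)$ is large for an explicit, integrable $g_j$ whose Volkenborn integral is easy to evaluate. First I would analyze the factor $\binom{N(n)+Dt+j}{N(n)}^Q$: by \Cref{cor_chi} we have $\binom{N(n)+Dt+j}{N(n)}\equiv \mathcal{X}_{j,n}(t)\bmod p$, and since $\mathcal{X}_{j,n}$ is the characteristic function of the residue class $t\equiv -(d')^{-1}\lfloor j/p^l\rfloor \bmod p^{m}$ with $m=\lfloor\log_p(nd')\rfloor+1$, it is idempotent, so $\mathcal{X}_{j,n}^Q = \mathcal{X}_{j,n}$. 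I would then use part $(d)$ of \Cref{lem_df} repeatedly (raising to the $p$-th power $\nu_p(Q)=l+r+1$ times, and noting $Q=p^{l+r+1}$) to promote the mod-$p$ congruence $\binom{N(n)+Dt+j}{N(n)}\equiv \mathcal{X}_{j,n}(t)$ to the statement that $\Delta\bigl(\binom{N(n)+Dt+j}{N(n)}^Q - \mathcal{X}_{j,n}(t)\bigr)\geq l+r+1$ (after checking the hypothesis $f\equiv g\bmod p$ at each stage, which holds since both sides stay in the residue class structure).

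Next I would handle the rational factor $\dfrac{(Dt+j)^{2+\delta}}{\prod_{i=0}^n (D(t+i)+j)^s}$. On the support of $\mathcal{X}_{j,n}$, i.e. for $t$ in the fixed residue class mod $p^m$, the denominator factors $D(t+i)+j$ are units for most $i$ and contribute controlled $p$-adic valuations; using $(p-1)\mid s$ and $p^{l+r}\mid(n+1)$, the product $\prod_{i=0}^n (D(t+i)+j)^s$ over the relevant range should reduce modulo a suitable power of $p$ to something congruent to a power of $(Dt+j)$, exploiting that raising units to the power $s$ (a multiple of $p-1$) kills the Teichmüller part and that the number of terms $n+1$ being divisible by $p^{l+r}$ makes the "tail" contributions cancel in pairs or vanish. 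The upshot I expect is $f_j(t) \equiv \mathcal{X}_{j,n}(t)\cdot j^{2+\delta} \cdot (\text{unit correction that is }\equiv 1)$ modulo $p^{l+r}$ after combining with $\Delta$-estimates from parts $(a),(b),(f)$ of \Cref{lem_df}. Then \Cref{cor_Volkint} gives $\int_{\Zp} f_j\,\mathrm{d}t \equiv j^{2+\delta}\int_{\Zp}\mathcal{X}_{j,n}(t)\,\mathrm{d}t \bmod p^{l+r}$, and since $\mathcal{X}_{j,n}=\chi_k$ is a wavelet for the class mod $p^m$ with $l(k)=m=\lfloor\log_p(nd')\rfloor+1$, the formula $\int_{\Zp}\chi_k = p^{-l(k)}$ yields exactly $j^{2+\delta}p^{-\lfloor\log_p(nd')\rfloor-1}$, establishing the claim.

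**Main obstacle.** The delicate point is the second step: turning the denominator $\prod_{i=0}^n(D(t+i)+j)^s$ into a clean power of $(Dt+j)$ modulo $p^{l+r}$. One must track, on the residue class $t\equiv t_0 \bmod p^{m}$, exactly which factors $D(t+i)+j$ are non-units (those with $D(t_0+i)+j\equiv 0$), count them, and show that after taking $s$-th powers and using $p^{l+r}\mid(n+1)$ the product of the unit factors telescopes modulo $p^{l+r}$ — this is where the hypotheses $(p-1)\mid s$ and $p^{l+r}\mid(n+1)$ are consumed, and where a careful Wilson-type / Fermat-quotient argument grouping the factors $D(t+i)+j$ for $i$ in a complete set of residues is needed. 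Managing the interaction between this estimate and the $\Delta$-bound coming from the binomial-power term (ensuring the weaker of the two bounds is still $\geq l+r$, not just $\geq$ something smaller) requires bookkeeping but should go through given the $+1$ slack built into $Q=p^{l+r+1}$.
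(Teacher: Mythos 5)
Your first step follows the paper's first step: reduce $\binom{N(n)+Dt+j}{N(n)}^{Q}$ to the characteristic function $\mathcal{X}_{j,n}$ via \Cref{cor_chi} together with $\nu_p(Q)=l+r+1$ iterations of \Cref{lem_df}(d). But your bookkeeping is off in a way that matters: the starting point of the iteration is $\Delta\bigl(\binom{N(n)+Dt+j}{N(n)}-\mathcal{X}_{j,n}\bigr)\geq -m$ with $m=\lfloor\log_p(nd')\rfloor+1$, not $\geq 0$, because the characteristic function of a coset modulo $p^{m}$ has $\Delta=-m$ and the binomial factor (after the $[p^l]$-precomposition trick of \Cref{lem_df}(c)) also only satisfies $\Delta\geq -m$. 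After raising to the $Q$-th power one therefore gets $\Delta\geq -m+l+r+1$, not $\geq l+r+1$, and the final congruence holds modulo $p^{-m+l+r}$ --- which is exactly what the proposition asserts --- and not modulo $p^{l+r}$ as you claim; the latter is a far stronger statement and your $\Delta$-estimates do not support it.

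The second step is where your proposal genuinely departs from the paper and where the real gap lies. There are no non-unit denominator factors to track, and no Wilson-type, Fermat-quotient, or telescoping argument to run: since $p^{l}\mid D$ and $p\nmid j$, \emph{every} factor $D(t+i)+j$ is a $p$-adic unit, and the whole rational factor $(Dt+j)^{2+\delta}\prod_{i=0}^{n}(D(t+i)+j)^{-s}$ is, coefficientwise as a power series convergent on $\Zp$, congruent to the constant $j^{2+\delta-(n+1)s}$ modulo $p^{l}$. Parts (b) and (f) of \Cref{lem_df} then replace the integrand by $j^{2+\delta-(n+1)s}\,\mathcal{X}_{j,n}$, whose Volkenborn integral is $j^{2+\delta-(n+1)s}p^{-m}$ by the wavelet formula, as you say. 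The hypotheses $(p-1)\mid s$ and $p^{l+r}\mid(n+1)$ enter only at the very last moment and only through Euler's theorem: $\phi(p^{l+r+1})=p^{l+r}(p-1)$ divides $(n+1)s$, hence $j^{(n+1)s}\equiv 1\bmod p^{l+r+1}$ and $j^{2+\delta-(n+1)s}\equiv j^{2+\delta}$ to the required precision. Your sketch never isolates this mechanism: ``killing the Teichm\"uller part'' gestures at $(p-1)\mid s$, but the role of $p^{l+r}\mid(n+1)$ is not to make ``tail contributions cancel in pairs''; it lifts the congruence $j^{(n+1)s}\equiv 1$ from modulus $p$ to modulus $p^{l+r+1}$. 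The machinery you single out as the main obstacle would be chasing a difficulty that is not present, while the actual (one-line) use of the divisibility hypotheses is missing.
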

\begin{proof}Recall, $N(n)=p^l(p^m-1)$ with  $m:=\lfloor \log_p(nd') \rfloor+1$. We will make use of the basic properties of $\Delta(f)$ studied in \Cref{lem_df}. From \Cref{lem_df} $(e)$, we get
\[
	\Delta(\binom{p^l(p^m-1)+d't+j}{p^l(p^m-1)})\geq -l-m.
\]
Now, precomposing with multiplication by $p^l$ gives the inequality
\[
	\Delta(\binom{p^l(p^m-1)+d'p^lt+j}{p^l(p^m-1)})\geq -m,
\]
according to \Cref{lem_df} $(c)$. From \Cref{lem_df} $(d)$, \Cref{cor_chi} and $\mathcal{X}_{j,m}^Q=\mathcal{X}_{j,m}$, we get
\begin{align*}
	&\Delta\left(\binom{p^l(p^m-1)+d'p^lt+j}{p^l(p^m-1)}^Q-\mathcal{X}_{j,m}\right) \\ 
	\geq& \Delta\left(\binom{p^l(p^m-1)+d'p^lt+j}{p^l(p^m-1)}-\mathcal{X}_{j,m}\right)+l+r+1\geq  -m+l+r+1.
\end{align*}
The properties $(b)$ and $(f)$ of \Cref{lem_df} along with $m>l+r$ allow us to estimate
\[
	\Delta\left( f_j(t)- \mathcal{X}_{j,m}\cdot \frac{(Dt+j)^{2+\delta}}{\prod_{i=0}^{n}(D(t+i)+j)^s} \right)\geq -m+l+r+1.
\]
Applying \Cref{cor_Volkint} allows us to compute the Volkenborn integral of $f_j$ modulo $p^{-m+l+r}$:
\[
	\int_{\Zp}f_j(t)\mathrm{d}t\equiv \int_{\Zp} \mathcal{X}_{j,m}\cdot \frac{(Dt+j)^{2+\delta}}{\prod_{i=0}^{n}(D(t+i)+j)^s} \mathrm{d}t\equiv p^{-m}j^{2+\delta-(n+1)s}\equiv p^{-m}j^{2+\delta} \mod p^{-m+l+r}.
\]
Here, we have used the hypothesis $\gcd(j,p)=1$ and $p^{l+r}(p-1)|(n+1)s$.
 \end{proof}

Let us now recall the following well-known fact about generalized Bernoulli numbers:
\begin{lem}[{\cite[7.11]{washington}}]\label{lem_Bn}
	For a positive integer $k$, we have the congruence
	\[
		(1-\chi(p)p^{k-1})B_{k,\chi}\equiv \frac{1}{D}\sum_{\substack{1\leq j\leq D \\ \gcd(j,p)=1}}\chi(j)j^k \mod p^{l-1}.
	\]
\end{lem}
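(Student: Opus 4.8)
The plan is to reduce the congruence, by means of a multiplication (distribution) formula, to an elementary estimate on a twisted power sum. Write $\psi$ for the Dirichlet character modulo $D$ induced by $\chi$; since $\psi(a)=\chi(a)$ whenever $\gcd(a,p)=1$ and $\psi(a)=0$ whenever $p\mid a$, the right-hand side of the lemma equals $\tfrac1D\sum_{a=1}^{D}\psi(a)a^k$. On the other hand, the standard multiplication formula for generalized Bernoulli numbers yields
\[
	D^{k-1}\sum_{a=1}^{D}\psi(a)B_k(a/D)\;=\;B_{k,\chi}\prod_{\substack{q\mid D\\ q\nmid d}}\bigl(1-\chi(q)q^{k-1}\bigr)\;=\;(1-\chi(p)p^{k-1})B_{k,\chi},
\]
where $B_k(x)$ is the ordinary Bernoulli polynomial; the product runs over at most the single prime $q=p$ since $l\geq l_0$, and it equals $1$ (consistently with $\chi(p)=0$) when $p\mid d$. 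Thus it suffices to prove that $D^{k-1}\sum_{a}\psi(a)B_k(a/D)\equiv\tfrac1D\sum_{a}\psi(a)a^k\mod p^{l-1}$; see \cite{washington} for the formulas invoked.

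Expanding $B_k(x)=\sum_{i=0}^{k}\binom{k}{i}B_ix^{k-i}$ gives $D^{k-1}B_k(a/D)=\tfrac{a^k}{D}+\sum_{i=1}^{k}\binom{k}{i}B_i\,a^{k-i}D^{i-1}$, and summing against $\psi(a)$ we obtain
\[
	D^{k-1}\sum_{a=1}^{D}\psi(a)B_k(a/D)-\frac1D\sum_{a=1}^{D}\psi(a)a^k\;=\;E:=\sum_{i=1}^{k}\binom{k}{i}B_iD^{i-1}S_{k-i},\qquad S_m:=\sum_{a=1}^{D}\psi(a)a^m.
\]
It remains to show $\nu_p(E)\geq l-1$. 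For the terms with $i\geq2$ this is immediate: von Staudt--Clausen gives $\nu_p(B_i)\geq-1$, each $S_m$ is an algebraic integer so $\nu_p(S_m)\geq0$, and $\nu_p(D^{i-1})=(i-1)l\geq l$; hence each such term has valuation $\geq l-1$.

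The linear term $i=1$, namely $\binom{k}{1}B_1D^{0}S_{k-1}=-\tfrac{k}{2}S_{k-1}$, is the delicate point: since $B_1=-\tfrac12$ has $2$-adic valuation $-1$, one needs $\nu_p(S_{k-1})\geq l$ rather than merely $\geq 0$. This is where the parity of $k$ enters, and why it is enough to treat $k\equiv\delta\mod 2$ (the only case used below, e.g.\ $k=2+\delta$ in \Cref{prop_intbinom}): then $k-1\not\equiv\delta\mod 2$, so under the involution $a\mapsto D-a$ one has $\psi(D-a)=\chi(-1)\psi(a)$ and $(D-a)^{k-1}\equiv(-a)^{k-1}\mod D$, whence each pair contributes $\psi(a)a^{k-1}\bigl(1+\chi(-1)(-1)^{k-1}\bigr)\equiv0\mod D$; the unique fixed point $a=D/2$ of the involution (present only when $2\mid D$) causes no trouble, as $\psi(D/2)=0$. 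Hence $S_{k-1}\equiv0\mod D$, so $\nu_p(S_{k-1})\geq l$ and $\nu_p(-\tfrac{k}{2}S_{k-1})\geq l-1$; together with the estimate of the previous paragraph this gives $\nu_p(E)\geq l-1$, proving the congruence.

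I expect this linear ($B_1$) term, together with the $p$-adic bookkeeping at $p=2$, to be the main obstacle: it is precisely the hypothesis $k\equiv\delta$ that lets the symmetry $a\mapsto D-a$ gain the extra factor $D$ needed to absorb the denominator of $B_1$. When $k\not\equiv\delta$ the left-hand side of the lemma vanishes by \Cref{lem_binomial_nontrivial}, and one argues similarly but with a little more care; since this case does not occur in the sequel I would not elaborate on it, and one may in any event simply cite \cite[7.11]{washington}.
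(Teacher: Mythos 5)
Your proof is correct (for the parity class it treats), but it is a genuinely different route from the paper: the paper gives no proof of this lemma at all and simply cites \cite[Prop.~7.11]{washington}, whereas you supply a self-contained argument. Your reduction is the standard one -- identify the right-hand side as $\tfrac1D\sum_a\psi(a)a^k$ for the induced character $\psi$ mod $D$, use $D^{k-1}\sum_a\psi(a)B_k(a/D)=B_{k,\chi}(1-\chi(p)p^{k-1})$ (valid here because $D/d$ is a power of $p$, so $p$ is the only prime that can divide $D$ but not $d$), and then estimate the error $E=\sum_{i\ge1}\binom{k}{i}B_iD^{i-1}S_{k-i}$ term by term. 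The bounds $\nu_p(B_i)\ge-1$, $\nu_p(S_m)\ge0$, $\nu_p(D^{i-1})\ge l$ for $i\ge2$, and the symmetry argument $a\mapsto D-a$ giving $S_{k-1}\equiv0\bmod D$ for the $B_1$ term, all check out (including the $p=2$ bookkeeping, where the $i=1$ term still lands at valuation $\ge l-1$). What this buys is transparency about exactly where the modulus $p^{l-1}$ comes from; what it costs is generality: your handling of the $i=1$ term genuinely uses $k\equiv\delta\bmod2$, so as written you have not proved the lemma for all positive $k$ as stated. You flag this honestly, and you are right that only $k=2+\delta$ is invoked (in \Cref{prop_padicEstimate}), so the gap is harmless for the paper; still, if one wanted the statement verbatim one would either restrict the hypothesis to $k\equiv\delta\bmod2$ or fall back on the citation, since in the opposite parity the vanishing of $B_{k,\chi}$ forces the stronger bound $\nu_p(S_k)\ge 2l-1$ on the right-hand side, which your pairing argument alone does not deliver.
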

The following proposition is the main step in controlling the $p$-adic decay of the linear forms 
\[
	\Lambda_n\left(1,L_p(2,\chi\omega^{-1}),\dots,L_p(s+1,\chi\omega^{s})\right).
\]

\begin{prop}\label{prop_padicEstimate}
Let $s$ and $n$ be positive integers with $(p-1)\mid s$ and $p^{r+l}\mid (n+1)$. Then
\begin{multline*}
	\nu_p\left(\sum_{\substack{1\leq j\leq D \\ \gcd(j,p)=1}} \chi(j)\int_{\Zp} R_n(t+\frac{j}{D}) \mathrm{d}t \right)\\
	=s\nu_p(n!)+Q\nu_p\left(\binom{N(n)}{\underline{n}}\right)+((n+1)\cdot s+1)\cdot l -\lfloor \log_p (nd') \rfloor-1+\nu_p(B_{2+\delta,\chi}).
\end{multline*}
\end{prop}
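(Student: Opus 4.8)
The plan is to strip from $R_n$ the factor that does not depend on $j$, reduce to the Volkenborn integrals $\int_{\Zp}f_j$ already computed in \Cref{prop_intbinom}, and then read off the $p$-adic valuation of the resulting character sum from the Kummer-type congruence of \Cref{lem_Bn}. \emph{Step 1 (isolating the $j$-independent factor).} From the elementary identity $\left(t+\tfrac{j}{D}\right)_{n+1}=D^{-(n+1)}\prod_{i=0}^{n}\bigl(D(t+i)+j\bigr)$ one sees directly from \eqref{eq_def_Rn} that, as rational functions in $t$,
\[
	R_n\!\left(t+\tfrac{j}{D}\right)=n!^{s}\binom{N(n)}{\underline{n}}^{Q}D^{(n+1)s}\,f_j(t),
\]
with $f_j$ the function of \Cref{prop_intbinom}. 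Since $j$ is prime to $p$ and $p^{l}\mid D$, the denominator of $f_j$ takes unit values on $\Zp$, so $f_j\in C^1(\Zp,\Qp)$ and, by linearity of the Volkenborn integral,
\[
	\int_{\Zp}R_n\!\left(t+\tfrac{j}{D}\right)\mathrm{d}t=n!^{s}\binom{N(n)}{\underline{n}}^{Q}D^{(n+1)s}\int_{\Zp}f_j(t)\,\mathrm{d}t.
\]
As $\nu_p$ is additive and $\nu_p(D)=l$, this already accounts for the terms $s\nu_p(n!)$, $Q\,\nu_p\bigl(\binom{N(n)}{\underline{n}}\bigr)$ and $(n+1)s\cdot l$ of the asserted formula, and it remains to prove that $\nu_p\bigl(\sum_{j}\chi(j)\int_{\Zp}f_j(t)\,\mathrm{d}t\bigr)=l+\nu_p(B_{2+\delta,\chi})-\lfloor\log_p(nd')\rfloor-1$.

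\emph{Step 2 (the character sum).} The hypotheses $(p-1)\mid s$ and $p^{r+l}\mid(n+1)$ are exactly those required to apply \Cref{prop_intbinom} to each $f_j$: with $m:=\lfloor\log_p(nd')\rfloor+1$ we get $\int_{\Zp}f_j(t)\,\mathrm{d}t\equiv j^{2+\delta}p^{-m}\bmod p^{-m+l+r}$. Multiplying by the units $\chi(j)$ (or by $0$, when $\gcd(j,d)>1$) and summing over the finitely many admissible $j$ preserves the congruence, so
\[
	\sum_{\substack{1\le j\le D\\ \gcd(j,p)=1}}\chi(j)\int_{\Zp}f_j(t)\,\mathrm{d}t\equiv p^{-m}\sum_{\substack{1\le j\le D\\ \gcd(j,p)=1}}\chi(j)\,j^{2+\delta}\bmod p^{-m+l+r}.
\]
Applying \Cref{lem_Bn} with $k=2+\delta$ and multiplying through by $D$ gives $\sum_{j}\chi(j)j^{2+\delta}=D\bigl(1-\chi(p)p^{1+\delta}\bigr)B_{2+\delta,\chi}+E$ with $\nu_p(E)\ge\nu_p(D)+l-1=2l-1$. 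Here $1-\chi(p)p^{1+\delta}$ is a unit in $\Zp$, and by the definition \eqref{def_r} of $r$ one has $\nu_p(B_{2+\delta,\chi})<r$; hence the first summand has valuation $l+\nu_p(B_{2+\delta,\chi})<l+r$, which is strictly below $2l-1$ as soon as $l>r$ (as will be arranged by the eventual choice of $l$). Consequently $\nu_p\bigl(\sum_{j}\chi(j)j^{2+\delta}\bigr)=l+\nu_p(B_{2+\delta,\chi})$.

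\emph{Step 3 (conclusion).} It follows that $\nu_p\bigl(p^{-m}\sum_{j}\chi(j)j^{2+\delta}\bigr)=-m+l+\nu_p(B_{2+\delta,\chi})$, and this is strictly smaller than $-m+l+r$ because $\nu_p(B_{2+\delta,\chi})<r$; therefore the congruence of Step 2 determines the valuation of its left-hand side, namely $\nu_p\bigl(\sum_{j}\chi(j)\int_{\Zp}f_j\bigr)=-m+l+\nu_p(B_{2+\delta,\chi})$. Substituting this into the identity of Step 1 and recalling $m=\lfloor\log_p(nd')\rfloor+1$ yields precisely
\[
	s\nu_p(n!)+Q\,\nu_p\bigl(\binom{N(n)}{\underline{n}}\bigr)+\bigl((n+1)s+1\bigr)l-\lfloor\log_p(nd')\rfloor-1+\nu_p(B_{2+\delta,\chi}),
\]
as claimed.

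The only genuinely delicate point is the bookkeeping of precisions: \Cref{prop_intbinom} controls the integrals $\int_{\Zp}f_j$ only modulo $p^{-m+l+r}$, so one must verify that the contribution of $B_{2+\delta,\chi}$ sits strictly below this level — which is exactly why $r$ is defined as $\lfloor\nu_p(B_{2+\delta,\chi})\rfloor+1$ — while simultaneously the $O(p^{l-1})$ error in \Cref{lem_Bn} must not absorb the main term, which is what forces the eventual choice of $l$ to satisfy $l>r$. Everything else is a direct unwinding of the definitions.
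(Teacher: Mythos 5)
Your proposal is correct and follows essentially the same route as the paper: factor the $j$-independent part $n!^{s}\binom{N(n)}{\underline{n}}^{Q}D^{(n+1)s}$ out of the integral, apply \Cref{prop_intbinom} to each $\int_{\Zp}f_j$, feed the resulting character sum into \Cref{lem_Bn}, and read off the valuation from the fact that $\nu_p(B_{2+\delta,\chi})<r$. The one point where you go beyond the paper is in explicitly noting that the $O(p^{l-1})$ error in \Cref{lem_Bn} only stays below the main term when $l-1>\nu_p(B_{2+\delta,\chi})$ (e.g.\ $l>r$); the paper's proof uses this silently, and it is indeed only guaranteed by the eventual choice $l=\ell(s)$ for $s$ large.
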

\begin{proof}
We have
\begin{align*}
	\frac{1}{D\cdot n!^{s}\cdot D^{s\cdot (n+1)}\cdot \binom{N(n)}{\underline{n}}^Q}\sum_{\substack{1\leq j\leq D \\ \gcd(j,p)=1}} \chi(j)\int_{\Zp} R_n(t+\frac{j}{D}) \mathrm{d}t&=\frac{1}{D}\sum_{\substack{1\leq j\leq D \\ \gcd(j,p)=1}} \chi(j)\int_{\Zp} f_j(t) \mathrm{d}t
\end{align*}
with
\[
	f_j(t)=\binom{N(n)+Dt+j}{N(n)}^{Q}\frac{(Dt+j)^{2+\delta}}{\prod_{i=0}^{n}(D(t+i)+j)^s}.
\]
From \Cref{prop_intbinom} and \Cref{lem_Bn}, we know
\[
	\sum_{\substack{1\leq j\leq D \\ \gcd(j,p)=1}} \chi(j)\int_{\Zp} f_j(t) \mathrm{d}t\equiv p^{-m}(1-\chi(p)p^{1+\delta})B_{2+\delta,\chi} \mod p^{-m+l+r}
\]
with $m=\lfloor \log_p (nd')\rfloor+1$. We deduce
\[
	\frac{1}{D}\sum_{\substack{1\leq j\leq D \\ \gcd(j,p)=1}} \chi(j)\int_{\Zp} f_j(t) \mathrm{d}t\equiv p^{-m}(1-\chi(p)p^{1+\delta})B_{2+\delta,\chi} \mod p^{-m+r}.
\]
Taking $p$-adic valuations and observing $B_{2+\delta,\chi}\not\equiv 0 \mod p^r$ gives
\[
	\nu_p\left(\frac{1}{D}\sum_{\substack{1\leq j\leq D \\ \gcd(j,p)=1}} \chi(j)\int_{\Zp} f_j(t) \mathrm{d}t\right) =-m+\nu_p(B_{2+\delta,\chi})
\]
and the statement of the proposition follows.
\end{proof}

\Cref{prop_padicEstimate} only applies for integers $n$ such that $n+1$ is divisible by $p^{l+r}$. This forces us to pass to a suitable sub-sequence. Let us define 
\begin{equation}
	\sigma(n):=p^{l+r}n-1.
\end{equation}

\begin{cor}\label{cor_padic} Let $s$ be divisible by $(p-1)$. Then
\[
	\left| \Lambda_{\sigma(n)}\left(1,(L_p(i,\chi\omega^{1-i}))_{i=2}^{s+1}\right) \right|_p= \exp\left( -\sigma(n)\left(\tau_p(l,s)+o(1) \right) \right),
\]
as $n\rightarrow\infty$, with
\[
	\tau_p(l,s)=s\log p\left(l+\frac{1}{p-1}\right).
\]
\end{cor}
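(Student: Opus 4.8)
The plan is to read the $p$-adic valuation of $\Lambda_{\sigma(n)}$ off \Cref{prop_padicEstimate} and then estimate each term asymptotically. First I would write $m:=\sigma(n)=p^{l+r}n-1$, so that $m+1=p^{l+r}n$ is divisible by $p^{l+r}$; together with the standing hypothesis $(p-1)\mid s$, this is precisely what \Cref{prop_padicEstimate} requires for the index $m$. Applying the identity \eqref{cor_L_linforms} with $n$ replaced by $m$, and then \Cref{prop_padicEstimate}, one obtains
\begin{multline*}
	\nu_p\Big(\Lambda_m\big(1,(L_p(i,\chi\omega^{1-i}))_{i=2}^{s+1}\big)\Big)
	= \nu_p\big((s-1)!\,\dn{m}^{s-1}\big) + s\,\nu_p(m!) + Q\,\nu_p\!\left(\binom{N(m)}{\underline{m}}\right) \\
	{} + \big((m+1)s+1\big)\,l - \lfloor\log_p(md')\rfloor - 1 + \nu_p(B_{2+\delta,\chi}).
\end{multline*}

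Next I would sort these terms by order of magnitude as $n\to\infty$, keeping in mind that $l$, $r$, $s$, $Q$, $D$ and $\chi$ are all fixed. The quantities $\nu_p((s-1)!)$ and $\nu_p(B_{2+\delta,\chi})$ are constants, while $\nu_p(\dn{m}^{s-1})=(s-1)\lfloor\log_p m\rfloor$ and $\lfloor\log_p(md')\rfloor$ are $O(\log m)=o(m)$. By Legendre's formula $\nu_p(m!)=\tfrac{m-s_p(m)}{p-1}$ with $s_p(m)$ the $p$-adic digit sum of $m$, so $s\,\nu_p(m!)=\tfrac{s}{p-1}m+o(m)$. For the multinomial I would use that $\binom{N(m)}{\underline{m}}=\frac{N(m)!}{(m!)^{q}\,\rho!}$ with $q=\lfloor N(m)/m\rfloor$ and $\rho=N(m)-qm$, so that $\nu_p\big(\binom{N(m)}{\underline{m}}\big)=\tfrac{1}{p-1}\big(q\,s_p(m)+s_p(\rho)-s_p(N(m))\big)$; since $N(m)=p^l(p^M-1)$ with $M=\lfloor\log_p(d'm)\rfloor+1$ has digit sum $(p-1)M=O(\log m)$, and since $q$ is bounded (indeed $q\le pD$ for $m$ large, because $N(m)$ is the \emph{least} integer $\ge Dm$ of that shape), this whole quantity is $O(\log m)$, whence $Q\,\nu_p\big(\binom{N(m)}{\underline{m}}\big)=o(m)$. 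Finally $\big((m+1)s+1\big)l=slm+O(1)$. Adding everything up gives $\nu_p\big(\Lambda_m(1,\ldots)\big)=\big(l+\tfrac{1}{p-1}\big)s\,m+o(m)$.

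The corollary then follows by exponentiating and using $\sigma(n)\to\infty$ to absorb the lower-order error:
\[
	\big|\Lambda_{\sigma(n)}(1,\ldots)\big|_p=p^{-\nu_p(\cdots)}=\exp\!\Big(-\sigma(n)\,\log p\,\big((l+\tfrac{1}{p-1})s+o(1)\big)\Big)=\exp\!\big(-\sigma(n)(\tau_p(l,s)+o(1))\big).
\]
I do not anticipate a serious obstacle here; the one step requiring a small argument is the bound $Q\,\nu_p\big(\binom{N(m)}{\underline{m}}\big)=o(m)$, which hinges on exploiting both the special shape $N(m)=p^l(p^M-1)$ — so its $p$-adic digit sum grows only logarithmically in $m$ — and the boundedness of the ratio $N(m)/m$, which is forced by the minimality built into the definition of $N(m)$.
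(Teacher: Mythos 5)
Your proposal is correct and follows essentially the same route as the paper: apply \eqref{cor_L_linforms} and \Cref{prop_padicEstimate} at the index $\sigma(n)$, use Legendre's formula to extract the main term $\tfrac{s}{p-1}\sigma(n)$ from $s\,\nu_p(\sigma(n)!)$, and show $Q\,\nu_p\bigl(\binom{N}{\underline{\sigma(n)}}\bigr)=o(\sigma(n))$ via the boundedness of $\lfloor N(n)/n\rfloor$ and the logarithmic size of the remaining contributions. The only cosmetic difference is that you use the exact digit-sum form of Legendre's formula for the multinomial coefficient where the paper sandwiches it with the inequality $\tfrac{m}{p-1}-\log_p m\le\nu_p(m!)\le\tfrac{m}{p-1}$; both yield the same $O(\log n)$ bound.
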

\begin{proof}
The $p$-adic valuation of
\[
d_{\sigma(n)}=\lcm(1,\dots,\sigma(n))=\prod_{p \text{ prime}} p^{\lfloor\log_p(\sigma(n))\rfloor}
\] 
is given by $\nu_p(\dn{\sigma(n)})=\lfloor \log_p(\sigma(n)) \rfloor$. By \eqref{cor_L_linforms} and \Cref{prop_padicEstimate}, we get the following estimate for the $p$-adic valuation
\begin{multline*}
	\nu_p\left(\Lambda_{\sigma(n)}\left(1,L_p(2,\chi\omega^{-1}),\dots,L_p(s+1,\chi\omega^{s})\right)\right)\\
	= s\nu_p(\sigma(n)!)+Q\nu_p\left(\binom{N(\sigma(n))}{\underline{\sigma(n)}}\right)+\sigma(n) s l + o(n),\quad \text{as }n\rightarrow\infty.
\end{multline*}
For a positive integer $m$, we have, by Legendre's formula:
\[
	\frac{m}{p-1}-\log_p(m)\leq\nu_p(m!)\leq \frac{m}{p-1}.
\]
In particular, we get
\begin{multline*}
	0\leq \nu_p\left(\binom{N(n)}{\underline{n}}\right)=\nu_p\left(\frac{N(n)!}{n!^{\lfloor N(n)/n \rfloor}(N(n)-n\lfloor N(n)/n \rfloor)!}\right)\\
	\leq \frac{N(n)}{p-1}-\left\lfloor \frac{N(n)}{n} \right\rfloor\left( \frac{n}{p-1}-\log_p\left(n \right) \right)-\left( \frac{N(n)-n \left\lfloor \frac{N(n)}{n} \right\rfloor }{p-1} -\log_p\left( N(n)-n \left\lfloor \frac{N(n)}{n} \right\rfloor \right)\right)\\
	=\left\lfloor \frac{N(n)}{n} \right\rfloor \log_p\left(n \right)+\log_p\left( N(n)-n \left\lfloor \frac{N(n)}{n} \right\rfloor \right)\leq (Dp+1)\log_p(n)=o(n)  ,\quad \text{as }n\rightarrow \infty.
\end{multline*}
Hence the only contribution comes from the terms $s\nu_p(\sigma(n)!)$ and $\sigma(n) s l$, as $n\rightarrow \infty$:
\begin{equation*}
\nu_p\left(\Lambda_{\sigma(n)}\left(1,L_p(2,\chi\omega^{-1}),\dots,L_p(s+1,\chi\omega^{s})\right)\right)=\sigma(n)\left[ sl+ \frac{s}{p-1} +o(1) \right].
\end{equation*} 
%
\end{proof}

\section{Proof of the main result}\label{sec_2}
Let $\chi$ be a Dirichlet character of conductor $d$, $\epsilon>0$ a real number and $K\subseteq \bar{\QQ}$ a number field. We may assume, without loss of generality, that $\QQ(\chi)$ is contained in $K$. In this section, we prove \Cref{thm_main}, i.e. we show the following estimate for $s$ sufficiently large
\[
	\dim_{K}\left( K+\sum_{i=2}^{s}L_p(i,\chi\omega^{1-i})K  \right)\geq \frac{(1-\epsilon)\log s}{2[K:\QQ]\left( 1+\log(2)\right)}.
\]
We keep the setup from \Cref{main_notation}. Our next aim is to choose the parameter $l$ suitably depending on $\chi$ and $s$. Let us denote by $W$ the inverse of the function
$$\mathbb{R}_{>0}\rightarrow \mathbb{R}_{>0},\quad x\mapsto x\exp(x).$$
This function is called the \emph{Lambert $W$-function}.
\begin{lem}\label{lem_Lambert}
Let us define the function $\ell \colon \ZZ_{\geq 0}\rightarrow \ZZ_{\geq 0}$ by
\begin{equation}\label{eq_l}
	\ell(s):=\left\lfloor \frac{W(\frac{2s\cdot \epsilon}{3d'p^{r+2}})}{2\log p} \right\rfloor.
\end{equation}
For all sufficiently large integers $s$, we have the estimate
\begin{equation}\label{eq_ls}
	\frac{\tau_p(\ell(s),s)}{\tau_\infty(\ell(s),s)}\geq (1-\epsilon)\frac{\log s}{2[K:\QQ](1+\log(2))},
\end{equation}
where $\tau_p(\ell(s),s)$ and $\tau_\infty(\ell(s),s)$ are the quantities appearing in \Cref{cor_padic} and \Cref{cor_growth} for $l:=\ell(s)$, i.e.
\begin{align*}
	\tau_p(\ell(s),s)&=s\log p\left(\ell(s)+\frac{1}{p-1}\right)\\
	\tau_\infty(\ell(s),s)&=[K:\QQ]\left( s\log 2+d'p^{r+2+2\ell(s)}\log(d'p^{1+\ell(s)})+s\right).
\end{align*}
\end{lem}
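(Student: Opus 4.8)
The statement is a purely elementary asymptotic estimate about the explicitly given function $\ell(s) = \lfloor W(\tfrac{2s\epsilon}{3d'p^{r+2}})/(2\log p)\rfloor$, so the proof should be a direct computation of $\tau_p(\ell(s),s)/\tau_\infty(\ell(s),s)$ using the basic asymptotics of the Lambert $W$-function, namely $W(x)\to\infty$ and $W(x)\sim\log x$ as $x\to\infty$, together with the defining identity $W(x)e^{W(x)}=x$. I would first introduce the shorthand $\lambda := \ell(s)$ and $y := W(\tfrac{2s\epsilon}{3d'p^{r+2}})$, so that $\lambda = \lfloor y/(2\log p)\rfloor$, hence $p^{2\lambda}\le e^{y}$ and more precisely $p^{2\lambda} = e^{y}\cdot e^{O(\log p)}$; crucially $\lambda\to\infty$ as $s\to\infty$ since $y\to\infty$. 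The key algebraic move is to substitute this into the denominator: the dominant term of $\tau_\infty(\lambda,s)$ is $[K:\QQ]\,d'p^{r+2+2\lambda}\log(d'p^{1+\lambda})$, and $d'p^{r+2+2\lambda} = d'p^{r+2}\cdot p^{2\lambda} \le d'p^{r+2}e^{y}$. From the identity $ye^{y} = \tfrac{2s\epsilon}{3d'p^{r+2}}$ we get $d'p^{r+2}e^{y} = \tfrac{2s\epsilon}{3y}$, so $d'p^{r+2+2\lambda}\le \tfrac{2s\epsilon}{3y}$, which turns the leading term of the denominator into $[K:\QQ]\cdot\tfrac{2s\epsilon}{3y}\cdot\log(d'p^{1+\lambda})$.

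Next I would estimate the logarithmic factor: $\log(d'p^{1+\lambda}) = \log d' + (1+\lambda)\log p$, and since $\lambda\log p \le y/2$ this is $\le \tfrac{y}{2} + O(1)$, hence $\log(d'p^{1+\lambda}) = \tfrac{y}{2}(1+o(1))$ as $s\to\infty$. Therefore the leading term of $\tau_\infty$ is at most $[K:\QQ]\cdot\tfrac{2s\epsilon}{3y}\cdot\tfrac{y}{2}(1+o(1)) = [K:\QQ]\cdot\tfrac{s\epsilon}{3}(1+o(1))$. The remaining terms of $\tau_\infty$ are $[K:\QQ](s\log 2 + s) = O(s)$, so altogether $\tau_\infty(\lambda,s) \le [K:\QQ]\cdot\tfrac{s\epsilon}{3}(1+o(1)) + O(s)$; in fact one needs to be slightly careful here since $\epsilon$ is fixed while $s\to\infty$, so the $O(s)$ term is not negligible compared to $\tfrac{s\epsilon}{3}$ — I would instead write $\tau_\infty(\lambda,s) \le s[K:\QQ](\tfrac{\epsilon}{3} + \log 2 + 1 + o(1))$, keeping all the $s$-linear terms together. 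For the numerator, $\tau_p(\lambda,s) = s\log p\,(\lambda + \tfrac{1}{p-1}) \ge s\log p\cdot\lambda$, and from $\lambda = \lfloor y/(2\log p)\rfloor \ge y/(2\log p) - 1$ we get $\tau_p(\lambda,s)\ge s(\tfrac{y}{2} - \log p) = \tfrac{sy}{2}(1+o(1))$ since $y\to\infty$. Finally, by the asymptotic $y = W(\tfrac{2s\epsilon}{3d'p^{r+2}})\sim \log s$ (the $\log\log s$ correction and the constants $\epsilon, d', p, r$ only contribute lower-order terms to $W$), we obtain $\tau_p(\lambda,s)\ge \tfrac{s\log s}{2}(1+o(1))$.

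Putting the two bounds together gives
\[
\frac{\tau_p(\ell(s),s)}{\tau_\infty(\ell(s),s)} \ge \frac{\tfrac{s\log s}{2}(1+o(1))}{s[K:\QQ]\bigl(\tfrac{\epsilon}{3}+\log 2 + 1 + o(1)\bigr)} = \frac{\log s}{2[K:\QQ]}\cdot\frac{1+o(1)}{1+\log 2 + \tfrac{\epsilon}{3} + o(1)}.
\]
Since $\tfrac{1}{1+\log 2 + \epsilon/3} \ge \tfrac{1-\epsilon}{1+\log 2}$ for all sufficiently small $\epsilon$ (and one may shrink $\epsilon$ at the outset without loss of generality, or absorb the discrepancy into the $o(1)$ and a harmless adjustment of $\epsilon$), the desired inequality $\tfrac{\tau_p(\ell(s),s)}{\tau_\infty(\ell(s),s)}\ge (1-\epsilon)\tfrac{\log s}{2[K:\QQ](1+\log 2)}$ follows for $s$ large enough. \textbf{The main obstacle} I anticipate is bookkeeping the fixed-$\epsilon$ versus $s\to\infty$ interaction correctly: the term $p^{2\lambda}$ is engineered via the Lambert $W$-function precisely so that $d'p^{r+2+2\lambda}$ is proportional to $s\epsilon/\log s$, making the "bad" denominator term comparable to $s\cdot\epsilon$ rather than dominating; one must verify that the floor functions, the additive constants inside the logarithms, and the subleading $\log\log s$ term in $W$ all genuinely wash out into $o(1)$ factors, and that the final comparison of constants $\tfrac{1}{1+\log 2+\epsilon/3}$ versus $\tfrac{1-\epsilon}{1+\log 2}$ is arranged so the claimed $\epsilon$ works (possibly after replacing $\epsilon$ by a smaller $\epsilon'$ at the start, which is legitimate since the statement is "for all $\epsilon>0$").
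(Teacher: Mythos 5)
Your proposal is correct and follows essentially the same route as the paper: both exploit the defining identity $W(x)e^{W(x)}=x$ to show that the troublesome term $d'p^{r+2+2\ell(s)}\log(d'p^{1+\ell(s)})$ in $\tau_\infty$ is at most $\tfrac{\epsilon}{3}s+o(s)$, and both use the asymptotics $W(x)\geq \log x - \log\log x$ (you phrase it as $W(x)\sim\log x$) to get $\tau_p(\ell(s),s)\geq \tfrac{s\log s}{2}(1+o(1))$. The only cosmetic difference is the final bookkeeping of constants: the paper arranges the losses as a product $(1-\tfrac{1}{3}\epsilon)(1-\tfrac{2}{3}\epsilon)\geq 1-\epsilon$, while you verify $\tfrac{1}{1+\log 2+\epsilon/3}\geq\tfrac{1-\epsilon}{1+\log 2}$ directly (which in fact holds for every $\epsilon>0$, so your hedge about shrinking $\epsilon$ is unnecessary).
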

\begin{proof}
Let us start with estimating the term
\begin{equation}\label{eq_Lem_Lambert1_1}
	d'p^{r+2+2\ell(s)}\log(d'p^{1+\ell(s)})=d'\ell(s)p^{r+2+2\ell(s)}\log (p)+d'p^{r+2+2\ell(s)}\log (d'p),
\end{equation}
appearing in $\tau_\infty(\ell(s),s)$. The first summand can be bounded as follows:
\begin{align}
\notag d'\ell(s)p^{r+2+2\ell(s)}\log (p)&=\frac{d'p^{r+2}}{2}2\log (p)\ell(s) \exp\left(2\log (p)\ell(s) \right)\\
\label{eq_Lem_Lambert1} &\leq \frac{d'p^{r+2}}{2}2\log (p) \left( \frac{W(\frac{2s\cdot \epsilon}{3d'p^{r+2}})}{2\log (p)} \right)  \exp\left(2 \log (p)\left( \frac{W(\frac{2s\cdot \epsilon}{3d'p^{r+2}})}{2\log (p)} \right) \right)\\
\notag &=\frac{d'p^{r+2}}{2}W\left(\frac{2s\cdot \epsilon}{3d'p^{r+2}}\right) \exp\left(W\left(\frac{2s\cdot \epsilon}{3d'p^{r+2}}\right) \right)=\frac{\epsilon}{3}s.
\end{align}
This estimate together with \eqref{eq_Lem_Lambert1_1} gives
\[
	d'p^{r+2+2\ell(s)}\log(d'p^{1+\ell(s)})\leq \frac{\epsilon}{3}s+o(s) \text{ as } s\rightarrow\infty,
\]
and allows us to bound $\tau_\infty(\ell(s),s)$:
\begin{align*}
	\tau_\infty(\ell(s),s)\leq [K:\QQ]\left(s+ s\log 2+\frac{\epsilon}{3}s+o(s)\right) \text{ as } s\rightarrow\infty.
\end{align*}
So, for $s$ sufficiently large, we get
\begin{equation}\label{eq_lem_Lambert1}
\frac{s}{\tau_\infty(\ell(s),s)}\geq \frac{1-\frac{2}{3}\epsilon}{[K:\QQ](1+\log2)}.
\end{equation}
On the other hand, we have the following well-known estimate for the Lambert $W$-function
\[
	W(x)\geq \log(x)-\log\log(x),\text{ for $x$ sufficiently large}.
\]
For $s$ sufficiently large, this gives the inequality
\begin{equation}\label{eq_lem_Lambert2}
	\ell(s)\geq \left(1-\frac{\epsilon}{3}\right)\frac{\log s}{2\log p}.
\end{equation}
Now the claim follows by combining \eqref{eq_lem_Lambert1} and \eqref{eq_lem_Lambert2}
\begin{align*}
		\frac{\tau_p(\ell(s),s)}{\tau_\infty(\ell(s),s)}&\geq \frac{s\log (p) \ell(s)}{\tau_\infty(\ell(s),s)} \geq \frac{\log s\left(1-\frac{1}{3}\epsilon\right) \left(1-\frac{2}{3}\epsilon\right)}{2[K:\QQ](1+\log2)} \geq \frac{(1-\epsilon)\log s }{2[K:\QQ](1+\log2)}.
\end{align*}
\end{proof}

Let us assume that $s$ is sufficiently large such that the inequality \eqref{eq_ls} holds. Furthermore, we may assume, without loss of generality, that $p-1$ divides $s$. Define $l:=\ell(s)$, where $\ell$ is the function appearing in \Cref{lem_Lambert}. From now on, $Q$, $D$ and $R_n$ refer to the quantities defined in \eqref{def_QD} and \eqref{eq_def_Rn} with the specific choice $l=\ell(s)$.

In \eqref{cor_L_linforms}, we have shown the formula
\[
(s-1)!\dn{n}^{s-1}\sum_{\substack{1\leq j\leq D \\ \gcd(j,p)=1}} \chi(j)\int_{\Zp} R_n(t+\frac{j}{D}) \mathrm{d}t=\Lambda_n\left(1,L_p(2,\chi\omega^{-1}),\dots,L_p(s+1,\chi\omega^{s})\right).
\]
\Cref{cor_integrality} shows the integrality of these linear forms, i.e.
\[
		\Lambda_n(X_0,\dots,X_s)\in \Ocal_{K} X_0+\dots+ \Ocal_{K} X_s.
\]
The Archimedean norm of these linear forms is controlled by \Cref{cor_growth}:
\[
		H_K(\Lambda_n)\leq \exp\left( n\left[\tau_\infty(l,s)+o(1)\right] \right) \text{ as } s\rightarrow \infty.
\]
For the subsequence $(\sigma(n))_{n\geq 1}$ with $\sigma(n):=p^{l+r}n-1$, we have proven in \Cref{cor_padic} the estimate
\[
	\left| \Lambda_{\sigma(n)}\left(1,(L_p(i,\chi\omega^{1-i}))_{i=2}^{s+1}\right) \right|_p = \exp\left( -\sigma(n)\left(\tau_p(l(s),s)+o(1) \right) \right).
\]
Applying Nesterenko's $p$-adic linear independence criterion (\Cref{thm_nesterenko}) to the sequence of linear forms $(\Lambda_{\sigma(n)})_n$, gives the estimate
\[
	\dim_{K}\left( K+\sum_{i=2}^{s+1}L_p(i,\chi\omega^{1-i})K  \right)\geq \frac{\tau_p(\ell(s),s)}{\tau_\infty(\ell(s),s)}\geq (1-\epsilon)\frac{\log s}{2[K:\QQ](1+\log(2))}.
\]

\section{\texorpdfstring{The proof for $p$-adic Hurwitz zeta values}{The proof for p-adic Hurwitz zeta values}}
The aim of this section is to indicate the proof of \Cref{thm_Hurwitz}, i.e. for given $x\in \QQ$ with $|x|_p>1$, $\epsilon>0$ and a sufficiently large integer $s$, we want to prove the inequality
\[
	\dim_{K}\left( K+ \zeta_p(2,x)K + \dots + \zeta_p(s,x)K  \right)\geq \frac{(1-\epsilon)\log s}{2[K:\QQ](1+\log(2))}.
\]
The proof of this theorem follows the same lines as the proof of \Cref{thm_main}. Let us briefly outline the argument. The Teichm\"uller character $\omega$ takes values in the field $\QQ(\mu_{(p-1)})$ obtained by adjoining all $(p-1)$st roots of unity to $\QQ$. We may assume $\QQ(\mu_{(p-1)})\subseteq K$. Furthermore, the formula \cite[Thm. 11.2.9. (3)]{cohen2}
\[
	\zeta_p(i,x+1)-\zeta_p(i,x)=-\frac{\langle x \rangle^{1-i}}{x}=-\frac{\omega(x)^{i-1}}{x^i}\in K.
\]
allows us to assume $0<x \leq 1$, without loss of generality.
In particular, let us write $x=\frac{j_0}{d}$ with $\gcd(j_0,p)=1$ and $1\leq j_0\leq d$. Let us write
\[
d=d'p^{l_0}\text{ with } \gcd(d',p)=1.
\]
 By the assumption $|x|_p>1$, we have $l_0\geq 1$. Let us set
\[
	l:=\ell(s)=\left\lfloor \frac{W(\frac{2s\cdot \epsilon}{3d'p^{2}})}{2\log p} \right\rfloor,
\]
where $W$ denotes the Lambert $W$-function, see Section \ref{sec_2}. For $s$ sufficiently large, we may assume $l\geq \max(2,l_0)$. Let us define $D:=p^ld'$, $Q:=p^{l+1}$ and
\[
	R_n(t):=n!^{s}\binom{N(n)}{\underline{n}}^Q\binom{N(n)+Dt}{N(n)}^Q\frac{1}{(t)^s_{n+1}}
\]
with $N(n):=p^l(p^{\lfloor \log_p(d'n) \rfloor+1}-1)$. Note that this function is a particular case of the rational function $R_n(t)$, defined in \Cref{main_notation}, if we set $\delta=-2$ and $r=0$. Applying \Cref{prop_linearforms} gives for each $\tilde{x}\in\Qp$ with $|\tilde{x}|_p\geq p$ (resp. $|\tilde{x}|_p\geq 4$ if $p=2$) the formula
	\begin{equation}\label{eq_Hurwitz_integration}
		\int_{\Zp} R_n(t+\tilde{x}) \mathrm{d}t=\rho^{(n)}_{0,\tilde{x}}+\sum_{i=1}^s \rho_i^{(n)}\omega(\tilde{x})^{-i}\zeta_p\left(i+1,\tilde{x}\right).
	\end{equation}
Together with \eqref{eq_Hurwitz_integration}, we get linear forms in $p$-adic Hurwitz zeta values:
\[
	\sum_{0\leq j<p^{l-l_0}}\int_{\Zp}R_n(t+\frac{j_0}{D}+\frac{j}{p^{l-l_0}})\mathrm{d}t=\left(\sum_{0\leq j<p^{l-l_0}}\rho_{0,(j_0+dj)/D}^{(n)}\right)+\sum_{i=1}^{s}\rho_{i}^{(n)}\omega(j_0)^{-i}D^i \zeta_p(i+1,x).
\]
Let us use this formula to define our linear forms $\tilde{\Lambda}_n(X_0,\dots,X_s)$ in $p$-adic Hurwitz zeta values:
\begin{align*}
	\tilde{\lambda}_0:&=(s-1)!\dn{n}^{s-1}\sum_{0\leq j<p^{l-l_0}}\rho_{0,(j_0+dj)/D}^{(n)}\\
	\tilde{\lambda}_i:&=(s-1)!\dn{n}^{s-1} \rho_i^{(n)}\omega(j_0)^{-i}D^{i}\quad 1\leq i\leq s.
\end{align*}
Applying \Cref{prop_arith}, with $\delta=-2$ and $r=0$, shows the integrality of the linear forms $(\tilde{\Lambda}_n)_n$, i.e. we have
\[
	\tilde{\Lambda}_n(X_0,\dots,X_s)\in X_0\Ocal_K+\dots +X_s \Ocal_K.
\]
\Cref{prop_growth}, with $\delta=-2$ and $r=0$, controls the asymptotic growth of the coefficients of $\tilde{\Lambda}_n$ and we obtain
\[
	H_K(\tilde{\Lambda}_n)\leq \exp\left( n\left[ \tau_\infty(\ell(s),s)+o(1) \right] \right)\text{ as }n\rightarrow\infty
\]
with
\[
	\tau_\infty(\ell(s),s)=[K:\QQ]\left( s+s\log 2 +d'p^{2\ell(s)+2}\log p \right).
\]
Let us introduce the function $\sigma(n)\colon \ZZ_{\geq 1}\rightarrow\ZZ_{\geq 1}, n\mapsto p^ln-1$.
\begin{prop}
Let $s$ be divisible by $p-1$. Then
\[
	|\tilde{\Lambda}_{\sigma(n)}(1,\zeta_p(2,x),\dots,\zeta_p(s+1,x))|_p= \exp\left( -\sigma(n)(  \tau_p(\ell(s),s)+o(1))  \right),
\]
as $n\rightarrow\infty$, with
\[
	\tau_p(\ell(s),s)=s\log p\left(\ell(s)+\frac{1}{p-1}\right).
\]
\end{prop}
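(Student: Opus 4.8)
The plan is to follow the proof of \Cref{prop_padicEstimate} and \Cref{cor_padic} essentially verbatim, specialised to the present parameters $\delta=-2$, $r=0$, with the one genuine difference that here the $p^{l-l_0}$ translates $\tfrac{j_0+dj}{D}$ are summed with \emph{uniform} weights rather than with weights $\chi(j)$. First I would make the linear form explicit: combining the displayed identity above with the formulas for $\tilde{\lambda}_0,\dots,\tilde{\lambda}_s$ gives
\[
	\tilde{\Lambda}_n\bigl(1,\zeta_p(2,x),\dots,\zeta_p(s+1,x)\bigr)=(s-1)!\,\dn{n}^{\,s-1}\sum_{0\le j<p^{l-l_0}}\int_{\Zp}R_n\!\left(t+\tfrac{j_0+dj}{D}\right)\mathrm{d}t .
\]
Writing $j':=j_0+dj$ and using $D\bigl(t+\tfrac{j'}{D}+i\bigr)=D(t+i)+j'$ to rewrite the rising factorial, one gets $R_n\bigl(t+\tfrac{j'}{D}\bigr)=n!^{\,s}\binom{N(n)}{\underline n}^{Q}D^{s(n+1)}f_{j'}(t)$, with $f_{j'}$ exactly the function of \Cref{prop_intbinom} for $2+\delta=0$. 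Here is the only place where $|x|_p>1$ really enters: $l_0\ge 1$ forces $j'\equiv j_0\pmod p$, hence $\gcd(j',p)=1$, which is what makes \Cref{prop_intbinom} applicable to each $f_{j'}$.

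Next I would evaluate at the index $\sigma(n)=p^{l}n-1$, so that $p^{l}=p^{l+r}$ divides $\sigma(n)+1$, and apply \Cref{prop_intbinom} with $\delta=-2$, $r=0$ to every term. Since $j'^{\,2+\delta}=1$, it gives, with $m:=\lfloor\log_p(\sigma(n)d')\rfloor+1$,
\[
	\int_{\Zp}f_{j'}(t)\,\mathrm{d}t\equiv p^{-m}\pmod{p^{-m+l}}\qquad\text{for all }j .
\]
Summing over the $p^{l-l_0}$ values of $j$ yields $\sum_j\int_{\Zp}f_{j'}(t)\,\mathrm{d}t\equiv p^{\,l-l_0}p^{-m}\pmod{p^{-m+l}}$, and because $l_0\ge 1$ forces $l-l_0-m<-m+l$, the $p$-adic valuation of this sum is \emph{exactly} $l-l_0-m$. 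This step takes the place of the identification of $\sum_j\chi(j)j^{2+\delta}$ with a Bernoulli number in \Cref{prop_padicEstimate}; the role of that Bernoulli number is now played by the harmless constant $p^{l-l_0}$.

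It then remains to collect $p$-adic valuations exactly as in \Cref{cor_padic}: $\nu_p\bigl((s-1)!\,\dn{\sigma(n)}^{\,s-1}\bigr)=o(\sigma(n))$; Legendre's formula gives $s\,\nu_p(\sigma(n)!)=\tfrac{s}{p-1}\sigma(n)+o(\sigma(n))$; the estimate on $\nu_p\binom{N(\sigma(n))}{\underline{\sigma(n)}}$ used in \Cref{cor_padic} (with $Q$ and $D$ fixed) gives $Q\,\nu_p\binom{N(\sigma(n))}{\underline{\sigma(n)}}=o(\sigma(n))$; $\nu_p\bigl(D^{s(\sigma(n)+1)}\bigr)=sl\,\sigma(n)+O(1)$; and $-m+l-l_0=o(\sigma(n))$. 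Adding these up,
\[
	\nu_p\Bigl(\tilde{\Lambda}_{\sigma(n)}\bigl(1,\zeta_p(2,x),\dots,\zeta_p(s+1,x)\bigr)\Bigr)=\sigma(n)\Bigl(sl+\tfrac{s}{p-1}+o(1)\Bigr),
\]
so that, passing to $p$-adic absolute values and substituting $l=\ell(s)$, we obtain the claim with $\tau_p(\ell(s),s)=s\log p\bigl(\ell(s)+\tfrac1{p-1}\bigr)$. I do not expect a serious obstacle: beyond a transcription of \Cref{prop_intbinom} and \Cref{cor_padic} with $\delta=-2$, $r=0$, the only new input is the elementary bookkeeping around the sum over $j$ and the observation $\gcd(j',p)=1$, and the step requiring most care is simply checking that every auxiliary term is indeed $o(\sigma(n))$ so that only $s\,\nu_p(\sigma(n)!)$ and $sl\,\sigma(n)$ survive.
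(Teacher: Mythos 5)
Your proof is correct and follows essentially the same route as the paper: factor out $n!^{s}\binom{N(n)}{\underline n}^{Q}D^{s(n+1)}$ to reduce to the functions $f_{j_0+jd}$, apply \Cref{prop_intbinom} with $\delta=-2$, $r=0$ to get $\int_{\Zp}f_{j_0+jd}(t)\,\mathrm{d}t\equiv p^{-m}\pmod{p^{-m+l}}$, sum to obtain exact valuation $l-l_0-m$, and then collect valuations as in \Cref{cor_padic}. You even supply a detail the paper leaves implicit, namely that $l_0\geq 1$ forces $\gcd(j_0+dj,p)=1$ so that \Cref{prop_intbinom} applies.
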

\begin{proof}
By definition of $R_n(t)$, we have for each integer $n$ the equality
\begin{align*}
	\tilde{\Lambda}_n(1,\zeta_p(2,x),\dots,\zeta_p(s+1,x))&=(s-1)!\dn{n}^{s-1}n!^{s}D^{s(n+1)}\binom{N(n)}{\underline{n}}^Q\cdot \sum_{0\leq j<p^{l-l_0}}\int_{\Zp}f_{j_0+jd}(t) \mathrm{d}t
\end{align*}
with
\[
	f_{j_0+jd}(t)=\binom{N(n)+Dt+j_0+jd}{N(n)}^{Q}\frac{1}{\prod_{i=0}^{n}(D(t+i)+j_0+jd)^s}.
\]
If $n$ is an integer satisfying $p^r|n+1$, we deduce from \Cref{prop_intbinom} the congruence
\[
	\int_{\Zp}f_{j_0+jd}(t) \mathrm{d}t\equiv p^{-m}\mod p^{-m+l}
\]
for $m:=\lfloor \log_p(nd') \rfloor+1$. This gives
\[
	\sum_{0\leq j<p^{l-l_0}}\int_{\Zp}f_{j_0+jd}(t) \mathrm{d}t\equiv p^{-m+l-l_0}\mod p^{-m+l}
\]
and the desired estimate follows along the same lines as in the proof of \Cref{cor_padic}.
\end{proof}
Applying the $p$-adic Nesterenko criterion and observing \Cref{lem_Lambert}, gives:
\[
	\dim_K\left( K+ \zeta_p(2,x)K+\dots+\zeta_p(s+1,x)K\right)\geq \frac{\tau_p(\ell(s),s)}{\tau_\infty(\ell(s),s)}\geq \frac{(1-\epsilon)\log s}{[K:\QQ](2+2\log(2))}.
\]

\section{Proof of the linear independence criterion}\label{ch_linearindependence}

The aim of this section is to provide a proof of the $p$-adic linear independence criterion, formulated in \Cref{thm_nesterenko}. The main difference to Nesterenko's criterion, in \cite{nesterenko_p}, is the missing renormalization of the $p$-adic norm of the linear form. The present criterion is proven in the case $K=\QQ$ by Chantanasiri, in \cite{chantanasiri}, see also \cite[II, \S 6]{colmez}. There, it is indicated that the proof works for arbitrary number fields. For the convenience of the reader, we include a proof for an arbitrary number field $K$.\par
Let us recall that the height of a linear form
	\[
		L(\underline{X})=l_{0}X_0+l_{1}X_1+\dots+l_{r}X_r,\quad l_{i}\in K
	\]
	with coefficients in a number field $K$ was defined as follows
	\[
		H_K(L):=\max_{0\leq i\leq r} |N_K(l_{i})|.
	\]
	In the following, we identify $L$ with the $1\times (r+1)$-matrix $(l_0,...,l_r)$, and we extend the definition of the height to more general matrices. Let $M=(m_{i,j})_{\substack{ 0\leq i\leq s \\ 0\leq j\leq r }}\in M_{(s+1)\times(r+1)}(K)$ be a $(s+1)\times(r+1)$-matrix with  $0\leq s\leq r$. We define the \emph{height of $M$} by
\[
	H_K(M):=\max_{J}|N_K(\det M_J)|,
\]
where $J$ runs through all subsets $J\subseteq \{0,\dots,r\}$ of cardinality $s+1$ and $M_J$ denotes the matrix $M_J=(m_{i,j})_{0\leq i\leq s, j\in J}$.
The following lemma is deduced immediately from the definitions of $H_K$ and $\Delta$, cf. \cite[Lemme 1.4]{chantanasiri}:
\begin{lem}\label{lem_chan_1}
The above defined height satisfies the following properties:
\begin{enumerate}
\item \label{lem_chan_1_i}  If $s=0$ and $M=(m_0,\dots,m_r)$, then
\[
	H_K(M)=\max_{0\leq i\leq r} |N_K(m_i)|.
\]
\item \label{lem_chan_1_ii}  If $s=r$, i.e. $M\in M_{(r+1)\times(r+1)}(K)$, then
\[
	H_K(M)=|N_K(\det(M))|.
\]
\item \label{lem_chan_1_iii}  If $s\leq r-1$ and $M\in M_{(s+1)\times(r+1)}(K)$, $L\in M_{1\times (r+1)}(K)$, let us denote by $M\oplus L$ the matrix obtained by appending $L$ to $M$ as the last row, then
\[
	H_K(M\oplus L)\leq (s+2)H_K(M)H_K(L).
\]
\end{enumerate}
\end{lem}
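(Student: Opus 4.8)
The plan is to unwind the definition $H_K(M)=\max_J|N_K(\det M_J)|$ in each of the three cases; parts \ref{lem_chan_1_i} and \ref{lem_chan_1_ii} are purely formal, and all the content sits in part \ref{lem_chan_1_iii}. For \ref{lem_chan_1_i}, when $s=0$ the subsets $J\subseteq\{0,\dots,r\}$ of cardinality $s+1=1$ are the singletons $\{i\}$, and the associated $1\times 1$ submatrix is $M_{\{i\}}=(m_i)$ with determinant $m_i$; hence $H_K(M)=\max_i|N_K(m_i)|$. For \ref{lem_chan_1_ii}, when $s=r$ the only subset of $\{0,\dots,r\}$ of cardinality $r+1$ is the whole index set, so $M_J=M$ and $H_K(M)=|N_K(\det M)|$. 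Both follow immediately from the definitions.

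For part \ref{lem_chan_1_iii} I would fix a subset $J\subseteq\{0,\dots,r\}$ with $|J|=s+2$ and consider the $(s+2)\times(s+2)$ matrix $(M\oplus L)_J$, whose last row is $(l_j)_{j\in J}$. Laplace expansion along this last row gives
\[
	\det (M\oplus L)_J=\sum_{j\in J}\pm\, l_j\,\det M_{J\setminus\{j\}},
\]
a sum of $s+2$ terms, each the product of a coordinate of $L$ with a maximal $(s+1)\times(s+1)$ minor of $M$; since $|J\setminus\{j\}|=s+1$ we have $|N_K(\det M_{J\setminus\{j\}})|\le H_K(M)$ for every $j$, and of course $|N_K(l_j)|\le H_K(L)$. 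It then remains to bound the norm of this sum of $s+2$ products. Because the field norm is multiplicative but not additive, I would work place by place: applying an embedding $\sigma\colon K\hookrightarrow\CC$ entrywise,
\[
	|\sigma(\det (M\oplus L)_J)|\le\sum_{j\in J}|\sigma(l_j)|\,|\sigma(\det M_{J\setminus\{j\}})|\le (s+2)\,\Big(\max_{j\in J}|\sigma(l_j)|\Big)\Big(\max_{j\in J}|\sigma(\det M_{J\setminus\{j\}})|\Big),
\]
and then take the product over all archimedean places and recombine with the definitions of $N_K$, $H_K(M)$ and $H_K(L)$ to reach $H_K(M\oplus L)\le (s+2)\,H_K(M)\,H_K(L)$.

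The delicate step, and the one I expect to be the real obstacle, is exactly this last recombination: the index $j$ achieving $\max_j|\sigma(l_j)|$ (respectively the maximizing minor) may depend on $\sigma$, so $\prod_\sigma\max_j|\sigma(l_j)|$ is not literally $\max_j|N_K(l_j)|$. Handling this cleanly is what makes it natural to read $H_K$ as a product of local contributions over all places — for integral coefficients this agrees, via the product formula, with the expression above — which is precisely the bookkeeping carried out in Chantanasiri's \cite[Lemme 1.4]{chantanasiri}; in the worst case one only loses a constant depending on $s$ and $[K:\QQ]$, which is harmless since in \Cref{thm_nesterenko} only the exponential growth rate of $H_K(\Lambda_n)$ matters.
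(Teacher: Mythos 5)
Parts \ref{lem_chan_1_i} and \ref{lem_chan_1_ii} are exactly the immediate unwindings the paper has in mind (it gives no proof at all, deferring to \cite{chantanasiri}), and your Laplace expansion is the standard opening for part \ref{lem_chan_1_iii}. So up to the final recombination your route coincides with the intended one.

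However, the obstacle you flag at the end is not a bookkeeping nuisance costing a bounded constant --- it is fatal for the statement with $H_K$ as literally defined in the paper. The gap between $\prod_\sigma\max_j|\sigma(l_j)|$ and $\max_j|N_K(l_j)|$ is unbounded: take $K=\QQ(\sqrt{2})$, $s=0$, $r=1$, $M=\bigl((1+\sqrt{2})^N,(1-\sqrt{2})^N\bigr)$ and $L=\bigl((1-\sqrt{2})^N,(1+\sqrt{2})^N\bigr)$. All four entries are units, so $H_K(M)=H_K(L)=1$, yet
\[
	\det(M\oplus L)=(1+\sqrt{2})^{2N}-(1-\sqrt{2})^{2N}=2b_N\sqrt{2},\qquad b_N\in\ZZ,\ b_N\to\infty,
\]
so $H_K(M\oplus L)=|N_K(\det(M\oplus L))|=8b_N^2\to\infty$, while $(s+2)H_K(M)H_K(L)=2$. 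Hence no proof of \ref{lem_chan_1_iii} can exist for the height $\max_J|N_K(\det M_J)|$; your proposed fix (``one only loses a constant depending on $s$ and $[K:\QQ]$'') is false. The statement that is actually true --- and the one Chantanasiri proves --- replaces $H_K$ by the product of local maxima $\prod_{\sigma}\max_J|\sigma(\det M_J)|$ over the archimedean embeddings; with that definition your place-by-place estimate closes immediately and yields the constant $(s+2)^{[K:\QQ]}$, which is harmless for \Cref{thm_nesterenko} since only the exponential growth rate of the heights matters, and \Cref{lem_chan_2}\ref{lem_chan_2_ii} together with the product formula still holds for this height because all entries are algebraic integers. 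In short: you located the genuine difficulty correctly, but the resolution is to change the definition of the height, not to absorb the discrepancy into a constant.
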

Let us now assume that we have fixed an embedding $K\subseteq \mathbb{C}_p$ and let us define
\[
	H_p(M):=\max_{J}|\det M_J|_p,
\]
where $J$ runs through all subsets $J\subseteq \{0,\dots,r\}$ of cardinality $s+1$.
For $\underline\xi=(\xi_0,\dots\xi_r)\in \Cp^{r+1}$ and $J'\subseteq\{ 0,\dots,r\}$ with $|J'|=s$, let us write $M_{J',\underline\xi}\in M_{(s+1)\times(s+1)}(K)$ for the matrix obtained by appending the column vector $M\underline\xi$ to the matrix $M_{J'}$. Here, we view $\underline{\xi}$ as a column vector and write $M\underline\xi$ for the image of $\underline{\xi}$ under $M$. Let us define
\[
	\Delta_p(M):=\max_{J'}|\det M_{J',\underline\xi}|_p,
\]
where $J'\subset \{0,\dots,r\}$ runs through the subsets of $\{0,\dots,r\}$ with $s$ elements. We observe the following elementary properties of $H_p$ and $\Delta_p$:
\begin{lem}[{cf. \cite[Lemme 1.5]{chantanasiri}}]\label{lem_chan_2}
Let $M\in M_{(s+1)\times(r+1)}(\Ocal_K)$ with $0\leq s\leq r$.
\begin{enumerate}
\item \label{lem_chan_2_i} If $s=r$, then
\[
	H_p(M)=|\det(M)|_p \text{ and } \Delta_p(M)=|\det M|_p\cdot \max_{0\leq i\leq r}|\xi_i|_p.
\]
\item \label{lem_chan_2_ii}  If $\Delta_p(M)\neq 0$, then $H_K(M)\neq 0$ and $H_p(M)\geq \frac{1}{H_K(M)}$.
\item \label{lem_chan_2_iii}  If $s\leq r-1$ and $L\in M_{1\times (r+1)}(K)$, then
\[
	H_p(M\oplus L)\leq (s+2)H_p(M)H_p(L).
\]
If furthermore $H_p(M)\Delta_p(L)>H_p(L)\Delta_p(M)$, then
\[
	\Delta_p(M\oplus L)=H_p(M)\Delta_p(L).
\]
\end{enumerate}
\end{lem}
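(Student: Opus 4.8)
The plan is to dispatch all three assertions by elementary multilinear algebra, invoking the product formula for $K$ only in part~(b). For part~(a): when $s=r$ the only subset of $\{0,\dots,r\}$ of size $s+1$ is the whole set, so $H_p(M)=|\det M|_p$ with no work. For $\Delta_p(M)$ the relevant subsets $J'$ have size $r$, hence are of the form $\{0,\dots,r\}\setminus\{i\}$; I would expand the last column $M\underline\xi=\sum_k\xi_kM^{(k)}$ of $M_{J',\underline\xi}$ (with $M^{(k)}$ the $k$-th column of $M$) by multilinearity, note that every summand with $k\ne i$ repeats a column of $M_{J'}$ and so vanishes, and conclude $\det M_{J',\underline\xi}=\pm\xi_i\det M$. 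Taking $p$-adic absolute values and maximising over $i$ then gives $\Delta_p(M)=|\det M|_p\cdot\max_i|\xi_i|_p$.

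For part~(b): if $\Delta_p(M)\neq 0$, fix $J'$ with $\det M_{J',\underline\xi}\neq 0$. The $s+1$ columns of this square matrix are linearly independent over $\Cp$, so $M\underline\xi$ together with the columns of $M$ indexed by $J'$ span $\Cp^{s+1}$; hence $M$ has full row rank, some $(s+1)\times(s+1)$ minor $\det M_J$ is nonzero, and $H_K(M)\neq 0$. The inequality $H_p(M)\geq 1/H_K(M)$ then reduces, applying it to each nonzero minor $\alpha=\det M_J\in\Ocal_K$ and maximising, to the elementary estimate $|\alpha|_p\,|N_{K/\QQ}(\alpha)|\geq 1$ for nonzero $\alpha\in\Ocal_K$. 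Writing $\mathfrak p$ for the place of $K$ above $p$ cut out by the embedding $K\hookrightarrow\Cp$, with ramification index $e$ and residue degree $f$: integrality of $\alpha$ gives $|N_{K/\QQ}(\alpha)|=N(\alpha\Ocal_K)\geq N(\mathfrak p)^{v_{\mathfrak p}(\alpha)}=p^{f\,v_{\mathfrak p}(\alpha)}$, while $|\alpha|_p^{-1}=p^{v_{\mathfrak p}(\alpha)/e}$, and $ef\geq 1$ together with $v_{\mathfrak p}(\alpha)\geq 0$ finishes it.

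For part~(c): everything rests on one cofactor expansion, for $J'\subseteq\{0,\dots,r\}$ of size $s+1$, along the last row of $(M\oplus L)_{J',\underline\xi}$ (the one coming from $L$),
\[
	\det(M\oplus L)_{J',\underline\xi}=\pm(L\underline\xi)\,\det M_{J'}+\sum_{j\in J'}\pm\,l_j\,\det M_{J'\setminus\{j\},\underline\xi}.
\]
Combining $|\det M_{J'}|_p\leq H_p(M)$, $|\det M_{J'\setminus\{j\},\underline\xi}|_p\leq\Delta_p(M)$, $|l_j|_p\leq H_p(L)$ and $|L\underline\xi|_p=\Delta_p(L)$ with the ultrametric inequality gives $|\det(M\oplus L)_{J',\underline\xi}|_p\leq\max\bigl(\Delta_p(L)H_p(M),\,H_p(L)\Delta_p(M)\bigr)$; the same reasoning applied to $\det(M\oplus L)_K$ with $|K|=s+2$ even yields $H_p(M\oplus L)\leq H_p(M)H_p(L)$, a fortiori the stated bound with the harmless factor $s+2$. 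Under $H_p(M)\Delta_p(L)>H_p(L)\Delta_p(M)$ the displayed maximum equals $\Delta_p(L)H_p(M)$, so $\Delta_p(M\oplus L)\leq H_p(M)\Delta_p(L)$; for the reverse inequality I would choose $J'$ realising $|\det M_{J'}|_p=H_p(M)$, so that in the expansion the term $(L\underline\xi)\det M_{J'}$ has absolute value exactly $\Delta_p(L)H_p(M)$, strictly larger than that of every other term, whence the equality case of the ultrametric inequality forces $|\det(M\oplus L)_{J',\underline\xi}|_p=\Delta_p(L)H_p(M)$ and therefore $\Delta_p(M\oplus L)\geq H_p(M)\Delta_p(L)$.

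No step is genuinely hard; this is the argument of \cite[Lemme 1.5]{chantanasiri} carried out over a general number field. The only two points that demand care are the correct normalisation of $|\cdot|_p$ against $|N_{K/\QQ}(\cdot)|$ in part~(b), and keeping track of the sizes of the index sets across the passage from $M$ to $M\oplus L$, where the integer playing the role of ``$s$'' in the definitions of $H_p$ and $\Delta_p$ goes up by one.
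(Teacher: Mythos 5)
Your proof is correct. The paper itself gives no proof of this lemma (it is stated as an ``elementary observation'' with a citation to Chantanasiri's Lemme 1.5, which treats $K=\QQ$), and your argument --- multilinearity in the appended column for (a), cofactor expansion along the row coming from $L$ plus the ultrametric inequality and its equality case for (c) --- is exactly the intended one. The only point that genuinely goes beyond the $K=\QQ$ case is the estimate $|\alpha|_p\,|N_{K/\QQ}(\alpha)|\geq 1$ for nonzero $\alpha\in\Ocal_K$ in (b), and your verification via $|N_{K/\QQ}(\alpha)|\geq p^{f\,v_{\mathfrak p}(\alpha)}$ and $|\alpha|_p^{-1}=p^{v_{\mathfrak p}(\alpha)/e}$ is correct.
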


\begin{thm}[{\cite[Thm. 1.7]{chantanasiri}}]\label{thm_chantanasiri}
 Let $\underline\xi=(\xi_0,\dots,\xi_r)\in\Cp^{r+1}$ and let $(A_n)_{n\geq n_0}$, $(B_n)_{n\geq n_0}$, $(Q_n)_{n\geq n_0}$ be sequences of real positive numbers. Assume that the sequence $(B_nQ_{n-1})_{n>n_0}$ is increasing, and that $(A_n)_{n\geq n_0}$ and $(B_nQ_{n-1})_{n\geq n_0}$ tend to infinity. For a sequence of linear forms
 \[
 	L_n(\underline X)=l_{0,n} X_0+\dots+l_{r,n}X_r,\quad l_{j,n}\in\Ocal_K
 \]
 with
 \[
 	H_K(L_n)\leq Q_n,\quad 0<|L_n(\underline{\xi})|_p\leq \frac{1}{A_n} \quad\text{and}\quad \frac{|L_{n-1}(\underline{\xi})|_p}{|L_n(\underline{\xi})|_p}\leq B_n,
 \]
 there exists a positive constant $c>0$ such that for all sufficiently large integers $n$: $$c \cdot A_n<(r+1)!(B_nQ_{n-1})^{r}Q_n.$$
\end{thm}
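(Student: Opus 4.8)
The statement is a version of Nesterenko's linear independence criterion, and the plan is to run the classical argument while keeping track of the three quantities $H_K$, $H_p$, $\Delta_p$ through Lemmas~\ref{lem_chan_1} and~\ref{lem_chan_2}.

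\emph{Reduction to the non-degenerate case.} First I would induct on $r$. If all the forms $L_n$ ($n\ge n_0$) lie in a common proper $K$-subspace of $K^{r+1}$, then after a $K$-linear change of coordinates (which alters $\underline\xi$, the constant $\max_i|\xi_i|_p$, and the $L_n$ only by bounded factors and inflates the heights $H_K(L_n)$ only by a constant) the forms involve just $r'<r$ of the variables, and the criterion in dimension $r'$ yields the stronger bound $c'A_n<(r'+1)!(B_nQ_{n-1})^{r'}Q_n\le (r+1)!(B_nQ_{n-1})^rQ_n$ for $n$ large, using $B_nQ_{n-1}\to\infty$. The base case $r=0$ is the product formula: $L_n=(l_{0,n})$ with $l_{0,n}\in\Ocal_K\setminus\{0\}$ forces $|N_K(l_{0,n})|\ge 1$, hence $\prod_{v\nmid\infty}|l_{0,n}|_v=|N_K(l_{0,n})|^{-1}$ and in particular $|l_{0,n}|_p\ge Q_n^{-1}$, which combined with $|l_{0,n}|_p\,|\xi_0|_p\le A_n^{-1}$ gives $cA_n<Q_n$. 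So from now on I assume the $L_n$ span $K^{r+1}$.

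\emph{Construction of an auxiliary square matrix.} For each large $n$ I would build, greedily, a nonsingular matrix $M=M_n\in M_{(r+1)\times(r+1)}(\Ocal_K)$ with rows among $\{L_m\}$. Start from $L_n$; given a rank-$k$ matrix $M_k$ with rows $L_{m_1},\dots,L_{m_k}$, append as a new last row the form $L_m$ of largest index $m$ below those already used that is $K$-linearly independent of the rows of $M_k$ and satisfies the domination inequality $H_p(M_k)\Delta_p(L_m)>H_p(L_m)\Delta_p(M_k)$ of Lemma~\ref{lem_chan_2}\ref{lem_chan_2_iii}; this gives $\Delta_p(M_{k+1})=H_p(M_k)\Delta_p(L_m)$, while $H_p(M_{k+1})\le (k+1)H_p(M_k)H_p(L_m)\le (k+1)H_p(M_k)$ and $H_K(M_{k+1})\le (k+1)H_K(M_k)H_K(L_m)\le (k+1)H_K(M_k)Q_m$ by Lemmas~\ref{lem_chan_1}\ref{lem_chan_1_iii} and~\ref{lem_chan_2}\ref{lem_chan_2_iii} (using $H_p(L_m)\le 1$ since $L_m$ has coefficients in $\Ocal_K$). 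When domination fails for every admissible candidate, the offending form has $\Delta_p(L_m)/H_p(L_m)$ no larger than that of $M_k$, so one re-bases the construction at that form; the ratio hypothesis $|L_{n-1}(\underline\xi)|_p\le B_n|L_n(\underline\xi)|_p$ together with the monotonicity of $(B_nQ_{n-1})$ is exactly what confines the indices $m_i$ to a window of $O(r)$ consecutive integers $\le n$, so that among the $r+1$ rows one is $L_n$ (contributing $Q_n$) and each of the remaining $r$ rows is some $L_m$ with $m<n$ whose combined contribution of height and accumulated $B$-factor is $\le B_nQ_{n-1}$.

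\emph{Conclusion and main obstacle.} Once $M=M_n$ is square and nonsingular, Lemma~\ref{lem_chan_2}\ref{lem_chan_2_i} gives $H_p(M)=|\det M|_p$ and $\Delta_p(M)=|\det M|_p\max_i|\xi_i|_p\neq 0$, and Lemma~\ref{lem_chan_2}\ref{lem_chan_2_ii} gives $|\det M|_p\ge 1/H_K(M)$. Unwinding the telescoping product for $\Delta_p(M_n)$ assembled above — a bounded power of the $H_p(M_k)$, all $\le (r+1)!$, times $|L_n(\underline\xi)|_p$ after absorbing $O(r)$ ratio factors $B_j\le B_n$ — and feeding in $|L_n(\underline\xi)|_p\le 1/A_n$, $H_K(M_n)\le (r+1)!\,Q_n(B_nQ_{n-1})^r$, and $A_n\to\infty$ to swallow lower-order terms, yields $cA_n<(r+1)!(B_nQ_{n-1})^rQ_n$ for all large $n$, with $c>0$ depending only on $r$ and $\underline\xi$. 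The delicate point throughout is the index-and-height bookkeeping in the middle step: one must guarantee that the greedy procedure truly reaches rank $r+1$ using forms of indices inside a bounded window below $n$ for which Lemma~\ref{lem_chan_2}\ref{lem_chan_2_iii} applies, and must show that whenever domination fails the failure itself produces a pivot with strictly smaller $\Delta_p/H_p$, so that the process terminates while still delivering the bound. Everything else is a routine chase of the inequalities furnished by Lemmas~\ref{lem_chan_1} and~\ref{lem_chan_2}.
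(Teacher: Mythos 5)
Your overall skeleton is the right one---greedily assemble a square matrix out of the $L_m$, track $H_K$, $H_p$, $\Delta_p$ via \Cref{lem_chan_1} and \Cref{lem_chan_2}, and finish with the product formula and the monotonicity of $(B_nQ_{n-1})$---but two of the steps you lean on do not work as stated, and they are precisely the steps you flag as delicate. First, the claim that the ratio hypothesis ``confines the indices $m_i$ to a window of $O(r)$ consecutive integers $\leq n$'' is false, and fortunately also unnecessary. In the actual argument the selected indices $n=k_0>k_1>\dots>k_r\geq n_0$ can be spread out arbitrarily. What saves the bookkeeping is local: $k_{i+1}$ is chosen \emph{maximal} among the indices $k\leq k_i$ where the domination inequality $H_p(M_n^{(i)})\Delta_p(L_k)>H_p(L_k)\Delta_p(M_n^{(i)})$ holds, so domination fails at $k_{i+1}+1$, and comparing $L_{k_{i+1}}$ with $L_{k_{i+1}+1}$ via the hypothesis $|L_{k_{i+1}}(\underline\xi)|_p\leq B_{k_{i+1}+1}|L_{k_{i+1}+1}(\underline\xi)|_p$ yields $\Delta_p(M_n^{(i+1)})\leq B_{k_{i+1}+1}\Delta_p(M_n^{(i)})$. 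Each step therefore contributes a factor $B_{k_{i+1}+1}Q_{k_{i+1}}$, which is $\leq B_nQ_{n-1}$ by monotonicity \emph{wherever} $k_{i+1}$ sits; no windowing is needed or available.

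Second, your treatment of the case where the greedy step has no admissible candidate is not a proof. You propose either an induction on $r$ after a coordinate change (whose effect on all three heights you do not control) or a ``re-basing'' at a form with smaller $\Delta_p/H_p$, with no argument that this terminates or preserves the estimates. The correct resolution is much more direct: the candidate set is $S=\{k: n_0\leq k\leq k_i,\ \Delta_p(M_n^{(i)})H_p(L_k)<|L_k(\underline\xi)|_p H_p(M_n^{(i)})\}$, so the construction can only get stuck when domination already fails at $k=n_0$, i.e.\ when $\Delta_p(M_n^{(i)})H_p(L_{n_0})\geq |L_{n_0}(\underline\xi)|_p H_p(M_n^{(i)})$. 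That inequality, combined with $H_p(M_n^{(i)})\geq H_K(M_n^{(i)})^{-1}$ from \Cref{lem_chan_2} and the bound $\Delta_p(M_n^{(i)})\leq B_{k_1+1}\cdots B_{k_i+1}A_n^{-1}$ accumulated so far, \emph{immediately} gives $A_n\ll (i+1)!\,Q_n(B_nQ_{n-1})^i\leq (r+1)!\,Q_n(B_nQ_{n-1})^rQ_n$ for large $n$ (using $B_nQ_{n-1}\to\infty$ to pass from exponent $i$ to $r$); no recursion is needed. Since $n_0$ is a fixed index, the quantity $|L_{n_0}(\underline\xi)|_p/H_p(L_{n_0})$ is an admissible constant. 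With these two repairs your outline closes up into the paper's proof; the concluding computation you describe (determinant lower bound via the norm, $H_K(M_n^{(r)})\leq (r+1)!\,Q_{k_0}\cdots Q_{k_r}$, and collapsing $\prod_j B_{k_j+1}Q_{k_j}\leq (B_nQ_{n-1})^r$) is correct as written, with $c=\max_j|\xi_j|_p$.
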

\begin{proof}
Let us sketch the proof following \cite[Thm. 1.7]{chantanasiri}. By abuse of notation, let us denote the $1\times(r+1)$-matrix $(l_{0,n},\dots,l_{r,n})$ associated to the linear form $L_n$ again by $L_n$. For a sufficiently large positive number $n$ and $i \leq r$, let us inductively construct a sequence
\[
	n=k_0>k_1>\dots>k_i\geq n_0,
\]
such that the matrix
\[
	M_n^{(i)}:=L_{k_0}\oplus \dots \oplus L_{k_i}
\]
satisfies
\begin{equation}\label{thm_chan_eq1}
	0<\Delta_p(M_n^{(i)})\leq B_{k_1+1}\cdot \dots \cdot B_{k_i+1} \frac{1}{A_n}.
\end{equation}
For $i=0$, let us set $k_0=n$. We have by the hypothesis of \Cref{thm_chantanasiri} the inequality $\Delta_p(M_n^{(0)})=|L_n(\underline\xi)|_p\leq \frac{1}{A_n}$. For $0\leq i\leq r-1$, suppose that $k_0,\dots,k_i$ have already been chosen. According to \Cref{lem_chan_1} \ref{lem_chan_1_iii} and the hypothesis, we have
\begin{equation}\label{thm_chan_eq3}
	H_K(M_n^{(i)})\leq (i+1)!Q_{k_0}\cdot\dots\cdot Q_{k_i}.
\end{equation}
Let us first assume that
\begin{equation}\label{thm_chan_eq2}
	\Delta_p(M_n^{(i)})H_p(L_{n_0})\geq |L_{n_0}(\underline\xi)|_p H_p(M_n^{(i)}).
\end{equation}
In this case, we can skip the inductive construction and argue directly as follows: From \eqref{thm_chan_eq1} and \eqref{thm_chan_eq2} we get
\[
	A_n\leq B_{k_1+1}\cdot \dots\cdot B_{k_i+1}\frac{H_p(L_{n_0})}{|L_{n_0}(\underline{\xi})|_p H_p(M_n^{(i)})}.
\]
Now, we can make use of \eqref{thm_chan_eq3} and \Cref{lem_chan_2}  \ref{lem_chan_2_ii}
\[
	\frac{1}{H_p(M_n^{(i)})}\leq H_K(M_n^{(i)})\leq (i+1)!Q_{k_0}\cdot \dots \cdot Q_{k_i}
\]
and deduce
\[
	A_n\leq (i+1)!B_{k_1+1}\cdot \dots\cdot B_{k_i+1}Q_{k_0}\cdot \dots \cdot Q_{k_i} \frac{H_p(L_{n_0})}{|L_{n_0}(\underline{\xi})|_p }.
\]
By the hypothesis of the theorem, the sequence $B_nQ_{n-1}$ is increasing and in this case we conclude
\[
	\frac{|L_{n_0}(\underline{\xi})|_p }{H_p(L_{n_0})}A_n\leq (i+1)!Q_n(B_nQ_{n-1})^r,
\]
as desired. Let us resume our inductive construction of $k_{i+1}$ under the assumption that \eqref{thm_chan_eq2} does not hold. In this case, the set
\[
	S=\left\{ k\in \ZZ \mid n_0\leq k\leq k_i \text{ and } \Delta_p(M_n^{(i)})H_p(L_{k})< |L_{k}(\underline\xi)|_p H_p(M_n^{(i)}) \right\}
\]
is not empty, since it contains $n_0$. Let us set $k_{i+1}:=\max( S)$ and $M_n^{(i+1)}=M_n^{(i)}\oplus L_{k_{i+1}}$. \Cref{lem_chan_2}\ref{lem_chan_2_iii} gives
\[
	\Delta_p(M_n^{(i+1)})=H_p(M_n^{(i)})|L_{k_{i+1}}(\underline{\xi})|_p.
\]
In particular, $\Delta_p(M_n^{(i+1)})\neq 0$ which implies $k_{i+1}<k_i$. By the choice of $k_{i+1}$, we get the inequality
\[
	|L_{k_{i+1}+1}(\underline{\xi})|_p H_p(M_n^{(i)})\leq \Delta_p(M_n^{(i)})H_p(L_{k_{i+1}+1}).
\]
Combining the last two inequalities and observing $H_p(L_{k_{i+1}+1})\leq 1$, gives
\[
	\Delta_p(M_n^{(i+1)})\leq \Delta_p(M_n^{(i)})H_p(L_{k_{i+1}+1})\frac{|L_{k_{i+1}}(\underline{\xi})|_p}{|L_{k_{i+1}+1}(\underline{\xi})|_p}\leq \Delta_p(M_n^{(i)})B_{k_{i+1}+1}.
\]
This implies that condition \eqref{thm_chan_eq1} also holds for $i+1$ and finishes the inductive construction.\par
Thus we may assume that we have constructed
\[
	n=k_0>k_1>\dots>k_r\geq n_0
\]
such that
\begin{equation*}
	0<\Delta_p(M_n^{(r)})\leq B_{k_1+1}\cdot \dots \cdot B_{k_r+1} \frac{1}{A_n}.
\end{equation*}
According to \Cref{lem_chan_2}\ref{lem_chan_2_i}, we have $\Delta_p(M_n^{(r)})=|\det M_n^{(r)}|_p(\max_{i}|\xi_i|_p)$. Since $\det M_n^{(r)}\in K^\times$, we get by the product-formula for normed places of number fields
\[
	 |\det M_n^{(r)}|_p\geq \frac{1}{|N_K(\det M_n^{(r)})|}.
\]
Now, we make use of \Cref{lem_chan_1}\ref{lem_chan_1_ii} and \eqref{thm_chan_eq3} and get
\[
	|N_K(\det M_n^{(r)})|=H_K(M_n^{(r)})\leq (r+1)!Q_{k_0}\cdot \dots\cdot Q_{k_r},
\]
and then
\[
	\max_{j}|\xi_j|_p A_n\leq (r+1)!Q_n(B_{k_1+1}Q_{k_1})\cdot \dots\cdot (B_{k_r+1}Q_{k_r} ).
\]
Since $(B_{k+1}Q_k)_{k\geq n_0}$ is increasing, we get
\[
	\max_{j}|\xi_j|_p A_n\leq (r+1)!Q_n(B_nQ_{n-1})^r
\]
as desired.
\end{proof}

Let us finally deduce the linear independence criterion, stated in the introduction, from \Cref{thm_chantanasiri}:

\begin{proof}[Proof of \Cref{thm_nesterenko}]Let $\underline{\theta}=(\theta_1,\dots,\theta_s)\in\Cp^s$ and $(\Lambda_n)_n$ a sequence of linear forms satisfying
	\[
		H_K(\Lambda_n)\leq e^{\sigma(n)(\tau+o(1))} \quad\text{ and } e^{-(\tau_1+o(1))\sigma(n)}\leq |\Lambda_n(1,\underline{\theta})|_p\leq e^{-(\tau_2-o(1))\sigma(n)},
	\]
	as in the statement of \Cref{thm_nesterenko}. Our aim is to show the estimate
	\[
	\dim_K(K+K\theta_1+\dots+K\theta_s)\geq \frac{\tau_2}{\tau+\tau_1-\tau_2}.
	\]
	Let $\{\xi_0,\dots,\xi_r\}$ with $\xi_0=1$ be a $K$-basis of the $K$-vector space spanned by $\theta_0:=1,\theta_1,\dots,\theta_s$. There is an integer $M\in \ZZ$ and algebraic integers $c_{i,0},\dots,c_{i,r}\in\Ocal_K$ such that
	\[
		M\theta_i=\sum_{j=0}^r c_{i,j}\xi_j \text{ for all } 1\leq i\leq s.
	\]
	Let us define the linear form
	\[
		L_n(Y_0,\dots,Y_r):=\Lambda_n(\sum_{j=0}^r c_{0,j}Y_j,\dots,\sum_{j=0}^r c_{s,j}Y_j).
	\]
	Let us apply \Cref{thm_chantanasiri} with $\tau'>\tau$, $\tau_1'>\tau_1$ and $\tau_2'<\tau_2$ to $\underline{\xi}$, $L_n$ with the choice
	\[
		Q_n=Ce^{\sigma(n)\tau'}, A_n=e^{\tau_2'\sigma(n)}\quad \text{ and }B_n=e^{\tau'_1\sigma(n)-\tau_2'\sigma(n)}
	\]
	where $C>0$ is a suitable constant: There is a constant $c>0$ such that for all sufficiently large $n$:
	\[
		c e^{\tau_2'\sigma(n)}\leq  (r+1)!C^{r+1}\cdot (e^{\tau_1'\sigma(n)-\tau_2'\sigma(n)+\tau'\sigma(n-1)})^r e^{\tau'\sigma(n)}.
	\]
	By the hypothesis, we have $\lim_{n\rightarrow\infty}\sigma(n)=\infty$ and $\lim_{n\rightarrow\infty}\frac{\sigma(n)}{\sigma(n+1)}=1$ and deduce 
	\[
	\tau_1'\leq (r+1)(\tau'_1-\tau'_2+\tau').
	\]
	Since $\tau'>\tau$, $\tau_1'>\tau_1$ and $\tau_2'<\tau_2$ are arbitrary, we obtain the desired estimate:
	\[
	\dim_K(K+\sum_{i=1}^s \theta_i K)=r+1\geq \frac{\tau_1}{\tau+\tau_1-\tau_2}.
	\]
\end{proof}

\bibliographystyle{amsalpha}
\bibliography{padicZeta}
\end{document}